\documentclass[12pt]{article}
\usepackage{amsmath,amsthm, amssymb, amsfonts,accents,nccmath}
\usepackage{enumitem}
\usepackage{array}
\usepackage{lipsum}
\usepackage{mathtools}
\usepackage[toc,page]{appendix}
\usepackage[english]{babel}
\usepackage{mathrsfs}
\usepackage{dsfont}
\usepackage{booktabs}
\usepackage[linesnumbered,commentsnumbered,ruled,vlined]{algorithm2e}
\usepackage{tikz}
\usepackage{tikz-network}
\usepackage[utf8]{inputenc}
\usepackage[english]{babel}

\DeclareMathOperator{\diag}{diag}
\usepackage{caption}
\usepackage{subcaption}

\newcommand\restr[2]{{% we make the whole thing an ordinary symbol
  \left.\kern-\nulldelimiterspace % automatically resize the bar with \right
  #1 % the function
  \vphantom{\big|} % pretend it's a little taller at normal size
  \right|_{#2} % this is the delimiter
  }}

\usepackage{hyperref}
\hypersetup{
    colorlinks=true,
    linkcolor=blue,
    filecolor=darkmagenta,      
    urlcolor=cyan,
    citecolor=blue
} 
\urlstyle{same}

\makeatletter
\def\BState{\State\hskip-\ALG@thistlm}
\makeatother
\numberwithin{equation}{section}

\fboxrule0.0001pt \fboxsep0pt

\newtheorem{theorem}{Theorem}[section]

\newtheorem{lem}[theorem]{Lemma}

\textheight=23.5cm \textwidth=16.5cm \oddsidemargin=0.0in
\evensidemargin=-0.5in \topmargin=-0.6in

\title{ On the maximum $A_{\alpha}$-spectral radius of unicyclic and bicyclic graphs with fixed girth or fixed number of pendant vertices}
\author{
	Joyentanuj Das\thanks{Department of Applied Mathematics, National Sun Yat-sen University, Kaohsiung City - 804, Taiwan (R.O.C.). Email: joyentanuj@gmail.com}
	\and
	Iswar Mahato\thanks{Department of Mathematics, Indian Institute of Technology Bombay, Mumbai 400076, India, \newline Email: iswarmahato02@gmail.com, iswar@math.iitb.ac.in}
}
\date{}

\begin{document}

\maketitle

\begin{abstract}
For a connected graph $G$, let $A(G)$ be the adjacency matrix of $G$ and  $D(G)$ be the diagonal matrix of the degrees of the vertices in $G$. The $A_{\alpha}$-matrix of $G$ is defined as 
\begin{align*}
A_\alpha (G) = \alpha D(G) + (1-\alpha) A(G) \quad \text{for any $\alpha \in [0,1]$}.     
\end{align*}
 The largest eigenvalue of $A_{\alpha}(G)$ is called the $A_{\alpha}$-spectral radius of $G$. In this article, we characterize the graphs with maximum $A_{\alpha}$-spectral radius among the class of unicyclic and bicyclic graphs of order $n$ with fixed girth $g$. Also, we identify the unique graphs with maximum $A_{\alpha}$-spectral radius among the class of unicyclic and bicyclic graphs of order $n$ with $k$ pendant vertices.
\end{abstract}

\noindent {\sc\textbf{Keywords}:}  Unicyclic graph, Bicyclic graph, Girth, Pendant vertex, $A_{\alpha}$-spectral radius.

\noindent {\sc\textbf{AMS Subject Classification (2020):}} 05C50, 05C35, 15A18.

\section{Introduction}
 Throughout the article, we only consider the finite, simple and connected graphs. Let $G=(V(G),E(G))$ be a graph with vertex set $V(G)=\{v_1,v_2,\hdots,v_n\}$ and edge set $E(G)=\{e_1,e_2,\hdots,e_m\}$. The cardinality of $V(G)$ and $E(G)$ are the \textit{order} and the \textit{size} of $G$, respectively. If $u,v\in V(G)$ are adjacent, then we write $u\sim v$, otherwise $u\nsim v$. The adjacent vertices are called \textit{neighbours} and the set of neighbours of a vertex $v$ in $G$ is denoted by $N_G(v)$. The \textit{degree} of a vertex $v$, denoted by $d_G(v)$ (or simply $d(v)$), is the number of adjacent vertices of $v$ in $G$. A vertex of degree $1$ is called the \textit{pendant vertex} of $G$. A \textit{pendant edge} is an edge incident to a pendant vertex. A graph $H$ is said to be a \textit{subgraph} of $G$ if $V(H) \subseteq V(G)$ and $E(H) \subseteq E(G)$. The \textit{distance} $d(u,v)$ between two vertices $u,v\in V(G)$ is the length of a shortest path connecting $u$ and $v$ in $G$.
 
 The \textit{adjacency matrix} of a graph $G$ on $n$ vertices, denoted by $A(G)$, is the $n \times n$ matrix with its rows and columns are indexed by the vertex set of $G$ and $A(G)_{ij}=1$, if the vertices $v_i$ and $v_j$ are adjacent and $A(G)_{ij}=0$ otherwise. Let $D(G)=\diag(d_1,d_2,\hdots,d_n)$ be the degree matrix of $G$, where $d_i$ is the degree of the vertex $v_i\in V(G)$. The \textit{signless Laplacian matrix} of $G$ is defined as $Q(G)=D(G)+A(G)$. In \cite{Niki0}, Nikiforov defined the matrix $A_\alpha(G)$ as 
\begin{align*}
A_\alpha (G) = \alpha D(G) + (1-\alpha) A(G) \quad \text{for any $\alpha \in [0,1]$}.     
\end{align*}

Note that if $\alpha = 0$, then $A_\alpha (G) = A(G)$ and if $\alpha = \frac12 $, then $A_\alpha (G) = \frac12 Q(G)$. Thus, $A_\alpha (G)$ generalizes both the adjacency matrix and the signless Laplacian matrix of $G$. The largest eigenvalue of $A_\alpha (G)$ is called the $A_{\alpha}$-spectral radius of $G$, which is denoted by $\rho_\alpha(G)$. For a connected graph $G$, $A_{\alpha}(G)$ is always irreducible and hence by the Perron-Frobenius theorem the multiplicity of $\rho_\alpha(G)$ is one and there exists a unique positive (entry-wise) unit eigenvector corresponding to $\rho_\alpha(G)$, which is called the \textit{Perron vector} of $A_{\alpha}(G)$.

Spectral extremal problems are one of the interesting and well-studied problems in spectral graph theory. For the adjacency matrix and the signless Laplacian matrix of a graph, the extremal problems have been extensively studied. In \cite{Guo}, Guo determined the graphs with maximum spectral radius among all unicyclic and bicyclic graphs with $n$ vertices and $k$ pendant vertices. Zhai et al. \cite{Zhai2009} identified the unique graphs with maximum spectral radius among all bicyclic graphs on $n$ vertices with fixed girth. Liu et al. \cite{Liu} characterized the graphs with the largest signless Laplacian spectral radius among all unicyclic and bicyclic graphs with $n$ vertices and $k$ pendant vertices. Li et al. \cite{Li2011signless} determined the unique graphs with maximum signless Laplacian spectral radius among all unicyclic and bicyclic graphs on $n$ vertices with fixed girth. For more details about the extremal problems of the spectral radius and the signless Laplacian spectral radius of graphs, see \cite{Geng,Li2013,Li2011tricyclic,Liu2012signless,Wu} and the references therein.

In recent years, the study of extremal problems for the $A_{\alpha}$-spectral radius of graphs have received significant importance and attracted the attention of researchers. In~\cite{Xue}, Xue et al. determined the unique graphs with maximum $A_{\alpha}$-spectral radius among all connected graphs with fixed diameter; Lin et al.~\cite{Lin} characterized the extremal trees with given order and matching number having the maximum $A_{\alpha}$-spectral radius; Li et al.~\cite{Li} characterized the extremal graphs with maximum $A_{\alpha}$-spectral radius among all trees and unicyclic graphs with given degree sequence; Rojo \cite{Rojo} characterized the extremal trees with maximum $A_{\alpha}$-spectral radius among all trees of order $n$ with $k$ pendant vertices; Li and Sun \cite{Sun} determined the extremal graphs with  maximum $A_{\alpha}$-spectral radius among the class of bipartite graphs with fixed order and size. In \cite{Sun2}, the authors characterized the extremal graphs with given order and size having the maximum $A_{\alpha}$-spectral radius.  Wen et al.~\cite{Wen} studied the extremal problems for $A_{\alpha}$-spectral radius of bicyclic graphs with prescribed degree sequence and determined the extremal graphs. Wang et al. \cite{Shaohui} characterized the cacti of fixed order and fixed number of pendant vertices having the maximum $A_{\alpha}$-spectral radius. Very recently, Wang et al.~\cite{Wang} characterized the extremal unicyclic and bicyclic graphs with fixed diameter, whose $A_{\alpha}$-spectral radii are maximum. Chen et al. \cite{Chen} determined the graphs with maximum $A_{\alpha}$-spectral radius among  the graphs of given size and girth. For more advances on the extremal problems of the $A_{\alpha}$-spectral radius of graphs, see the articles \cite{Chang2023,Chen2022,Feng2022,Lou2023,Yuan} and the references therein. Motivated by these works, in this article, we study the maximization problems for the $A_{\alpha}$-spectral radius of unicyclic and bicylic graphs with fixed girth or fixed number pendant vertices and determine the maximal graphs. 

The article is organized as follows: In section $2$, we define the useful notations and collect some preliminary results. In section $3$, we characterize the unicyclic graphs with maximum $A_{\alpha}$-spectral radius among the class of unicyclic graphs on $n$ vertices with girth $g$ or $k$ pendant vertices. In section $4$, we determine the bicyclic graphs for which the $A_{\alpha}$-spectral radius is maximum among all bicyclic graphs of order $n$ with girth $g$ or $k$ pendant vertices.

%%%%%%%%%%%%%%%%%%%%%%%%%%%%%%%%%%%%%%%%%%%%%%%%%%%%%%%%%%%%%%%%%

\section{Notations and Preliminaries}
In this section, we define the useful notations and collect some known results for the $A_{\alpha}$-spectral radius of graphs, which will be used to prove our results in the subsequent sections.

For integers $n$ and $g$, let $\mathcal{U}_n(g)$ and $\mathcal{B}_n(g)$ denote the class of all connected unicyclic and bicyclic graphs of order $n$ with girth $g$, respectively. Let $\mathscr{U}_n(k)$ and $\mathscr{B}_n(k)$ be the set of all connected unicyclic and bicyclic graphs of order $n$ with $k$ pendant vertices, respectively. For a real number $x$, let $\lceil x \rceil$ denote the least integer greater than or equal to $x$ and $\lfloor x \rfloor$ denote the greatest integer less than or equal to $x$.

Let $C_n$ and $P_n$ denote the cycle and the path on $n$ vertices, respectively. For a graph $G$, let $G-u$ or $G-uv$ denote the graph obtained from $G$ by deleting the vertex $u\in V(G)$ or the edge $uv\in E(G)$. Similarly, $G+uv$ denote the graph obtained from $G$ by adding the edge $uv \notin E(G)$, where $u,v\in V(G)$. A path $P_{k+1}=v_0v_1v_2\hdots v_k$ in $G$ is called a pendant path from a vertex $v_0$ if $d(v_1)=\hdots =d(v_{k-1})=2$ and $d(v_k)=1$. A path $P_{k+1}=v_0v_1v_2\hdots v_k$ is said to be an internal path if $d(v_0) \ge 3$, $d(v_k) \ge 3$ and $d(v_1) = \hdots = d(v_{k-1}) = 2$. $k$ paths $P_{l_1+1},P_{l_2+1},\hdots,P_{l_k+1}$ are said to have almost equal lengths if $|l_i-l_j|\leq 1$ for $1\leq i,j\leq k$. Now, we collect some preliminary results for the $A_{\alpha}$-spectral radius of graphs.

\begin{lem}[{\cite[Proposition 14]{Niki0}}]\label{lem:subgraph}
Let $\alpha \in [0,1)$. If $H$ is a proper subgraph of a graph $G$, then $\rho_\alpha(H)<\rho_\alpha(G)$.
\end{lem}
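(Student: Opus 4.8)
The plan is to derive this from the strict monotonicity of the largest eigenvalue of an irreducible nonnegative symmetric matrix under a nontrivial entrywise perturbation. Since the paper only treats connected graphs, $G$ is connected, so $A_\alpha(G)$ is symmetric, nonnegative, and irreducible; and since $\alpha\in[0,1)$ we have $1-\alpha>0$, which is what makes the perturbation visible.

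First I would reduce to the case of a spanning subgraph. If $V(H)\subsetneq V(G)$, replace $H$ by $H':=(V(G),E(H))$, the graph on all of $V(G)$ that keeps exactly the edges of $H$; then $A_\alpha(H')$ is block diagonal with blocks $A_\alpha(H)$ and a zero matrix, so $\rho_\alpha(H')=\rho_\alpha(H)$. Here $E(H')=E(H)\subsetneq E(G)$: if $V(H)=V(G)$ this holds because $H$ is a proper subgraph, and if $V(H)\subsetneq V(G)$ it holds because otherwise every vertex of $V(G)\setminus V(H)$ would be isolated in the connected graph $G$. So it suffices to show $\rho_\alpha(H')<\rho_\alpha(G)$ for a spanning subgraph $H'$ with $E(H')\subsetneq E(G)$. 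Writing $A_\alpha=\alpha D+(1-\alpha)A$ and comparing entrywise, passing from $G$ to $H'$ only decreases degrees and off-diagonal adjacency entries, so $0\le A_\alpha(H')\le A_\alpha(G)$; and for any edge $uv\in E(G)\setminus E(H')$ the $(u,v)$-entry drops from $1-\alpha>0$ to $0$, so $A_\alpha(H')\ne A_\alpha(G)$.

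It then remains to prove the following standard fact: if $B$ is symmetric, nonnegative and irreducible, $0\le A\le B$ entrywise, and $A\ne B$, then $\rho(A)<\rho(B)$. The inequality $\rho(A)\le\rho(B)$ is the usual monotonicity of the spectral radius for nonnegative matrices. For strictness, suppose $\rho(A)=\rho(B)=:\rho$; let $u\ge0$, $u\ne0$, satisfy $Au=\rho u$ (Perron--Frobenius), and apply the Rayleigh quotient for the symmetric matrix $B$:
\begin{align*}
\rho=\rho(B)\ \ge\ \frac{u^{\top}Bu}{u^{\top}u}\ \ge\ \frac{u^{\top}Au}{u^{\top}u}\ =\ \rho.
\end{align*}
Equality throughout forces $u^{\top}Bu=\rho\,u^{\top}u$, so $u$ is a top eigenvector of $B$; by irreducibility $u>0$, and then $u^{\top}(B-A)u=0$ with $u>0$ and $B-A\ge0$ forces $B=A$, a contradiction. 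Taking $B=A_\alpha(G)$ and $A=A_\alpha(H')$ yields $\rho_\alpha(H)=\rho_\alpha(H')<\rho_\alpha(G)$.

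The only step that is not routine bookkeeping is the last one, the strict increase of the Perron root under a nontrivial nonnegative perturbation of an irreducible matrix — and even that is a classical consequence of the Perron--Frobenius theorem, so it could alternatively just be cited; everything else is the reduction to spanning subgraphs and a short inspection of the entries of $A_\alpha$.
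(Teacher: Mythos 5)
Your proof is correct. Note that the paper does not prove this statement at all: it is quoted verbatim from Nikiforov (Proposition 14 of the cited reference), so there is no in-paper argument to compare against. Your write-up is the standard and expected one: reduce to a spanning subgraph (using that isolated vertices contribute only zero eigenvalues and that the largest eigenvalue of $A_\alpha(H)$ is nonnegative), observe $0\le A_\alpha(H')\le A_\alpha(G)$ entrywise with strict inequality in some entry because $1-\alpha>0$, and then get strictness of the Perron root via the Rayleigh quotient evaluated at a nonnegative eigenvector of the smaller matrix, using irreducibility of $A_\alpha(G)$ to upgrade that eigenvector to a positive one. You also correctly identified the one genuinely necessary hypothesis — connectivity of $G$ — which the lemma's bare statement omits but the paper's standing convention (all graphs are connected) supplies; without it the strict inequality can fail, e.g.\ when $H$ is obtained from a disconnected $G$ by deleting an edge in a component not realizing the spectral radius. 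The only cosmetic remark is that the step ``equality in the Rayleigh quotient forces $u$ to be a top eigenvector of $B$'' deserves the one-line justification that for a symmetric matrix the Rayleigh quotient attains its maximum exactly on the top eigenspace; with that said, the argument is complete.
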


\begin{lem}[{\cite[Corollary 19]{Niki0}}]\label{lem:spec}
If $\alpha \in [0,1)$ and $G$ is a graph on $n$ vertices and $m$ edges, then $$\rho_\alpha(G) \ge \frac{2m}{n}.$$
The equality holds if and only if $G$ is a regular graph.
\end{lem}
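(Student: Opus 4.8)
The plan is to obtain the lower bound by feeding the all-ones vector into the variational (Rayleigh--Ritz) characterization of the largest eigenvalue of the symmetric matrix $A_\alpha(G)$, and then to settle the equality case by examining when that vector is an eigenvector and invoking the Perron--Frobenius theorem.

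For the inequality, I would first note that $A_\alpha(G)=\alpha D(G)+(1-\alpha)A(G)$ is real symmetric, so $\rho_\alpha(G)=\max_{x\neq 0}\dfrac{x^{\top}A_\alpha(G)x}{x^{\top}x}$. Taking $x=\mathbf{1}$, the all-ones vector, one has $\mathbf{1}^{\top}\mathbf{1}=n$, while by the handshake identity $\mathbf{1}^{\top}D(G)\mathbf{1}=\sum_{i=1}^{n}d_i=2m$ and $\mathbf{1}^{\top}A(G)\mathbf{1}=\sum_{i,j}A(G)_{ij}=2m$; hence $\mathbf{1}^{\top}A_\alpha(G)\mathbf{1}=\alpha(2m)+(1-\alpha)(2m)=2m$, and the Rayleigh quotient yields $\rho_\alpha(G)\geq \frac{2m}{n}$.

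For the equality case, equality above forces $\mathbf{1}$ to lie in the eigenspace of $A_\alpha(G)$ for the eigenvalue $\rho_\alpha(G)$. Since the $i$-th entry of $A_\alpha(G)\mathbf{1}$ is $\alpha d_i+(1-\alpha)d_i=d_i$, we have $A_\alpha(G)\mathbf{1}=(d_1,\dots,d_n)^{\top}$, which is a scalar multiple of $\mathbf{1}$ exactly when all degrees coincide, i.e.\ when $G$ is regular. Conversely, if $G$ is $r$-regular then $A_\alpha(G)\mathbf{1}=r\mathbf{1}$; since $\mathbf{1}$ is a positive eigenvector and $A_\alpha(G)$ is irreducible for the connected graph $G$, Perron--Frobenius gives $\rho_\alpha(G)=r$, and $r=\frac{2m}{n}$ by the handshake identity, which closes both directions.

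I do not anticipate a genuine obstacle: the estimate is a one-line application of Rayleigh--Ritz combined with the handshake lemma. The only point requiring a little care is the equality analysis, where one must use Perron--Frobenius to be sure that the eigenvalue attached to the positive eigenvector $\mathbf{1}$ of a regular graph is actually the spectral radius and not some smaller eigenvalue.
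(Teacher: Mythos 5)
This lemma is imported verbatim from Nikiforov (\cite[Corollary~19]{Niki0}); the paper supplies no proof of its own, so there is no in-paper argument to compare against. Your proof is the standard one and is correct: the Rayleigh--Ritz bound with the test vector $\mathbf{1}$, the handshake identity giving $\mathbf{1}^{\top}A_\alpha(G)\mathbf{1}=2m$, and the observation that $A_\alpha(G)\mathbf{1}=(d_1,\dots,d_n)^{\top}$ settle both the inequality and the forward implication of the equality case. One small remark on the converse: you appeal to irreducibility of $A_\alpha(G)$ for ``the connected graph $G$,'' but the lemma as stated only assumes $G$ is a graph on $n$ vertices and $m$ edges. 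For a possibly disconnected $r$-regular graph the cleaner route is to note that the spectral radius of the nonnegative symmetric matrix $A_\alpha(G)$ is at most its maximum row sum, which equals $\alpha r+(1-\alpha)r=r$; combined with your lower bound $\rho_\alpha(G)\ge r$ from the Rayleigh quotient, this gives $\rho_\alpha(G)=r=\frac{2m}{n}$ without invoking Perron--Frobenius at all. Under this paper's standing convention that all graphs are connected, your version is already adequate.
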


\begin{lem}[{\cite[Corollary 13]{Niki0}}]\label{lem:spec-Delta}
 Let $G$ be a connected graph with maximum degree $\Delta$. Then
$$\rho_\alpha(G) \ge
\begin{cases}
	\text{$\alpha(\Delta+1)$} & \quad\text{if $\alpha \in [0,\frac{1}{2}]$,}\\
	\text{$\alpha \Delta + \frac{(1-\alpha)^2}{\alpha}$} & \quad\text{if $\alpha \in [\frac{1}{2},1)$.}
\end{cases}$$
\end{lem}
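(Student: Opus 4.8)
The plan is to reduce the bound to the star $K_{1,\Delta}$ and then analyse $\rho_\alpha(K_{1,\Delta})$ through the quadratic equation it satisfies. Pick a vertex $v\in V(G)$ with $d_G(v)=\Delta$. The vertex $v$, its $\Delta$ neighbours, and the $\Delta$ edges joining $v$ to them form a copy of $K_{1,\Delta}$ inside $G$. If $G=K_{1,\Delta}$ there is nothing to reduce; otherwise $K_{1,\Delta}$ is a proper subgraph of $G$ and Lemma~\ref{lem:subgraph} yields $\rho_\alpha(G)>\rho_\alpha(K_{1,\Delta})$. In either case $\rho_\alpha(G)\ge\rho_\alpha(K_{1,\Delta})$, so it suffices to show that $\rho_\alpha(K_{1,\Delta})\ge\alpha(\Delta+1)$ when $\alpha\in[0,\tfrac12]$ and $\rho_\alpha(K_{1,\Delta})\ge\alpha\Delta+\frac{(1-\alpha)^2}{\alpha}$ when $\alpha\in[\tfrac12,1)$.

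Next I would pin down $\rho_\alpha(K_{1,\Delta})$. Let $x$ be the Perron vector of $A_\alpha(K_{1,\Delta})$, with value $p$ at the centre and values $q_1,\dots,q_\Delta$ at the leaves, and write $\rho:=\rho_\alpha(K_{1,\Delta})$; note $\rho\ge\frac{2\Delta}{\Delta+1}\ge 1>\alpha$ by Lemma~\ref{lem:spec}. The eigenvalue equation at leaf $i$ reads $\rho q_i=\alpha q_i+(1-\alpha)p$, so $q_i=\frac{(1-\alpha)p}{\rho-\alpha}$ for every $i$; in particular all leaf values coincide. Substituting into the equation at the centre, $\rho p=\alpha\Delta p+(1-\alpha)\sum_i q_i$, and cancelling $p>0$ gives $(\rho-\alpha\Delta)(\rho-\alpha)=(1-\alpha)^2\Delta$, that is,
$$f(\rho)=0,\qquad\text{where}\quad f(\lambda):=\lambda^2-\alpha(\Delta+1)\lambda+\Delta(2\alpha-1).$$
Since $f$ is an upward-opening parabola, $\rho$ is its larger root; hence for any real $t$ with $f(t)\le 0$ one has $t\le\rho$. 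This implication is the only tool needed for the last step.

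Finally I would split on the value of $\alpha$. For $\alpha\in[0,\tfrac12]$, take $t=\alpha(\Delta+1)$: the quadratic and linear terms of $f(t)$ cancel and $f(\alpha(\Delta+1))=\Delta(2\alpha-1)\le 0$, so $\rho_\alpha(K_{1,\Delta})\ge\alpha(\Delta+1)$. For $\alpha\in[\tfrac12,1)$, take $t=\alpha\Delta+\frac{(1-\alpha)^2}{\alpha}$ and use $t-\alpha(\Delta+1)=\frac{(1-\alpha)^2-\alpha^2}{\alpha}=\frac{1-2\alpha}{\alpha}$; a short computation gives $f(t)=\big(t-\alpha(\Delta+1)\big)t+\Delta(2\alpha-1)=(1-2\alpha)\frac{(1-\alpha)^2}{\alpha^2}\le 0$, so $\rho_\alpha(K_{1,\Delta})\ge\alpha\Delta+\frac{(1-\alpha)^2}{\alpha}$. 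Combining with $\rho_\alpha(G)\ge\rho_\alpha(K_{1,\Delta})$ completes both cases. I do not expect a genuine obstacle here; the only points needing care are deriving the quadratic $f$ correctly and choosing the two test values $t$, together with the observation that ``$f(t)\le 0\Rightarrow t$ lies between the roots of $f$'' also guarantees that $f$ has real roots in the second case, so no separate discriminant check is required.
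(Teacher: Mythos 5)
The paper does not prove this lemma; it is imported verbatim from Nikiforov \cite[Corollary 13]{Niki0}, so there is no in-paper argument to compare against. Your proof is correct and follows the standard route: restrict to the star $K_{1,\Delta}$ via Lemma~\ref{lem:subgraph}, derive the quadratic $f(\lambda)=\lambda^2-\alpha(\Delta+1)\lambda+\Delta(2\alpha-1)$ satisfied by $\rho:=\rho_\alpha(K_{1,\Delta})$, and check that each candidate lower bound $t$ has $f(t)\le 0$; the two evaluations of $f(t)$ are both right. The one place where the stated reason does not support the conclusion is the assertion that $\rho$ is the \emph{larger} root of $f$ ``since $f$ is an upward-opening parabola'' --- convexity alone does not decide which root $\rho$ is --- but this is immediate from facts already in your argument: $f(\alpha)=-\Delta(1-\alpha)^2<0$, so $\alpha$ lies strictly between the two roots, and since $f(\rho)=0$ with $\rho>\alpha$ (your Lemma~\ref{lem:spec} observation), $\rho$ must be the larger root. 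With that one line added, the proof is complete.
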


\begin{lem}[{\cite[Lemma 2.2]{Xue}}]\label{lem:edge}
Let $\alpha \in [0,1)$ and $G$ be a connected graph with $u,v \in V(G)$ such that $N \subseteq N(v) \setminus (N(u) \cup \{u\})$. Let $G' = G - \{vw : w \in N\} + \{uw : w \in N\}$ and $\mathbf{x}=(x_1,\hdots,x_n)^t$ be the Perron vector of $A_{\alpha}(G)$ corresponding to $\rho_\alpha(G)$. If $N\neq \phi$ and  $x_u \ge x_v$, then $\rho_\alpha(G)<\rho_\alpha(G')$.
\end{lem}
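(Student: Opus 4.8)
The plan is to argue via the Rayleigh quotient. Since $A_{\alpha}(G')$ is real symmetric and $\mathbf{x}$ is a unit vector, $\rho_\alpha(G') \ge \mathbf{x}^t A_{\alpha}(G')\mathbf{x}$, with equality if and only if $\mathbf{x}$ is an eigenvector of $A_{\alpha}(G')$ for its largest eigenvalue. On the other hand, because $\mathbf{x}$ is the Perron vector of $A_{\alpha}(G)$ we have $A_{\alpha}(G)\mathbf{x} = \rho_\alpha(G)\mathbf{x}$ and hence $\mathbf{x}^t A_{\alpha}(G)\mathbf{x} = \rho_\alpha(G)$. So it suffices to control the difference $\mathbf{x}^t\big(A_{\alpha}(G') - A_{\alpha}(G)\big)\mathbf{x}$: first show it is nonnegative, then show that equality in the Rayleigh estimate cannot occur.

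First I would record that the operation is well defined and local: the hypothesis $N \subseteq N(v)\setminus(N(u)\cup\{u\})$ forces $u \ne v$, $u,v \notin N$, and $uw \notin E(G)$ for every $w \in N$, so $G'$ is a simple graph, and passing from $G$ to $G'$ changes only $d(u)$ (increased by $|N|$) and $d(v)$ (decreased by $|N|$), leaving the degrees of all vertices of $N$ unchanged. Writing $A_{\alpha}(G') - A_{\alpha}(G) = \alpha\big(D(G') - D(G)\big) + (1-\alpha)\big(A(G') - A(G)\big)$ and expanding the quadratic form directly then gives
\begin{align*}
\mathbf{x}^t\big(A_{\alpha}(G') - A_{\alpha}(G)\big)\mathbf{x} &= \alpha|N|(x_u^2 - x_v^2) + 2(1-\alpha)(x_u - x_v)\sum_{w\in N} x_w \\
&= (x_u - x_v)\Big(\alpha|N|(x_u + x_v) + 2(1-\alpha)\sum_{w\in N} x_w\Big).
\end{align*}
Because $\mathbf{x}$ is entrywise positive, $\alpha \in [0,1)$, and $N \neq \emptyset$, the bracketed factor is strictly positive; combined with $x_u \ge x_v$ this makes the whole expression nonnegative, so $\rho_\alpha(G') \ge \mathbf{x}^t A_{\alpha}(G')\mathbf{x} \ge \rho_\alpha(G)$.

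It remains to obtain strict inequality. If $x_u > x_v$, the displayed quantity is strictly positive and we are done at once. The only borderline case is $x_u = x_v$, where the difference vanishes, $\mathbf{x}^t A_{\alpha}(G')\mathbf{x} = \rho_\alpha(G)$, and strictness would fail precisely if $\mathbf{x}$ were a top eigenvector of $A_{\alpha}(G')$, i.e. $A_{\alpha}(G')\mathbf{x} = \rho_\alpha(G)\mathbf{x}$. I would rule this out by comparing the $v$-th coordinates of $A_{\alpha}(G')\mathbf{x}$ and $A_{\alpha}(G)\mathbf{x} = \rho_\alpha(G)\mathbf{x}$: their difference equals $-\alpha|N|x_v - (1-\alpha)\sum_{w\in N} x_w$, which is strictly negative by positivity of $\mathbf{x}$ (and $1-\alpha > 0$, $N \ne \emptyset$), contradicting $A_{\alpha}(G')\mathbf{x} = \rho_\alpha(G)\mathbf{x}$. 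Hence $\rho_\alpha(G') > \rho_\alpha(G)$ in every case.

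I do not expect a real obstacle: this is the standard Rayleigh-quotient edge-shifting computation for $A_{\alpha}$. The only point needing mild care is the strict inequality in the degenerate case $x_u = x_v$, which is handled above through the coordinate comparison rather than through Perron–Frobenius uniqueness for $A_{\alpha}(G')$ — an advantage here, since the statement does not (and need not) assert that $G'$ is connected.
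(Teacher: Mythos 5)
Your Rayleigh-quotient argument is correct, including the careful handling of the borderline case $x_u=x_v$ via the $v$-th coordinate of the eigenvalue equation. The paper does not prove this lemma but cites it from Xue et al., and your proof is essentially the standard one given there, so there is nothing to flag.
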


\begin{lem}[{\cite[Lemma 1.1]{Li}}]\label{lem:subdiv}
Let $\alpha \in [0,1)$ and $G$ be a connected graph. If $uv$ is an edge on some internal path of $G$ and $G_{uv}$ is the graph obtained from $G$ by subdivision of the edge $uv$ into the edges $uw$ and $wv$, then $\rho_\alpha(G_{uv}) < \rho_\alpha(G)$.
\end{lem}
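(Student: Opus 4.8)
The statement is the $A_\alpha$-analogue of the classical Hoffman--Smith theorem on subdividing an edge of an internal path, so the plan is to adapt that circle of ideas. Write the internal path as $v_0v_1\cdots v_k$ (so $d(v_0),d(v_k)\ge 3$ and $d(v_i)=2$ for $1\le i\le k-1$) and let $H$ be obtained from $G$ by deleting the interior vertices $v_1,\dots,v_{k-1}$; then $G$ and $G_{uv}$ are both $H$ with a path joining $v_0$ and $v_k$ added, having $k-1$ and $k$ interior vertices respectively. Hence it suffices to show that, with $H$ and the two attachment vertices fixed, inserting one more degree-$2$ vertex on that path strictly decreases $\rho_\alpha$; we may take the new vertex $w$ to lie on $v_0v_1$.

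I would first record that $\rho:=\rho_\alpha(G)>2$: the graph $G$ properly contains either a cycle (when $v_0$ and $v_k$ are joined inside $H$) or a double broom (a path with two extra pendant edges at each end), so $\rho\ge 2$ by Lemmas~\ref{lem:subgraph} and~\ref{lem:spec}, with equality possible only when $G$ is one of the graphs of $A_\alpha$-spectral radius exactly $2$ that carry an internal path, i.e. of type $\widetilde D_n$ --- a case I would treat by hand. Given $\rho>2$, the eigenvalue equations at the degree-$2$ vertices, $(\rho-2\alpha)x_{v_i}=(1-\alpha)(x_{v_{i-1}}+x_{v_{i+1}})$, are a second-order linear recurrence with characteristic roots $\mu>1>\mu^{-1}$, $\mu+\mu^{-1}=(\rho-2\alpha)/(1-\alpha)$, so along the path $x_{v_i}=A\mu^{i}+B\mu^{-i}$; the same Chebyshev-type polynomials $U_j$ (with $U_j=(\lambda-2\alpha)U_{j-1}-(1-\alpha)^2U_{j-2}$, $U_0=1$, $U_{-1}=0$) control the characteristic polynomial of the whole graph.

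There are two natural ways to finish. When $v_0,v_k$ are not joined in $H$ (so $H=H_a\sqcup H_b$, $v_0\in H_a$, $v_k\in H_b$) the cleanest is a transplanted test vector: keep $\mathbf x$ on $H_a$, multiply it by a constant $\sigma>0$ on $H_b$, and on the path $v_0wv_1\cdots v_k$ of $G_{uv}$ put the unique recurrence solution $\mathbf z$ with $z_{v_0}=x_{v_0}$ and $z_{v_k}=\sigma x_{v_k}$. Then $A_\alpha(G_{uv})\mathbf z=\rho\mathbf z$ everywhere except at $v_0$ and $v_k$, where $A_\alpha(G_{uv})\mathbf z-\rho\mathbf z$ equals $(1-\alpha)(z_w-x_{v_1})$ and $(1-\alpha)(z_{v_{k-1}}-\sigma x_{v_{k-1}})$; choosing $\sigma$ to make both ``defects'' nonpositive and pairing $\mathbf z$ with the Perron vector of $G_{uv}$ (Collatz--Wielandt) yields $\rho_\alpha(G_{uv})<\rho$. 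The route that also covers the cyclic case goes through the characteristic polynomial: expansion along the path writes $\phi(\cdot,\lambda)$ as $R_t(\lambda)-2(1-\alpha)^{t+2}c(\lambda)$, where $t$ counts interior path vertices, $R_t$ satisfies the same three-term recurrence as $U_t$ (so $R_t=(1-\alpha)^t(a_1(\lambda)\mu^{t}+a_2(\lambda)\mu^{-t})$) and $c(\lambda)$ records the cycles through $v_0,v_k$; feeding $\phi(G,\rho)=0$ into the recurrence gives an identity $\phi(G_{uv},\rho)=(1-\alpha)\mu^{-1}\phi(G,\rho)+(1-\alpha)^{t+1}\mu^{t-1}(\mu^2-1)\,a_1(\rho)-2(1-\alpha)^{t+3}(1-\mu^{-1})\,c(\rho)$, and since $\phi(G_{uv},\cdot)$ is monic with largest root $\rho_\alpha(G_{uv})$, it suffices to prove the right-hand side is positive.

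The crux, in either route, is the sign verification, and I expect that to absorb essentially all the work. In the test-vector route the defects, after substituting $x_{v_i}=A\mu^i+B\mu^{-i}$, reduce to elementary inequalities in $s_j:=\mu^{j}-\mu^{-j}$ together with the fact that $x_{v_k}/x_{v_0}$ lies strictly between $s_1/s_{k+1}$ and $s_{k+1}/s_1$; that in turn follows from positivity of the Perron entries and the branch-vertex bounds $x_{v_1}\le\frac{\rho-\alpha d(v_0)}{1-\alpha}x_{v_0}$, $x_{v_{k-1}}\le\frac{\rho-\alpha d(v_k)}{1-\alpha}x_{v_k}$ (with a little extra care for the borderline regime in which $A$ or $B$ can be negative). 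In the characteristic-polynomial route one must show $a_1(\rho)>0$ and $c(\rho)\le 0$: the quantity $a_1(\rho)$ is the value at $\lambda=\rho$ of the generating function for $H$ with a semi-infinite pendant path attached at each of $v_0$ and $v_k$, which is positive because $\rho$ exceeds the $A_\alpha$-spectral radius of that configuration (a limit of proper subgraphs of $G$ via Lemma~\ref{lem:subgraph}), and $c(\rho)$ is controlled the same way by the cycle through $v_0,v_k$ having spectral radius $<\rho$. Finally one disposes of the $\widetilde D_n$ configuration, where at $\alpha=0$ the inequality degenerates to an equality.
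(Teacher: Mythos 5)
First, a point of reference: the paper does not prove Lemma~\ref{lem:subdiv} at all --- it is imported verbatim from \cite[Lemma~1.1]{Li} --- so there is no in-paper argument to measure yours against; your proposal has to stand on its own. Its overall shape (reduce to a single subdivision, exploit the three-term recurrence $(\rho-2\alpha)x_{v_i}=(1-\alpha)(x_{v_{i-1}}+x_{v_{i+1}})$ at the degree-$2$ vertices, then either transplant a test vector or compare characteristic polynomials) is indeed the right circle of ideas, and your recurrence and defect formulas are correct.

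Nevertheless there are genuine gaps. (i) As your own last sentence concedes, for $\alpha=0$ and $G=\widetilde D_n$ subdivision leaves the spectral radius equal to $2$, so the strict inequality in the statement as quoted fails there; this cannot be ``treated by hand'' --- a correct write-up must add a hypothesis (e.g.\ $\rho_\alpha(G)>2$, which is all this paper ever uses since every application is to a graph properly containing a cycle, or $\alpha>0$). (ii) Your derivation of $\rho_\alpha(G)>2$ is broken in the tree case: Lemma~\ref{lem:spec} gives $\rho_\alpha(G)\ge 2m/n$, which is strictly less than $2$ for the double broom; you need either $\rho_\alpha(G)\ge\rho_0(G)$ (true, via $\mathbf x^tD\mathbf x\ge \mathbf x^tA\mathbf x$ for the adjacency Perron vector, but not among the quoted lemmas) or a direct computation. (iii) Most importantly, both closing routes stop exactly where the theorem lives. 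In the test-vector route, the existence of a single $\sigma>0$ making \emph{both} defects nonpositive --- equivalently, your claim that $x_{v_k}/x_{v_0}$ lies strictly between $s_1/s_{k+1}$ and $s_{k+1}/s_1$ --- is the entire content of the lemma, and it is asserted (``I expect that to absorb essentially all the work'') rather than proved; the stated branch-vertex bounds alone do not visibly yield it, especially in the regime where $A$ or $B$ is negative. In the characteristic-polynomial route, the displayed identity carries unverified coefficients, and the decisive signs $a_1(\rho)>0$, $c(\rho)\le 0$ are justified only by an appeal to semi-infinite pendant paths as limits of subgraphs, an argument that is itself unestablished for $A_\alpha$. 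Until one of these sign verifications is actually carried out, this is a plausible roadmap rather than a proof.
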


\begin{lem}[{\cite[Theorem 3.2]{Guo2020}}]\label{lem:path}
Let $u,v$ be two vertices of a connected graph $G$ with degree at least $2$. Let $G_{u,v} (k,l)$ be the graph obtained from $G$ by attaching the pendant paths $P_k$ to $u$ and $P_l$ to $v$, respectively. If $k-l \ge 2$ and $uv\in E(G)$, then for any $\alpha \in [0,1)$
$$\rho_\alpha(G_{u,v} (k,l)) < \rho_\alpha(G_{u,v} (k-1,l+1)).$$
\end{lem}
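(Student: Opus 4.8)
The plan is to realise $G_{u,v}(k,l)$ and $G_{u,v}(k-1,l+1)$ as graphs on a common vertex set that differ by moving a single edge, and then to invoke Lemma~\ref{lem:edge}. Write $u=u_0,u_1,\dots,u_k$ for the pendant path attached at $u$ and $v=v_0,v_1,\dots,v_l$ for the one attached at $v$ (when $l=0$ read $v_l=v$). Then
\begin{align*}
G_{u,v}(k-1,l+1)=G_{u,v}(k,l)-u_{k-1}u_k+v_lu_k .
\end{align*}
Set $G_1=G_{u,v}(k,l)$, $\rho=\rho_\alpha(G_1)$, and let $\mathbf{x}$ be the Perron vector of $A_\alpha(G_1)$. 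Apply Lemma~\ref{lem:edge} with $N=\{u_k\}$, with $v_l$ playing the role of ``$u$'' and $u_{k-1}$ playing the role of ``$v$'': the conditions on $N$ are immediate, since $u_k$ is a leaf adjacent to $u_{k-1}$ and (as $k\ge 2$) $u_k\notin N(v_l)\cup\{v_l\}$. Hence the entire lemma reduces to the single inequality $x_{v_l}\ge x_{u_{k-1}}$, and, crucially, Lemma~\ref{lem:edge} yields the \emph{strict} conclusion $\rho_\alpha(G_1)<\rho_\alpha(G_{u,v}(k-1,l+1))$ even when $x_{v_l}=x_{u_{k-1}}$.

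To control $x_{v_l}$ and $x_{u_{k-1}}$ I would use the standard pendant-path recursion. Since $u$ has degree $\ge 3$ in $G_1$ we have $\rho>2\alpha$ by Lemma~\ref{lem:spec-Delta}, so we may set $t=\frac{\rho-2\alpha}{1-\alpha}$, $s=\frac{\rho-\alpha}{1-\alpha}$ and define $q_0=1$, $q_1=s$, $q_j=tq_{j-1}-q_{j-2}$. Writing out the eigenvalue equations of $A_\alpha(G_1)$ at the degree-$2$ vertices and at the two leaves of each pendant path gives $x_{u_{k-j}}=q_jx_{u_k}$ and $x_{v_{l-j}}=q_jx_{v_l}$; in particular $x_{u_{k-1}}=\frac{q_1}{q_k}x_u$ and $x_{v_l}=\frac{1}{q_l}x_v$. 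From $q_j/q_{j-1}=t-q_{j-2}/q_{j-1}$ and the monotonicity of $r\mapsto t-1/r$ one checks that the ratios $r_j:=q_j/q_{j-1}$ are positive and strictly decreasing, so $q_{k-1}/q_k$ is increasing in $k$; equivalently, collapsing each pendant path shows $\rho$ is the Perron root of $A_\alpha(G)$ with the (increasing) scalars $\alpha+(1-\alpha)/r_k$ and $\alpha+(1-\alpha)/r_l$ added to the diagonal entries at $u$ and $v$. In this notation the target $x_{v_l}\ge x_{u_{k-1}}$ becomes $x_v\,q_k\ge x_u\,q_1q_l$, i.e.
\begin{align*}
\frac{x_v}{x_u}\ \ge\ \frac{r_1}{r_{l+1}r_{l+2}\cdots r_k}.
\end{align*}

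This last inequality is the heart of the matter and the step I expect to be the main obstacle: the hypothesis $k-l\ge 2$ is sharp, since for $k-l=2$ one already has equality $x_{v_l}=x_{u_{k-1}}$ (as small examples show), so no lossy estimate can succeed. The plan is to extract the needed lower bound on $x_v/x_u$ from the coupled eigenvalue equations at $u$ and $v$ — coupled precisely because $uv\in E(G_1)$ — which read $(\rho-\alpha d_{G_1}(u)-(1-\alpha)/r_k)x_u=(1-\alpha)(x_v+S_u)$ and $(\rho-\alpha d_{G_1}(v)-(1-\alpha)/r_l)x_v=(1-\alpha)(x_u+S_v)$, with $S_u,S_v\ge 0$ the contributions of the remaining neighbours; feeding in $r_{j+1}=t-1/r_j$ to telescope $r_{l+1}\cdots r_k$ should reduce the claim to a monotonicity statement for the $r_j$ (essentially that the gaps $r_j-r_{j+1}$ decrease while $r_j>1$) together with the index shift furnished by $k-l\ge 2$. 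Concretely I would first settle the extremal cases $l=0$ (where the equation at $v$ is cleanest) and $k-l=2$ (the equality case, which pins down exactly which estimates must stay sharp), and then bootstrap to arbitrary $l$ and $k$ using the monotonicity of $r\mapsto t-1/r$ and of $k\mapsto q_{k-1}/q_k$. Once $x_{v_l}\ge x_{u_{k-1}}$ is in hand, Lemma~\ref{lem:edge} completes the proof.
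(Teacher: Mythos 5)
The paper does not prove this statement at all: it is quoted verbatim as \cite[Theorem 3.2]{Guo2020}, so there is no in-paper proof to compare against; Guo and Zhou establish it by characteristic-polynomial reductions for pendant paths rather than by an eigenvector argument. Judged on its own terms, your reduction step is sound: $G_{u,v}(k-1,l+1)=G_{u,v}(k,l)-u_{k-1}u_k+v_lu_k$, Lemma~\ref{lem:edge} applies with $N=\{u_k\}$, and the whole lemma does come down to the single inequality $x_{v_l}\ge x_{u_{k-1}}$. The pendant-path recursion $q_0=1$, $q_1=s$, $q_j=tq_{j-1}-q_{j-2}$ and the monotonicity of the ratios $r_j=q_j/q_{j-1}$ are also correct (positivity of the $q_j$ follows from positivity of the Perron vector, and $r_{j+1}=t-1/r_j$ then forces the $r_j$ to decrease).

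The gap is that the inequality $x_{v_l}\ge x_{u_{k-1}}$ --- equivalently $x_v/x_u\ge q_1q_l/q_k$ --- is never proved; you explicitly flag it as ``the step I expect to be the main obstacle'' and offer only a plan (``should reduce to\ldots'', ``I would first settle\ldots''). This inequality is the entire mathematical content of the lemma, and it cannot be extracted from the path recursion alone: it couples the ratio $x_v/x_u$, which depends on the degrees and the other neighbours of $u$ and $v$ inside $G$ through the eigen-equations at $u$ and $v$, with the quantity $q_1q_l/q_k$, and crude bounds such as $x_v\ge(1-\alpha)x_u/(\rho-\alpha d(v))$ are not obviously strong enough uniformly in $G$, $k$, $l$ and $\alpha$. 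Your guiding heuristic is also off: it is not true that $k-l=2$ forces equality $x_{v_l}=x_{u_{k-1}}$ (for $G=C_3$ on $u,v,w$, $\alpha=0$, $k=2$, $l=0$ one computes $x_{u_1}=\tfrac{\rho}{\rho+1}x_v<x_v$), so the claimed sharpness that is supposed to guide which estimates must be kept tight does not hold, and the proposed telescoping argument is not shown to close. As it stands this is an outline with the decisive step missing; to complete it you would either have to prove $x_{v_l}\ge x_{u_{k-1}}$ in full generality or switch to the characteristic-polynomial comparison used in \cite{Guo2020}.
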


If a tree $T$ is attached to a vertex $v$ of a graph $G$, then the vertex $v$ is said to be the root of $T$ in $G$.

\begin{lem}[{\cite[Proposition 7]{Niki}}]\label{lem:path-pendant}
Let $\alpha \in [0,1)$ and $G$ be a graph with $\rho_\alpha(G) > 2$. Let $P = v_1v_2 \hdots v_{r+1}$ be a pendant path in $G$ with root $v_1$. Let $\mathbf{x}=(x_1,\hdots,x_n)^t$ be the Perron vector of $A_{\alpha}(G)$ corresponding to $\rho_\alpha(G)$. Then $x_1 > \hdots > x_{r+1}$.
\end{lem}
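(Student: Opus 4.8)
\section*{Proof proposal for Lemma~\ref{lem:path-pendant}}

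The plan is to argue directly from the eigen-equation $A_\alpha(G)\mathbf{x} = \rho\,\mathbf{x}$, where $\rho := \rho_\alpha(G)$ and $\mathbf{x} = (x_1,\dots,x_n)^t$ is the (positive) Perron vector. Componentwise this reads $\rho x_v = \alpha d(v) x_v + (1-\alpha)\sum_{u\sim v} x_u$ for each vertex $v$. By the definition of a pendant path, the vertices $v_2,\dots,v_r$ of $P = v_1v_2\cdots v_{r+1}$ all have degree $2$ and $v_{r+1}$ has degree $1$, so I would first write down the relevant equations: $(\rho-\alpha)x_{r+1} = (1-\alpha)x_r$ at the pendant vertex, and $(\rho-2\alpha)x_i = (1-\alpha)(x_{i-1}+x_{i+1})$ for $2 \le i \le r$. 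All denominators $1-\alpha$ are positive since $\alpha < 1$, and $\rho - 2\alpha > 0$ since $\rho > 2$.

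Next I would run a downward induction establishing $x_{i-1} > x_i$. For the base case, the pendant-vertex equation gives $x_r = \tfrac{\rho-\alpha}{1-\alpha}x_{r+1}$, and $\rho > 2 > 1$ forces $\tfrac{\rho-\alpha}{1-\alpha} > 1$, so $x_r > x_{r+1}$. For the inductive step, rearranging the degree-$2$ equation at $v_i$ yields the key identity
$$x_{i-1} - x_i = \left(\frac{\rho-2\alpha}{1-\alpha} - 1\right)x_i - x_{i+1} = \frac{\rho-\alpha-1}{1-\alpha}\,x_i - x_{i+1}.$$
Here the hypothesis $\rho > 2$ is used again, now in the sharper form $\tfrac{\rho-\alpha-1}{1-\alpha} > 1$; combined with $x_i > 0$ (Perron positivity) and the inductive hypothesis $x_i > x_{i+1}$, this gives $x_{i-1} - x_i > x_i - x_{i+1} > 0$. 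Applying this identity for $i = r, r-1, \dots, 2$ and chaining with the base case produces $x_1 > x_2 > \cdots > x_{r+1}$, as desired.

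I do not expect a genuine obstacle here: the whole argument is a short recurrence estimate. The only points requiring attention are the bookkeeping of degrees along $P$ (the root $v_1$ may have arbitrary degree, but it never enters the recurrence, so nothing is lost) and the observation that the threshold $\rho_\alpha(G) > 2$ is exactly what is needed — merely $\rho > 1$ suffices at the pendant end, whereas the strict inequality $\tfrac{\rho-\alpha-1}{1-\alpha} > 1$ needed to propagate monotonicity through the interior of the path holds precisely because $\rho > 2$.
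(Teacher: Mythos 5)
Your proof is correct: the eigen-equation at the degree-two vertices $v_2,\dots,v_r$ gives exactly the identity $x_{i-1}-x_i=\frac{\rho-\alpha-1}{1-\alpha}\,x_i-x_{i+1}$, the base case at the pendant vertex needs only $\rho>1$, and the inductive step uses $\rho>2$ (i.e.\ $\frac{\rho-\alpha-1}{1-\alpha}>1$) together with the positivity of the Perron vector precisely where required. Note that the paper gives no proof of this lemma at all --- it is quoted from \cite{Niki} --- and your downward-recurrence argument is the standard one underlying that reference, so there is nothing further to reconcile.
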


For two graphs $G_1$ and $G_2$ with $u\in V(G_1)$ and $v\in V(G_2)$, the coalescence of $G_1$ and $G_2$, denoted by $G_1(u,v)G_2$, is the graph obtained by identifying the vertices $u$ and $v$.

\begin{lem}[{\cite[Lemma 3.2]{Wang}}]\label{lem:coal}
 Let $G_1$ and $G_2$ be two graphs with $u\in V(G_2)$ and $v_1,v_2\in V(G_1)$ such that $N_{G_1}(v_1)\setminus \{v_2\}\subset N_{G_1}(v_2)\setminus \{v_1\}$. If $H_1=G_1(v_1,u)G_2$ and $H_2=G_1(v_2,u)G_2$, then $\rho_\alpha(H_2)>\rho_\alpha(H_1)$.   
\end{lem}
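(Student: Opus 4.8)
The plan is to compare $\rho_\alpha(H_1)$ and $\rho_\alpha(H_2)$ through the Rayleigh quotient of $A_\alpha$, feeding the Perron vector of $A_\alpha(H_1)$ — after a possible interchange of two of its coordinates — as a test vector into $A_\alpha(H_2)$. First I would fix the bookkeeping: realise $H_1$ and $H_2$ on the common vertex set $V(G_1)\cup(V(G_2)\setminus\{u\})$, where in $H_j$ the vertex $v_j$ plays the role of $u$, so that $V(G_1)$ and $V(G_2)\setminus\{u\}$ are disjoint parts of this set. Put $N_0=N_{G_2}(u)$; one may assume $N_0\neq\emptyset$, since otherwise $G_2=\{u\}$ and $H_1=H_2$. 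From the definition of coalescence, $N_{H_1}(v_1)=N_{G_1}(v_1)\cup N_0$, $N_{H_1}(v_2)=N_{G_1}(v_2)$, $N_{H_2}(v_1)=N_{G_1}(v_1)$, $N_{H_2}(v_2)=N_{G_1}(v_2)\cup N_0$, with all other neighbourhoods agreeing; in particular $H_2=H_1-\{v_1w:w\in N_0\}+\{v_2w:w\in N_0\}$, and $N_0$ (lying on the $G_2$-side) is disjoint from $V(G_1)$, so $N_0\cap N_{H_1}(v_2)=\emptyset$ and $v_2\notin N_0$. From the hypothesis $N_{G_1}(v_1)\setminus\{v_2\}\subsetneq N_{G_1}(v_2)\setminus\{v_1\}$ I would record the nonempty set $N':=(N_{G_1}(v_2)\setminus\{v_1\})\setminus(N_{G_1}(v_1)\setminus\{v_2\})$ and, by comparing cardinalities, the strict inequality $|N_{G_1}(v_1)|<|N_{G_1}(v_2)|$.

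Let $\mathbf{x}$ be the Perron vector of $A_\alpha(H_1)$, so $\|\mathbf{x}\|=1$, $\rho_\alpha(H_1)=\mathbf{x}^tA_\alpha(H_1)\mathbf{x}$, and all entries of $\mathbf{x}$ are positive. I would then split into two cases. If $x_{v_2}\ge x_{v_1}$, the conclusion is immediate from Lemma~\ref{lem:edge} applied to $H_1$ with $(v_2,v_1)$ in the roles of $(u,v)$ there and with $N=N_0$: the required conditions $N_0\neq\emptyset$, $N_0\subseteq N_{H_1}(v_1)\setminus(N_{H_1}(v_2)\cup\{v_2\})$, and $x_{v_2}\ge x_{v_1}$ all hold, and the graph $G'$ produced by that lemma is exactly $H_2$, so $\rho_\alpha(H_1)<\rho_\alpha(H_2)$.

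If instead $x_{v_1}>x_{v_2}$, I would let $\mathbf{z}$ be the unit vector obtained from $\mathbf{x}$ by interchanging the entries at $v_1$ and $v_2$, so that $\rho_\alpha(H_2)\ge\mathbf{z}^tA_\alpha(H_2)\mathbf{z}$ while $\rho_\alpha(H_1)=\mathbf{x}^tA_\alpha(H_1)\mathbf{x}$. Expanding $\mathbf{z}^tA_\alpha(H_2)\mathbf{z}-\mathbf{x}^tA_\alpha(H_1)\mathbf{x}$ by means of $\mathbf{w}^tA_\alpha(H)\mathbf{w}=\alpha\sum_v d_H(v)w_v^2+2(1-\alpha)\sum_{pq\in E(H)}w_pw_q$, and using that $H_1$ and $H_2$ differ only in the degrees of $v_1,v_2$ and in whether the edges from $N_0$ land on $v_1$ or on $v_2$, the terms carrying $\sum_{w\in N_0}x_w$ cancel and the difference collapses to $(x_{v_1}-x_{v_2})\big[\alpha(|N_{G_1}(v_2)|-|N_{G_1}(v_1)|)(x_{v_1}+x_{v_2})+2(1-\alpha)\sum_{w\in N'}x_w\big]$. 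Since $N'\neq\emptyset$, $|N_{G_1}(v_2)|>|N_{G_1}(v_1)|$, and every entry of $\mathbf{x}$ is positive, the bracketed factor is positive, and as $x_{v_1}-x_{v_2}>0$ the whole quantity is positive; hence $\rho_\alpha(H_2)\ge\mathbf{z}^tA_\alpha(H_2)\mathbf{z}>\mathbf{x}^tA_\alpha(H_1)\mathbf{x}=\rho_\alpha(H_1)$. In both cases $\rho_\alpha(H_2)>\rho_\alpha(H_1)$.

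The step I expect to be the real work is this second case. A naive attempt would just invoke Lemma~\ref{lem:edge} to move the $N_0$-edges from $v_1$ to $v_2$, but that needs $x_{v_2}\ge x_{v_1}$ for the Perron vector of $H_1$, which can genuinely fail — for instance if $G_2$ is a large star attached at $v_1$, then $x_{v_1}$ is the dominant entry. The coordinate interchange repairs this, but only after a careful accounting of exactly which adjacencies and which degrees change in passing from $H_1$ to $H_2$ and how the swap acts on precisely the cross-terms at $v_1$ and $v_2$; it is the cancellation of the $N_0$-contributions, together with the inequality $|N_{G_1}(v_2)|>|N_{G_1}(v_1)|$ extracted from the domination hypothesis, that makes the resulting difference of Rayleigh quotients provably positive.
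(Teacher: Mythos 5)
Your proof is correct: the case split on whether $x_{v_2}\ge x_{v_1}$, the application of Lemma~\ref{lem:edge} in the first case, and the Rayleigh-quotient comparison with the $v_1,v_2$-entries of the Perron vector interchanged in the second case (where the $N_0$-cross-terms cancel and the hypothesis $N_{G_1}(v_1)\setminus\{v_2\}\subsetneq N_{G_1}(v_2)\setminus\{v_1\}$ yields both $N'\neq\emptyset$ and $|N_{G_1}(v_1)|<|N_{G_1}(v_2)|$) all check out. The paper itself states this lemma without proof, citing \cite[Lemma 3.2]{Wang}, and your argument is essentially the standard proof given there for such coalescence-shifting lemmas, so there is nothing to add beyond noting that the degenerate situations ($N_{G_2}(u)=\emptyset$ or non-proper containment) must indeed be excluded for the strict inequality, as you observe.
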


For any vertex $u \in V(G)$ in a graph $G$, let $$m(u) = \frac{1}{d(u)} \sum_{v:v \sim u} d(v).$$

\begin{lem}[{\cite[Proposition 20]{Niki0}}]\label{lem:d(u)-m(u)}
If $\alpha \in [0,1)$ and $G$ is a graph with no isolated vertices, then 
$$\rho_\alpha(G) \le \max_{u \in V(G)} \{\alpha d(u) + (1-\alpha)m(u)\}.$$ 
 If $\alpha \in (\frac{1}{2},1)$ and $G$ is connected, then the equality holds if and only if $G$ is regular.
\end{lem}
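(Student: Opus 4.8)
The plan is to prove the bound by applying the elementary "test‑vector" estimate for the spectral radius of a nonnegative matrix, with the degree sequence of $G$ as the test vector. Recall that if $M$ is an $n\times n$ nonnegative matrix and $y$ is any positive vector, then $\rho(M)\le \max_{i}(My)_i/y_i$; moreover, if $M$ is irreducible, equality holds if and only if $y$ is an eigenvector of $M$ for $\rho(M)$ (this follows by pairing with the positive left Perron vector $z$: from $z^{t}My=\rho(M)\,z^{t}y$ and $z^{t}My\le\big(\max_i (My)_i/y_i\big)z^{t}y$ one forces $(My)_i=\rho(M)\,y_i$ for every $i$). Here $M=A_\alpha(G)$ and, since $G$ has no isolated vertices, $y=(d(v_1),\dots,d(v_n))^{t}$ is a positive vector.

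The first step is a one-line computation. The $u$-th row of $A_\alpha(G)$ has entry $\alpha d(u)$ in position $u$ and entry $(1-\alpha)$ in each position $v$ with $v\sim u$, so
$$\big(A_\alpha(G)\,y\big)_u=\alpha d(u)^2+(1-\alpha)\sum_{v\sim u}d(v)=\alpha d(u)^2+(1-\alpha)\,d(u)\,m(u).$$
Dividing by $y_u=d(u)>0$ gives $\big(A_\alpha(G)\,y\big)_u/y_u=\alpha d(u)+(1-\alpha)m(u)$, and the test-vector estimate yields $\rho_\alpha(G)\le\max_{u\in V(G)}\{\alpha d(u)+(1-\alpha)m(u)\}$, which is the claimed inequality.

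For the equality statement, assume $\alpha\in(\tfrac12,1)$ and $G$ connected; then $A_\alpha(G)$ is irreducible, so equality in the bound forces $y$ to be (a scalar multiple of) the Perron vector, i.e. $\alpha d(u)+(1-\alpha)m(u)=\lambda$ for some constant $\lambda$ and every $u$. Suppose $G$ is not regular, and let $\Delta$ and $\delta<\Delta$ be its maximum and minimum degrees. Since $\delta\le m(u)\le\Delta$ for every $u$, evaluating the identity at a vertex of degree $\Delta$ gives $\lambda\ge\alpha\Delta+(1-\alpha)\delta$, and at a vertex of degree $\delta$ gives $\lambda\le\alpha\delta+(1-\alpha)\Delta$; combining, $\alpha(\Delta-\delta)\le(1-\alpha)(\Delta-\delta)$, hence $\alpha\le\tfrac12$, a contradiction. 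Thus $G$ is regular. The converse is immediate: if $G$ is $r$-regular then the all-ones vector is the Perron vector and both sides equal $r$.

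The inequality itself is routine; the step I expect to require the most care is the equality analysis — specifically, invoking irreducibility of $A_\alpha(G)$ (valid because $G$ is connected and $1-\alpha>0$) to upgrade "equality in the maximum" to "the degree vector is the Perron vector", and then using the hypothesis $\alpha>\tfrac12$ in the short degree-extremality argument to rule out non-regular $G$. Everything else reduces to the single matrix–vector computation above.
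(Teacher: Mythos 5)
Your proof is correct: the paper states this lemma as a citation to Nikiforov (Proposition 20 of \cite{Niki0}) without reproducing a proof, and your argument — the Collatz--Wielandt test-vector bound applied to the degree vector, followed by the irreducibility argument to force $\alpha d(u)+(1-\alpha)m(u)$ to be constant and the $\Delta$/$\delta$ comparison to extract regularity from $\alpha>\tfrac12$ — is exactly the standard route by which this result is established in the cited source. Nothing further is needed.
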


\begin{lem}[{\cite[Corollary 21]{Niki0}}]\label{lem:d(u)-d(v)}
If $\alpha \in [0,1)$ and $G$ is a connected graph, then $$\rho_\alpha(G) \le \max_{uv \in E(G)} \{\alpha d(u) + (1-\alpha) d(v)\}.$$ 
 The equality holds if and only if $G$ is regular or $G$ is a semi-regular bipartite graph with $\alpha = \frac{1}{2}$.
\end{lem}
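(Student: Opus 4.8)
The plan is to get the inequality immediately from Lemma~\ref{lem:d(u)-m(u)} and then to treat the equality case directly through the Perron vector, since the equality clause of Lemma~\ref{lem:d(u)-m(u)} is stated only for $\alpha\in(\tfrac12,1)$. For the inequality, note that for every vertex $u$ we have $m(u)=\frac{1}{d(u)}\sum_{v\sim u}d(v)\le\max_{v\sim u}d(v)$, so $\alpha d(u)+(1-\alpha)m(u)\le\alpha d(u)+(1-\alpha)\max_{v\sim u}d(v)$, and the right-hand side equals $\alpha d(a)+(1-\alpha)d(b)$ for some edge $ab\in E(G)$. Taking the maximum over $u$ and applying Lemma~\ref{lem:d(u)-m(u)} yields $\rho_\alpha(G)\le\max_{uv\in E(G)}\{\alpha d(u)+(1-\alpha)d(v)\}=:M$.

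For the equality case I would work with the matrix $C=D(G)^{-1}A_\alpha(G)D(G)$, which is nonnegative, irreducible, similar to $A_\alpha(G)$, has $u$-th row sum $f(u):=\alpha d(u)+(1-\alpha)m(u)$, and has Perron vector $D(G)^{-1}\mathbf{x}$, where $\mathbf{x}$ is the Perron vector of $A_\alpha(G)$. Suppose $\rho_\alpha(G)=M$. Since $\rho_\alpha(G)\le\max_u f(u)\le M$, the spectral radius of $C$ equals its maximum row sum; the standard equality analysis for a nonnegative irreducible matrix then forces the Perron vector of $C$ to be constant, so $\mathbf{x}$ is a positive scalar multiple of the degree vector and $f(u)=M$ for \emph{every} vertex $u$. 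Feeding $f(u)=M$ back into the chain $\alpha d(u)+(1-\alpha)m(u)\le\alpha d(u)+(1-\alpha)\max_{v\sim u}d(v)\le M$ gives $m(u)=\max_{v\sim u}d(v)$ for every $u$; since $m(u)$ is the average of the degrees of the neighbours of $u$, this means all neighbours of any vertex have the same degree.

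It then remains to classify the connected graphs in which every vertex has all its neighbours of one degree. If some edge joins two vertices of equal degree $\delta$, a propagation argument along edges (using connectedness) shows $G$ is $\delta$-regular. Otherwise every edge joins vertices of distinct degrees; tracking degree values along edges, and ruling out odd cycles since along any cycle alternate vertices must have equal degree, shows $G$ is bipartite with the degree constant on each part, i.e.\ semiregular bipartite with part-degrees $r\ne s$. In that case $f$ equals $\alpha r+(1-\alpha)s$ on one part and $\alpha s+(1-\alpha)r$ on the other, so $f$ being constant forces $(2\alpha-1)(r-s)=0$, hence $\alpha=\tfrac12$. Conversely, if $G$ is $d$-regular then $A_\alpha(G)\mathbf{1}=d\mathbf{1}$, so $\rho_\alpha(G)=d=M$; and if $\alpha=\tfrac12$ and $G$ is semiregular bipartite with part-degrees $r,s$, then $A_{1/2}(G)=\tfrac12Q(G)$ has the positive eigenvector equal to the degree vector with eigenvalue $\tfrac{r+s}{2}=M$, so $\rho_\alpha(G)=M$. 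This yields the asserted characterization.

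The step I expect to be the main obstacle is precisely the equality analysis: one must re-derive, for every $\alpha\in[0,1)$, that $\rho_\alpha(G)=M$ forces both $f\equiv M$ and $\mathbf{x}$ proportional to the degree vector (the equality clause of Lemma~\ref{lem:d(u)-m(u)} is not available for $\alpha\in[0,\tfrac12]$), and then carry out carefully the combinatorial dichotomy between the regular and the semiregular bipartite cases, together with the computation that pins down $\alpha=\tfrac12$ in the latter.
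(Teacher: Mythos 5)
The paper does not prove this statement at all: Lemma~\ref{lem:d(u)-d(v)} is imported verbatim as Corollary~21 of \cite{Niki0}, so there is no in-paper argument to compare against. Your proof is correct and is essentially the standard derivation underlying Nikiforov's Propositions~20--21: the similarity transform $C=D(G)^{-1}A_\alpha(G)D(G)$ has row sums $\alpha d(u)+(1-\alpha)m(u)$, the bound $m(u)\le\max_{v\sim u}d(v)$ converts the vertex bound into the edge bound, and the equality analysis (irreducible nonnegative matrix attaining its maximum row sum forces all row sums equal and Perron vector $\mathbf{1}$, hence all neighbours of each vertex share a degree, hence $G$ is regular or semiregular bipartite, with $\alpha=\tfrac12$ forced in the latter case) is exactly the right way to handle the equality clause uniformly for all $\alpha\in[0,1)$, where the equality statement of Lemma~\ref{lem:d(u)-m(u)} as quoted here is unavailable. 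The only cosmetic caveat is that $\max_{uv\in E(G)}\{\alpha d(u)+(1-\alpha)d(v)\}$ should be read over ordered adjacent pairs, which is consistent with how you produce the witnessing edge $ab$; and the argument tacitly assumes $G$ has at least one edge so that $D(G)$ is invertible, which is harmless.
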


%%%%%%%%%%%%%%%%%%%%%%%%%%%%%%%%%%%%%%%%%%%%%%%%%%%%%%%%%%%%%%%%%%%%

\section{Extremal unicyclic graphs on $n$ vertices with girth $g$ or $k$ pendant vertices}

In this section, we characterize the extremal graphs with maximum $A_\alpha$-spectral radius among $\mathcal{U}_n(g)$ and $\mathscr{U}_n(k)$, respectively. First, let us prove the following graph transformation. 

\begin{lem}\label{lem:path2}
Let $G$ be a graph with a cycle $C_t$, in which a path $P_{l+1}$ of length $l\geq 2$ is attached to a vertex $u$ of $C_t$. Let $G^*$ be the graph obtained from $G$ by replacing the path $P_{l+1}$ by $l$ pendant edges at $u$. Then $\rho_\alpha(G)<\rho_\alpha(G^*)$.
\end{lem}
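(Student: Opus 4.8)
The plan is to interpolate between $G$ and $G^{*}$ by a chain of graphs of the same order, in which at each step the last edge of the pendant path is detached and re-attached as a pendant edge at $u$, and to show that each such step strictly increases the $A_{\alpha}$-spectral radius by an application of Lemma~\ref{lem:edge}. Write the pendant path as $u=v_{0},v_{1},\dots,v_{l}$, so that $d(v_{1})=\cdots=d(v_{l-1})=2$ and $v_{l}$ is a pendant vertex. For $1\le j\le l$, let $G_{j}$ be the graph obtained from $G$ by replacing the path $P_{l+1}$ with a pendant path $v_{0}v_{1}\cdots v_{j}$ of length $j$ at $u$ together with $l-j$ additional pendant edges at $u$, while leaving the rest of $G$ (if any) untouched. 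Then $G_{l}=G$, $G_{1}=G^{*}$, and each $G_{j}$ has the same order as $G$, so it suffices to prove $\rho_{\alpha}(G_{j})<\rho_{\alpha}(G_{j-1})$ for every $j$ with $2\le j\le l$.

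Fix such a $j$ and let $\mathbf{x}$ be the Perron vector of $A_{\alpha}(G_{j})$. Since $G_{j}$ properly contains the cycle $C_{t}$, Lemma~\ref{lem:subgraph} gives $\rho_{\alpha}(G_{j})>\rho_{\alpha}(C_{t})=2$, the last equality holding because $C_{t}$ is $2$-regular (Lemma~\ref{lem:spec}). Consequently Lemma~\ref{lem:path-pendant} applies to the pendant path $v_{0}v_{1}\cdots v_{j}$ with root $v_{0}=u$ and yields $x_{u}=x_{v_{0}}>x_{v_{1}}>\cdots>x_{v_{j}}$; in particular $x_{u}>x_{v_{j-1}}$. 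Now apply Lemma~\ref{lem:edge} to the vertices $u$ and $v_{j-1}$ with $N=\{v_{j}\}$: because $j\ge2$ we have $v_{j-1}\neq u$, hence $v_{j}$ is a pendant vertex adjacent to $v_{j-1}$ but not to $u$, so $\emptyset\neq N\subseteq N(v_{j-1})\setminus(N(u)\cup\{u\})$; together with $x_{u}\ge x_{v_{j-1}}$ this gives $\rho_{\alpha}(G_{j})<\rho_{\alpha}(G_{j}')$, where $G_{j}'=G_{j}-v_{j-1}v_{j}+uv_{j}$. But deleting $v_{j-1}v_{j}$ and adding $uv_{j}$ shortens the pendant path to $v_{0}v_{1}\cdots v_{j-1}$ and creates one more pendant edge at $u$, so $G_{j}'=G_{j-1}$. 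Chaining these strict inequalities over $j=l,l-1,\dots,2$ gives $\rho_{\alpha}(G)=\rho_{\alpha}(G_{l})<\rho_{\alpha}(G_{l-1})<\cdots<\rho_{\alpha}(G_{1})=\rho_{\alpha}(G^{*})$, as required.

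The verifications above are routine; the one place that needs a little care is ensuring $\rho_{\alpha}(G_{j})>2$, which is what makes Lemma~\ref{lem:path-pendant} applicable and thus lets us order the Perron-vector entries along the pendant path. This is precisely why the argument first passes through the strict monotonicity of $\rho_{\alpha}$ under proper subgraphs (Lemma~\ref{lem:subgraph}) and the fact that $\rho_{\alpha}(C_{t})=2$. I do not expect any substantive obstacle beyond this bookkeeping.
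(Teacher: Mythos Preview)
Your proof is correct and follows essentially the same approach as the paper: iteratively detach the last edge of the pendant path and reattach it at $u$, using Lemma~\ref{lem:path-pendant} to compare Perron-vector entries and Lemma~\ref{lem:edge} to obtain the strict increase. Your version is in fact more careful than the paper's, which writes ``by repeatedly applying this transformation'' without explicitly re-examining the Perron vector of each intermediate graph or verifying the hypothesis $\rho_{\alpha}>2$ needed for Lemma~\ref{lem:path-pendant}; you handle both points cleanly.
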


\begin{proof}
	Let $P_{l+1}= u_0 u_1 \hdots u_l$ be a path attached to the vertex $u$ of $C_t$, and hence $u=u_0$. Let $x_0,x_1,\hdots,x_{l}$ be the entries of a unit Perron vector of $A_\alpha(G)$ corresponding to the vertices $u_0,u_1,\hdots,u_{l}$. Since $l \geq 2$, $u_{l-1} \neq u$ and hence by Lemma \ref{lem:path-pendant}, we have $x_0 > x_{l-1}$. Therefore, by applying Lemma \ref{lem:edge}, we have a graph $G_1=G-u_{l-1}u_l+uu_l$ with $\rho_\alpha(G)<\rho_\alpha(G_1)$. By repeatedly applying this transformation, we get the graph $G^*$, in which the pendant path of length $l$ is replaced by $l$ pendant edges at $u$ and $\rho_\alpha(G)<\rho_\alpha(G^*)$. 
 \end{proof}

 Let $U_n(g)$ be the unicyclic graph with a cycle of length $g$ and $n-g$ pendant vertices attached to a single vertex of the cycle. In the following theorem, we prove that the unicyclic graph $U_n(g)$ has maximum $A_\alpha$-spectral radius in $\mathcal{U}_n(g)$. 

\begin{theorem}\label{thm:uni-girth}
 Let $G$ be a unicyclic graph on $n$ vertices with girth $g \ge 3$. Then $$\rho_\alpha(G)\leq \rho_\alpha(U_n(g)), $$
 and the equality holds if and only if $G \cong U_n(g)$.
\end{theorem}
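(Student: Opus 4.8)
The plan is to take an arbitrary unicyclic graph $G \in \mathcal{U}_n(g)$ and transform it, via a sequence of modifications each of which strictly increases the $A_\alpha$-spectral radius (or leaves it fixed only when $G \cong U_n(g)$ already), into $U_n(g)$. Since $G$ is unicyclic with girth $g$, it has a unique cycle $C_g$, and $G$ is obtained by attaching rooted trees to the vertices of $C_g$. First I would reduce all the attached trees to stars: if any vertex $u$ of $C_g$ has a tree hanging off it that is not a star centered at $u$, then that tree contains a pendant path of length $\ge 2$ rooted at some vertex, and I can invoke Lemma~\ref{lem:path2} (pulling a pendant path into pendant edges at the root) together with Lemma~\ref{lem:edge} to strictly increase $\rho_\alpha$; iterating, every vertex of the cycle carries only pendant edges. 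Note that along the way we should check $\rho_\alpha(G) > 2$ so that Lemma~\ref{lem:path-pendant} applies: this holds because $G$ properly contains $C_g$ (when $n > g$), so by Lemma~\ref{lem:subgraph} $\rho_\alpha(G) > \rho_\alpha(C_g) = 2$; if $n = g$ then $G = C_g = U_n(g)$ and there is nothing to prove.

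Next, once $G$ is a cycle $C_g$ with bunches of pendant edges at several of its vertices, I would consolidate all the pendant edges onto one cycle vertex. Suppose pendant edges are attached at two distinct cycle vertices $v_1$ and $v_2$; let $\mathbf{x}$ be the Perron vector and assume without loss of generality $x_{v_1} \ge x_{v_2}$. Let $N$ be the set of pendant neighbors of $v_2$. These lie in $N(v_2) \setminus (N(v_1) \cup \{v_1\})$ (a pendant neighbor of $v_2$ is not adjacent to $v_1$ and is not $v_1$ itself, since $g \ge 3$ forces $v_1 \not\sim$ any pendant vertex of $v_2$ and $v_1 \ne v_2$), so Lemma~\ref{lem:edge} gives a strict increase in $\rho_\alpha$ when we move all of $N$ from $v_2$ to $v_1$. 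Repeating until only one cycle vertex bears pendant edges yields exactly $U_n(g)$, and every step was strict unless there was only one such vertex to begin with. Combining the two phases, $\rho_\alpha(G) \le \rho_\alpha(U_n(g))$ with equality iff $G \cong U_n(g)$.

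The main technical point to be careful about is the correct bookkeeping of the Perron-vector inequality $x_u \ge x_v$ needed to apply Lemma~\ref{lem:edge} and Lemma~\ref{lem:path2}: in the path-to-star reduction, Lemma~\ref{lem:path-pendant} supplies the needed strict inequality $x_{\text{root}} > x_{\text{interior vertex of the pendant path}}$, which is exactly the hypothesis of Lemma~\ref{lem:edge} with $u$ the root and $v$ the second-to-last path vertex; and in the consolidation phase, we simply relabel so that the cycle vertex with the larger Perron entry receives the pendant edges. A secondary subtlety is ensuring that each transformation keeps the graph unicyclic with girth exactly $g$ — moving pendant edges around and collapsing pendant paths never touches the cycle, so girth and the unicyclic property are preserved throughout. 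I expect no serious obstacle beyond this routine verification, since all the heavy lifting is done by the already-established Lemmas~\ref{lem:path-pendant}, \ref{lem:edge}, and \ref{lem:path2}.
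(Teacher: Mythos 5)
Your overall strategy (iteratively applying \(\rho_\alpha\)-increasing transformations built from Lemmas~\ref{lem:edge}, \ref{lem:path-pendant} and \ref{lem:path2}) is the same as the paper's, and your consolidation phase and the \(\rho_\alpha(G)>2\) check are fine. However, there is a genuine gap in your first phase, and it is precisely where the paper has to work hardest. Your claim that a rooted tree which is not a star centered at its root must contain a pendant path of length \(\ge 2\) is false: take a cycle vertex \(u\) adjacent to a leaf \(b\) and to a vertex \(a\), where \(a\) has two leaf children \(c,d\). Every pendant path here has length \(1\), so Lemma~\ref{lem:path2} (and the collapse argument behind it) gives you nothing, yet the tree is not a star centered at \(u\). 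To finish you would have to move the pendant edges at \(a\) up to \(u\) via Lemma~\ref{lem:edge}, which requires \(x_u\ge x_a\); if instead \(x_a>x_u\) you cannot simply ``relabel'' as in your consolidation phase, because the two candidate recipients are no longer interchangeable --- pulling cycle edges toward \(a\) changes the cycle and hence the girth, so the resulting graph may leave \(\mathcal{U}_n(g)\).

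This hard case (an interior branching vertex \(v\) of an attached tree with \(x_v > x_{v_1}\), where \(v_1\) is the cycle vertex carrying the tree) is exactly what the paper's Case~2 is for: when \(d(v_1,v)=1\) it rotates a cycle edge onto \(v\) (temporarily increasing the girth to \(g+1\)), then contracts an edge of the resulting internal path using Lemma~\ref{lem:subdiv} to restore girth \(g\), and re-attaches the deleted vertex as a pendant via Lemma~\ref{lem:subgraph}; when \(d(v_1,v)\ge 2\) it contracts the internal path from \(v_1\) to \(v\) directly. Your proposal never invokes Lemma~\ref{lem:subdiv}, and without it (or some substitute) the reduction of arbitrary attached trees to pendant edges at a single cycle vertex does not go through. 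A secondary, smaller point: Lemma~\ref{lem:path2} as stated only applies to paths rooted at a cycle vertex, so applying it to pendant paths rooted at interior tree vertices requires at least a remark that its proof extends verbatim.
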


\begin{proof}
	Let $G \in \mathcal{U}_n(g)$ be a graph with the maximal $A_\alpha$-spectral radius and  $V(G) = \{v_1,v_2,\hdots,v_n\}$. Let $\mathbf{x} = (x_1,x_2,\hdots,x_n)^t$ be the unit Perron vector of $A_\alpha(G)$, where $x_i$ corresponds to the vertex $v_i$. Let $C_g$ be the unique cycle of length $g$ in $G$. Without loss of generality, we assume that $V(C_g) = \{v_1,v_2,\hdots, v_g\}$.
	
    First, we claim that the graph $G$ has trees attached to a single vertex of the cycle $C_g$. Suppose that there are two trees $T_1$ and $T_2$ incident to the vertices $v_1$ and $v_2$, respectively. Without loss of generality, we assume that $x_1 \ge x_2$. Since $G$ is unicyclic graph, there exists a vertex $u \in N(v_2)\setminus \{v_1\}$ but $u\notin N(v_1)$. If $G_1=G-uv_2+uv_1$, then $G_1 \in  \mathcal{U}_n(g)$ and by Lemma \ref{lem:edge}, we have $\rho_\alpha(G) < \rho_\alpha(G_1)$, which contradicts to the fact that $G$ has maximal $A_\alpha$-spectral radius in $\mathcal{U}_n(g)$. Therefore, all trees in $G$ are incident to a single vertex, say $v_1$ of the cycle $C_g$. Next, we claim that each of the trees are paths. If not, then there exists a tree $T$ with a vertex $v$ such that $d(v) \ge 3$ and we choose the vertex $v$ such that $d(v_1,v)$ is minimum. Now, consider the following two cases:
	
	\textbf{Case 1.} Let $x_1 \ge x_v$. Since $d(v) \ge 3$ there exists a vertex $w \in N(v)$ such that $d(w,v_1) > d(v,v_1)$. Now, consider the graph $G_2=G-vw+v_1w$, then $G_2 \in  \mathcal{U}_n(g)$ and by Lemma \ref{lem:edge}, we have $\rho_\alpha(G) < \rho_\alpha(G_2) $, which is a contradiction.
 
	\textbf{Case 2.} Let $x_1 < x_v$. For this case, we divide the proof into the following two subcases:
	
   \textbf{Subcase I.} Let $d(v_1,v) = 1$. Since $G$ is a unicyclic graph, $v_2$ is not adjacent to $v$. If $G_3=G-v_1v_2+vv_2$, then  $G_3 \in \mathcal{U}_n(g+1)$ and by Lemma \ref{lem:edge}, it follows that $\rho_\alpha(G)<\rho_\alpha(G_3)$. Now, consider $G_4=G_3-v_1v_2-v_2v_3+vv_3-v_2$, then $G_4 \in \mathcal{U}_{n-1}(g)$. Since $d(v) \ge 3$, $vv_3\hdots v_tv_1v$ is an internal path of $G_3$ and hence by Lemma \ref{lem:subdiv}, we have  $\rho_\alpha(G_3) < \rho_\alpha(G_4)$. Now, construct a graph $G_5$ from $G_4$ by adding the deleted vertex $v_2$ with a pendant vertex of $G_4$. Then $G_5 \in \mathcal{U}_{n}(g)$, and by Lemma \ref{lem:subgraph}, it follows that $\rho_\alpha(G_4) < \rho_\alpha(G_5)$. Thus, there exists a graph $G_5 \in \mathcal{U}_{n}(g)$ such that $\rho_\alpha(G) < \rho_\alpha(G_5)$, which is a contradiction.
	
	\textbf{Subcase II.} Let $d(v_1,v) \ge 2$. Let $P$ be the unique path from $v_1$ to $v$. Since the length of the path $P$ is at least $2$, there exists a vertex $w$ on the path $P$ such that $w \neq v,v_1$ and $w \sim v$. Clearly, $d(w) = 2$ and let $v_l$ be the neighbour of $w$ other than $v$ (it is possible that $v_l=v_1$). Let $G_6=G-wv-v_lw-w$. Since $P = v_1  \hdots v_lwv$ is an internal path of $G$, by Lemma \ref{lem:subdiv}, it follows that $\rho_\alpha(G) < \rho_\alpha(G_6)$. Now, construct a graph $G_7$ from $G_6$ by adding the deleted vertex $w$ with a pendant vertex of $G_6$. Then, it follows by Lemma \ref{lem:subgraph} that $\rho_\alpha(G_6) < \rho_\alpha(G_7)$. Thus, there exists a graph $G_7 \in \mathcal{U}_{n}(g)$ such that $\rho_\alpha(G) < \rho_\alpha(G_7)$, which is a contradiction.
	
   Thus, the maximal graph $G$ has only paths attached to a single vertex of the cycle $C_g$. Now, by Lemma \ref{lem:path2}, we can conclude that the paths are pendant edges, and hence $G \cong U_n(g)$.
\end{proof}

Now, we consider the connected unicyclic graphs of order $n$  with $k$ pendant vertices and characterize the extremal graphs with maximum $A_\alpha$-spectral radius. Let $\mathscr{U}_n(t,k)$ be the set of all connected unicyclic graphs of order $n$ with a unique cycle of length $t$ and $k$ pendant vertices. Let $U_n(t,k)$ denote the unicyclic graph of order $n$ with a unique cycle $C_t$ of length $t$ and there are $k$ paths of almost equal lengths attached to a single vertex of $C_t$. In the following lemma, we prove that $U_n(t,k)$ has maximum $A_\alpha$-spectral radius in $\mathscr{U}_n(t,k)$.

\begin{lem}\label{lem:uni}
Let $t,k$ be integers such that $t \ge 3$ and $1 \le k \le n-t$. If $G \in \mathscr{U}_n(t,k)$, then	
 $$\rho_\alpha(G) \leq \rho_\alpha(U_n(t,k))$$
 with equality if and only if $G \cong U_n(t,k)$.
\end{lem}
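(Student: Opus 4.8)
The plan is to take $G\in\mathscr{U}_n(t,k)$ of maximum $A_\alpha$-spectral radius, let $\mathbf{x}$ be its Perron vector and $C_t$ its unique cycle; since $G$ is unicyclic with a pendant vertex, $\rho_\alpha(G)>2$, so Lemmas~\ref{lem:edge}, \ref{lem:subdiv}, \ref{lem:path-pendant} and~\ref{lem:coal} all apply. I would reduce the structure of $G$ in three stages. \emph{Stage 1: all trees hang at one vertex of $C_t$.} This copies the argument in Theorem~\ref{thm:uni-girth}: if nontrivial trees are rooted at distinct cycle vertices $v_i,v_j$ with $x_{v_i}\ge x_{v_j}$ then, as $G$ is unicyclic, the set $N$ of tree-neighbours of $v_j$ avoids $N(v_i)\cup\{v_i\}$; moving the maximal subtrees rooted at $N$ from $v_j$ to $v_i$ preserves the cycle length and the number of pendant vertices (and keeps $d(v_j)\ge 2$), so the new graph lies in $\mathscr{U}_n(t,k)$ but has strictly larger $\rho_\alpha$ by Lemma~\ref{lem:edge}, contradicting maximality. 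Hence all trees hang at one vertex $v_1\in V(C_t)$; write $T$ for the tree rooted there, so $T$ has exactly $k$ leaves and $d_G(v_1)\ge 3$ (if $k=n-t$ then $T$ is already the star at $v_1$ and Stages~2--3 are vacuous).

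\emph{Stage 2: $T$ is a spider with centre $v_1$.} Suppose some $v\in V(T)\setminus\{v_1\}$ has $d_G(v)\ge 3$; take such a $v$ with $d(v_1,v)$ minimal, so the interior of the $v_1$--$v$ path $P$ consists of degree-$2$ vertices. If $P$ has length $\ge 2$ it is an internal path, so contracting the edge of $P$ at $v_1$ gives $G'\in\mathscr{U}_{n-1}(t,k)$ with $\rho_\alpha(G')>\rho_\alpha(G)$ (Lemma~\ref{lem:subdiv} read backwards: $G$ is a subdivision of $G'$ at an edge on an internal path of $G'$), and appending a vertex to a pendant path of $G'$ yields $G''\in\mathscr{U}_n(t,k)$ with $\rho_\alpha(G'')>\rho_\alpha(G')$ by Lemma~\ref{lem:subgraph}, a contradiction. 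If $P$ has length $1$ and $x_{v_1}\ge x_v$, relocating to $v_1$ the maximal subtree rooted at a child of $v$ stays in $\mathscr{U}_n(t,k)$ and increases $\rho_\alpha$ by Lemma~\ref{lem:edge}, again a contradiction. The remaining case, $v\in N(v_1)$ with $x_v>x_{v_1}$, is the crux. Here I would first move (Lemma~\ref{lem:edge}) every maximal subtree rooted at a child of $v_1$ other than $v$ onto $v$; if $v_1$ still has degree $\ge 3$, relocating a cycle-edge at $v_1$ to $v$ produces a graph in $\mathscr{U}_n(t+1,k)$ of larger $\rho_\alpha$ in which the $v_1$--$v$ arc of the enlarged cycle is an internal path, and contracting an edge of it and re-extending a pendant path (as above) gives the sought graph in $\mathscr{U}_n(t,k)$ beating $G$. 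If instead $v$ is the unique tree-child of $v_1$ (so $d_G(v_1)=3$), this configuration has to be handled directly; this sub-case, together with checking that $x_v>x_{v_1}$ survives the intermediate edge-moves, is the main technical point.

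\emph{Stage 3: the legs are almost equal.} After Stage~2, $T$ is a union of $k$ pendant paths at $v_1$, of lengths $l_1,\dots,l_k$. If $l_i\ge l_j+2$, I would show that transferring the leaf of the $l_i$-path to the end of the $l_j$-path stays in $\mathscr{U}_n(t,k)$ and strictly increases $\rho_\alpha$. This is the familiar balancing statement for pendant paths at a common vertex (the same phenomenon behind Rojo's characterisation of extremal trees with $k$ pendant vertices); I would obtain it either by comparing Perron-vector entries along the two paths (using the monotone ratios furnished by Lemma~\ref{lem:path-pendant}) or by an auxiliary lemma of the same flavour as Lemma~\ref{lem:path}. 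Iterating until $|l_i-l_j|\le 1$ for all $i,j$ — a process that terminates since $\sum_i l_i^2$ strictly decreases at each step — forces $G\cong U_n(t,k)$. As every transformation above is a strict increase, this also gives uniqueness, so $\rho_\alpha(G)\le\rho_\alpha(U_n(t,k))$ with equality iff $G\cong U_n(t,k)$.

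The step I expect to be the real obstacle is Stage~2 in the case $v\in N(v_1)$, $x_v>x_{v_1}$: unlike the internal-path case, it cannot be carried out without temporarily leaving $\mathscr{U}_n(t,k)$ (enlarging the cycle to length $t+1$, then contracting it back while changing the vertex count), so one must track the pendant count carefully and re-establish the relevant Perron inequalities after each move; the sub-case where $v_1$ has a single tree-child needs an argument of its own.
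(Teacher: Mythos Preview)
Your approach is correct and coincides with the paper's, which compresses your Stages~1--2 into a single reference to the \emph{proof} of Theorem~\ref{thm:uni-girth} (whose edge-moves, contractions and leaf-reinsertions all preserve the cycle length \emph{and} the number of pendant vertices) and then disposes of Stage~3 by a direct appeal to Lemma~\ref{lem:path}. The ``obstacle'' case $v\in N(v_1)$ with $x_v>x_{v_1}$ that you flag is exactly Subcase~I in that proof, and it is handled there by precisely the manoeuvre you describe --- move a cycle edge from $v_1$ to $v$, contract an edge of the enlarged cycle via Lemma~\ref{lem:subdiv}, then restore the deleted vertex by extending a pendant path (Lemma~\ref{lem:subgraph}) --- with no separate treatment of the sub-case $d_G(v_1)=3$.
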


\begin{proof}
	Let $G$ be a graph with maximal $A_\alpha$-spectral radius in $\mathscr{U}_n(t,k)$. Therefore, by Theorem~\ref{thm:uni-girth}, we can conclude that $G$ has a cycle $C_t$ of length $t$ and $k$ pendant paths are attached to a single vertex of $C_t$. Again, by Lemma~\ref{lem:path}, all the $k$ paths have almost equal lengths. Thus, $G \cong U_n(t,k)$.
\end{proof}

\begin{lem}\label{lem:uni2}
	Let $t,k$ be integers such that $t \ge 4$ and $1 \le k \le n-t$. Then $$\rho_\alpha(U_n(t,k))<\rho_\alpha(U_n(t-1,k)).$$
\end{lem}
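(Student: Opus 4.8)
The plan is to express both $\rho_\alpha(U_n(t,k))$ and $\rho_\alpha(U_n(t-1,k))$ through the $A_\alpha$-eigen-equation at the unique vertex of maximum degree, and then to compare the two resulting scalar equations. Write the cycle of $U_n(t,k)$ as $vw_1w_2\cdots w_{t-1}v$ with the $k$ pendant paths rooted at $v$, and put $\ell=\lfloor(n-t)/k\rfloor\ge 1$ for the shorter leg-length that occurs. Let $\mathbf x$ be the Perron vector and $\rho=\rho_\alpha(U_n(t,k))$; since $U_n(t,k)$ is unicyclic and non-regular, Lemma~\ref{lem:spec} gives $\rho>2$. Along any pendant path $vb_1\cdots b_\ell$, and along the arc $w_1\cdots w_{t-1}$ (whose two endpoints are both adjacent to $v$), the $A_\alpha$-eigen-equations at the interior degree-$2$ vertices, together with the pendant (resp.\ reflection) boundary condition, force $x_{b_1}$ and $x_{w_1}=x_{w_{t-1}}$ to be determined multiples of $x_v$; call these multipliers $r_L(\rho,\ell)$ and $r_C(\rho,t)$. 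The relevant ``interior'' matrices are $A_\alpha$-type matrices of paths, so all their eigenvalues are $<2$; hence for $\rho>2$ the multipliers are finite, and (routinely, e.g.\ via the continued-fraction form of $r_L$) each is strictly decreasing in $\rho$ on $(2,\infty)$. Substituting into the eigen-equation at $v$ gives $\rho=\alpha(k+2)+(1-\alpha)\big(2r_C(\rho,t)+\sum_j r_L(\rho,\ell_j)\big)$. Thus, with $\Psi_t(s):=\alpha(k+2)+(1-\alpha)\big(2r_C(s,t)+\sum_j r_L(s,\ell_j)\big)-s$, the function $\Psi_t$ is strictly decreasing on $(2,\infty)$ and $\rho_\alpha(U_n(t,k))$ is its unique root there; the same holds for $U_n(t-1,k)$ with $\Psi_{t-1}$.

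Next I would record the two monotonicities that drive the comparison. Solving the recursions, with $z=z(\rho)>1$ the larger root of $z^2-\tfrac{\rho-2\alpha}{1-\alpha}z+1=0$ and $\nu=\tfrac{\rho-\alpha}{1-\alpha}$, one gets
\[
r_C(\rho,t)=\frac{z+z^{\,t-1}}{1+z^{\,t}},\qquad
r_L(\rho,\ell)=\frac{z^{\,2\ell-3}(1-\nu z)-z(1-\nu/z)}{z^{\,2\ell-2}(1-\nu z)-(1-\nu/z)},
\]
and a direct computation gives, for every $\rho>2$,
\[
r_C(\rho,t)-r_C(\rho,t+1)=\frac{z^{\,t-1}(z-1)^2(z+1)}{(1+z^{\,t})(1+z^{\,t+1})}>0,
\]
while, using $1-\nu z<0$ and $1-\nu/z<0$ (valid since $z<\tfrac{\rho-2\alpha}{1-\alpha}\le\nu$),
\[
r_L(\rho,\ell+1)-r_L(\rho,\ell)=\frac{(1-\nu z)(1-\nu/z)\,z^{\,2\ell-3}(z^2-1)^2}{\big(z^{\,2\ell-2}(1-\nu z)-(1-\nu/z)\big)\big(z^{\,2\ell}(1-\nu z)-(1-\nu/z)\big)}>0 .
\]
In words: enlarging the cycle strictly decreases the cycle-multiplier $r_C$, whereas enlarging a pendant path strictly increases the path-multiplier $r_L$.

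To finish, pass from $U_n(t,k)$ to $U_n(t-1,k)$: the cycle length drops from $t$ to $t-1$, and, since both graphs carry $k$ pendant paths of almost equal lengths at $v$, the multiset of leg-lengths of $U_n(t-1,k)$ is that of $U_n(t,k)$ with one occurrence of $\ell=\lfloor(n-t)/k\rfloor$ replaced by $\ell+1$. By the two inequalities above, $2r_C(s,t-1)+\sum_j r_L(s,\ell'_j)>2r_C(s,t)+\sum_j r_L(s,\ell_j)$ for every $s>2$, i.e.\ $\Psi_{t-1}(s)>\Psi_t(s)$ on $(2,\infty)$. Evaluating at $s=\rho_\alpha(U_n(t,k))$, which is a root of $\Psi_t$, gives $\Psi_{t-1}\big(\rho_\alpha(U_n(t,k))\big)>0$; as $\Psi_{t-1}$ is strictly decreasing on $(2,\infty)$ with unique root $\rho_\alpha(U_n(t-1,k))$ there, we conclude $\rho_\alpha(U_n(t-1,k))>\rho_\alpha(U_n(t,k))$.

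The only genuinely computational point, and the one I expect to be the main work (though it is elementary), is verifying the two displayed monotonicity identities; everything else is bookkeeping once $\rho_\alpha$ is written as the unique root of $\Psi_t(s)=0$ on $(2,\infty)$. I would deliberately avoid trying to transform $U_n(t,k)$ directly into a member of $\mathscr{U}_n(t-1,k)$ of larger $A_\alpha$-spectral radius via Lemmas~\ref{lem:edge} and~\ref{lem:uni}: any single local move that keeps exactly $k$ pendant vertices must relocate a cycle vertex toward the low-weight end of a pendant path, so a Rayleigh-quotient estimate against the Perron vector of $U_n(t,k)$ is not sharp enough in the regime where the cycle and the pendant paths are simultaneously long, where $\rho_\alpha(U_n(t-1,k))-\rho_\alpha(U_n(t,k))$ is exponentially small; the scalar-equation route above sidesteps this.
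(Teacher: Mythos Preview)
Your argument is correct: the transfer-matrix set-up is sound, the explicit formulas for $r_C$ and $r_L$ check out, and the two monotonicity identities you display are right (for $r_L$ one uses that both $1-\nu z$ and $1-\nu/z$ are negative, so the denominators $z^{2\ell-2}(1-\nu z)-(1-\nu/z)$ are negative and their product is positive). It is, however, a far longer route than the paper's. The paper proves the lemma in three lines by detouring through a graph on $n-1$ vertices: since $d(v_1)\ge 3$ the cycle of $U_n(t,k)$ is an internal path, so Lemma~\ref{lem:subdiv} gives $\rho_\alpha(U_n(t,k))<\rho_\alpha(U_{n-1}(t-1,k))$; reattaching the deleted vertex at a pendant vertex of $U_{n-1}(t-1,k)$ produces some $G_1\in\mathscr{U}_n(t-1,k)$ with $\rho_\alpha(U_{n-1}(t-1,k))<\rho_\alpha(G_1)$ by Lemma~\ref{lem:subgraph}; and then $\rho_\alpha(G_1)\le\rho_\alpha(U_n(t-1,k))$ by Lemma~\ref{lem:uni}. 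Your closing paragraph rejects the transformation approach on the grounds that a single local move preserving $k$ pendant vertices must relocate a cycle vertex to the low-weight end of a pendant path---but that objection is aimed at Lemma~\ref{lem:edge}, not at Lemma~\ref{lem:subdiv}. The paper avoids the Rayleigh-quotient difficulty you describe precisely by \emph{not} staying inside $\mathscr{U}_n(\,\cdot\,,k)$ at each step: it drops to $n-1$ vertices via un-subdivision, climbs back via the subgraph lemma, and only invokes Lemma~\ref{lem:uni} at the end. What your method buys in exchange for the extra computation is self-containment (no appeal to Lemma~\ref{lem:uni}) and a quantitative handle on the gap $\rho_\alpha(U_n(t-1,k))-\rho_\alpha(U_n(t,k))$.
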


\begin{proof}
	Let $C_t = v_1v_2\hdots v_tv_1$ be the unique cycle of length $t$ in $U_n(t,k)$. Without loss of generality, we assume that the $k$ paths of almost equal lengths are attached to the vertex $v_1$. Since $d(v_1) \ge 3$,  $v_1v_2\hdots v_tv_1$ is an internal path of $U_{n-1}(t-1,k)$ and hence by Lemma \ref{lem:subdiv}, we have $\rho_\alpha(U_n(t,k)) < \rho_\alpha(U_{n-1}(t-1,k))$. Let $G_1$ be a graph constructed from $U_{n-1}(t-1,k)$ by attaching an isolated vertex to a pendant vertex of $U_{n-1}(t-1,k)$. Then $G_1\in \mathscr{U}_n(t-1,k)$ and by Lemma \ref{lem:subgraph}, we have $\rho_\alpha(U_{n-1}(t-1,k)) \le \rho_\alpha(G_1)$. Now, by Lemma \ref{lem:uni}, it follows that $\rho_\alpha(G_1) \le \rho_\alpha(U_n(t-1,k))$. This completes the proof.
\end{proof}

Now, we characterize the unicyclic graphs with maximum $A_\alpha$-spectral radius in $\mathscr{U}_n(k)$.

\begin{theorem}
Let $G$ be a unicyclic graph of order $n$ with $k$ pendant vertices, where $1 \le k \le n-3$. Then $$\rho_\alpha(G)\leq \rho_\alpha(U_n(3,k))$$ 
 and the equality holds if and only if $G \cong U_n(3,k)$.
\end{theorem}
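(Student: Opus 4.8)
The plan is to assemble Lemma~\ref{lem:uni} and Lemma~\ref{lem:uni2}, which together do essentially all the work. First I would record a structural observation: in a unicyclic graph every vertex of the unique cycle has degree at least $2$, so all $k$ pendant vertices of $G$ lie off the cycle. Hence, writing $t$ for the girth of $G$, we have $n \ge t + k$, so that $3 \le t \le n-k$; in particular $G \in \mathscr{U}_n(t,k)$ and the graph $U_n(t,k)$ is well defined for this value of $t$.

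Applying Lemma~\ref{lem:uni} then gives $\rho_\alpha(G) \le \rho_\alpha(U_n(t,k))$, with equality if and only if $G \cong U_n(t,k)$. Iterating Lemma~\ref{lem:uni2} downward from cycle length $t$ to cycle length $3$ yields
$$\rho_\alpha(U_n(t,k)) \le \rho_\alpha(U_n(t-1,k)) \le \hdots \le \rho_\alpha(U_n(3,k)),$$
where this chain is nonempty, and hence by Lemma~\ref{lem:uni2} a strict decrease, precisely when $t \ge 4$. Consequently $\rho_\alpha(U_n(t,k)) \le \rho_\alpha(U_n(3,k))$ with equality if and only if $t = 3$. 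Chaining the two bounds gives $\rho_\alpha(G) \le \rho_\alpha(U_n(3,k))$.

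For the equality characterization, suppose $\rho_\alpha(G) = \rho_\alpha(U_n(3,k))$. Then both of the bounds above must be tight; tightness of the $U_n(t,k)$-chain forces $t = 3$, and then tightness of the Lemma~\ref{lem:uni} bound forces $G \cong U_n(3,k)$. Conversely, $U_n(3,k) \in \mathscr{U}_n(k)$ — here the hypothesis $k \le n-3$ is exactly what guarantees this graph exists — so the bound is attained. I do not expect a substantive obstacle in this argument; the only point needing care is the girth bookkeeping, namely confirming that $t \le n-k$ so that $U_n(t,k)$ is defined and the lemmas apply, which is immediate from the structural observation in the first paragraph.
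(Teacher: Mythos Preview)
Your proposal is correct and follows essentially the same approach as the paper: apply Lemma~\ref{lem:uni} to pass from $G$ to $U_n(t,k)$, then iterate Lemma~\ref{lem:uni2} to reduce the cycle length to $3$. Your write-up is in fact more careful than the paper's own proof, which omits both the girth bookkeeping $t\le n-k$ and the explicit equality-case argument.
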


\begin{proof}
	Since $G \in \mathscr{U}_n(k)$, there exists an integer $t \ge 3$ such that $G \in \mathscr{U}_n(t,k)$. By Lemma~\ref{lem:uni}, we have $\rho_\alpha(G)\leq \rho_\alpha(U_n(t,k))$. Now, by repeatedly applying the Lemma \ref{lem:uni2} to $U_n(t,k)$, one eventually reaches to the graph $U_n(3,k)$ and $\rho_\alpha(U_n(t,k))\leq \rho_\alpha(U_n(3,k))$. Thus, $\rho_\alpha(G)\leq \rho_\alpha(U_n(3,k))$.
\end{proof}

%%%%%%%%%%%%%%%%%%%%%%%%%%%%%%%%%%%%%%%%%%%%%%%%%%%%%%%%%%%%%%%%%%%%%

\section{Bicyclic graphs with $n$ vertices and girth $g$ or $k$ pendant vertices}

In this section, we characterize the extremal bicyclic graphs with maximum $A_\alpha$-spectral radius among $\mathcal{B}_n(g)$ and $\mathscr{B}_n(k)$, respectively. For a bicyclic graph $G$, let $B(G)$ be the minimal bicyclic subgraph of $G$. That is, $B(G)$ is the unique bicyclic subgraph of $G$ without pendant vertices, and $G$ can be derived from $B(G)$ by attaching trees to some vertices of $B(G)$. 

\subsection{Extremal bicyclic graphs in $\mathcal{B}_n(g)$}
In this subsection, we determine the unique graphs with maximum $A_\alpha$-spectral radius among the class of all bicyclic graphs of order $n$ with fixed girth $g$. Let $\mathcal{B}_n^{1}(g)\in \mathcal{B}_n(g)$ such that the intersection of the two cycles is at most one vertex, and let $\mathcal{B}_n^{2}(g)\in \mathcal{B}_n(g)$ such that the intersection of the two cycles is at least one edge. Note that $\mathcal{B}_n(g) = \mathcal{B}_n^{1}(g) \cup \mathcal{B}_n^{2}(g)$. 

Let $B_n^1(p,t,q)\in \mathcal{B}_n^{1}(g)$ be a class of bicyclic graphs with the cycles of length $p,q$ and there is a path of length $t-1$ between the cycles, where $t \ge 1$. Let $B_n^1(g)$ denote the class of bicyclic graphs, where both the cycles are of length $g$ with exactly one common vertex which is adjacent to $n-2g+1$ pendant vertices.

\begin{theorem}\label{thm:bicyclic-girth}
	Let $n,g$ be integers with $3 \le g \le \frac{n+1}{2}$. If $G \in  \mathcal{B}_n^{1}(g)$, then $$\rho_\alpha(G) \leq \rho_\alpha(B_n^1(g))$$ and the equality holds if and only if $G \cong B_n^1(g)$.
\end{theorem}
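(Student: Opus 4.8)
The plan is to mimic the structure of the proof of Theorem~\ref{thm:uni-girth}, pushing all attached trees onto the "best" vertex of the minimal bicyclic subgraph and then converting pendant paths into pendant edges. First I would fix $G \in \mathcal{B}_n^1(g)$ with maximal $A_\alpha$-spectral radius, let $\mathbf{x}$ be its Perron vector, and let $B(G)$ be the minimal bicyclic subgraph, which in the $\mathcal{B}_n^1(g)$ case consists of two cycles $C_p$ and $C_q$ joined by a path of length $t-1$ (possibly $t=1$, i.e.\ sharing one vertex). Since $3 \le g \le \frac{n+1}{2}$, the girth constraint forces $\min(p,q)=g$. As in Theorem~\ref{thm:uni-girth}, I would first argue that every tree hanging off $B(G)$ is a single pendant path: if some attached tree has a branch vertex $v$ of degree $\ge 3$, then comparing $x_v$ with the Perron entry of a suitable vertex of $B(G)$ and applying Lemma~\ref{lem:edge} (to move the offending edges), together with Lemma~\ref{lem:subdiv} on internal paths and Lemma~\ref{lem:subgraph} to restore the vertex count, yields a graph in $\mathcal{B}_n^1(g)$ with strictly larger $\rho_\alpha$, a contradiction. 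Then Lemma~\ref{lem:path2} (and its obvious analogue when the root lies on one of the two cycles) lets me replace every pendant path by pendant edges, so the extremal graph has only pendant edges attached to $B(G)$.

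Next I would collapse all these pendant edges onto one vertex. Using Lemma~\ref{lem:edge} repeatedly: if pendant edges are attached at two distinct vertices $u,w$ with $x_u \ge x_w$, move the pendant neighbours of $w$ to $u$; this stays in $\mathcal{B}_n^1(g)$ and strictly increases $\rho_\alpha$. After finitely many steps all pendant vertices sit at a single vertex $z$ of $B(G)$. Now I need to reduce $B(G)$ itself to the "figure-eight" shape where $p=q=g$, $t=1$, and the common vertex is $z$. For this I would use Lemma~\ref{lem:subdiv}: if $t \ge 2$, the connecting path is an internal path of the graph obtained by adjusting lengths, so subdividing/contracting it strictly increases $\rho_\alpha$, after which Lemma~\ref{lem:subgraph} restores $n$; this forces $t=1$. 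Once the two cycles share exactly one vertex, I would argue with Lemma~\ref{lem:coal} that the shared vertex must be the one carrying the pendant vertices: writing $G = G_1(v_1,u)G_2$ where $G_1$ is the two-cycle-plus-pendants configuration, moving the coalescence point to the cycle-intersection vertex (which dominates in the $N_{G_1}$-neighbourhood-containment sense once pendants are present) increases $\rho_\alpha$. Finally, with $t=1$ and both cycles through $z$, the girth bound and an internal-path argument (Lemma~\ref{lem:subdiv} applied to the longer cycle, balancing with Lemma~\ref{lem:subgraph}) forces $p=q=g$, giving exactly $B_n^1(g)$.

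The main obstacle I anticipate is the bookkeeping around the two cycles and the connecting path: unlike the unicyclic case there is no single canonical cycle, so I must carefully track which vertex plays the role of $v_1$ and ensure each transformation genuinely keeps the graph in $\mathcal{B}_n^1(g)$ (in particular that I never accidentally create a bridge-edge cycle, collapse a cycle below length $g$, or land in $\mathcal{B}_n^2(g)$). The subdivision/contraction steps in Lemmas~\ref{lem:subdiv} change $n$ by one, so each must be paired with a compensating Lemma~\ref{lem:subgraph} step plus an appeal to an already-established maximality lemma (analogous to Lemma~\ref{lem:uni}) to climb back to the right order; setting up the correct intermediate "$U_n(t,k)$-type" lemmas for the bicyclic family is the real work. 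The other delicate point is the case $x_1 < x_v$ (a tree branch vertex has larger Perron entry than the chosen cycle vertex), which in the unicyclic proof required the intricate Subcase~I/II analysis; here I would reuse essentially the same trick, choosing $v$ to minimize its distance to $B(G)$ so that the path from $B(G)$ to $v$ becomes an internal path after a minor surgery, letting Lemma~\ref{lem:subdiv} finish the job. Everything else is a routine application of Lemmas~\ref{lem:edge}, \ref{lem:path2}, \ref{lem:coal}, and the chain of inequalities from the preliminaries.
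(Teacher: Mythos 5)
Your overall plan---a chain of local transformations driven by Lemmas~\ref{lem:edge}, \ref{lem:subdiv}, \ref{lem:subgraph} and \ref{lem:path2}---is the same strategy the paper uses, but two of your specific reductions invoke lemmas whose hypotheses are not met. First, you propose to force $t=1$ by contracting the connecting path via Lemma~\ref{lem:subdiv} (paired with Lemma~\ref{lem:subgraph} to restore the order). Contracting internal degree-$2$ vertices does take you from any $t\ge 3$ down to $t=2$, but at $t=2$ the connection is a single bridge edge whose two endpoints both have degree $3$: there is no degree-$2$ vertex left to remove, and the $t=2$ and $t=1$ configurations are not related by an edge subdivision, so Lemma~\ref{lem:subdiv} cannot finish this step. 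The paper instead applies Lemma~\ref{lem:edge} once: assuming (say) $x_1\ge x_t$ for the two cycle-attachment vertices, it reroutes a $C_q$-edge of $v_t$ onto $v_1$, which handles every $t\ge 2$ and lands back in $\mathcal{B}_n^1(g)$ because the new long cycle has length $q+t-1\ge g$.

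Second, your use of Lemma~\ref{lem:coal} to slide the pendant star from its current host $z$ to the cycle-intersection vertex $c$ requires $N_{G_1}(z)\setminus\{c\}\subset N_{G_1}(c)\setminus\{z\}$. When $g\ge 4$ and $z$ is a degree-$2$ vertex of the figure-eight, the neighbours of $z$ are themselves degree-$2$ cycle vertices which are not neighbours of $c$, so this containment fails (it holds only in the triangle case $g=3$). The paper avoids this with a two-sided Perron-entry comparison and Lemma~\ref{lem:edge}: if $x_c\ge x_z$ it moves the non-cycle neighbours of $z$ to $c$; if $x_z>x_c$ it moves the non-$C_g$ neighbours of $c$ (including its two $C_q$-neighbours) to $z$, which re-roots the second cycle at $z$ and keeps the graph in $\mathcal{B}_n^1(g)$. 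Both gaps are repairable by substituting these Lemma~\ref{lem:edge} arguments; your remaining steps (attached trees are paths, paths become pendant edges via Lemma~\ref{lem:path2}, shrinking the longer cycle to length $g$ as in Lemma~\ref{lem:uni2}) are sound and agree with the paper's proof.
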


\begin{proof}
Let $G \in \mathcal{B}_n^{1}(g)$ be a graph such that the 
$A_\alpha$-spectral radius of $G$ is as large as possible. Therefore, $G \in B_n^1(p,t,q)$ for some $t \ge 1$ and $g=p\leq q$. Let $V(G) = \{v_1,v_2,\hdots,v_n\}$ be the vertex set of $G$ and let $\mathbf{x} = (x_1,\hdots,x_n)^t$ be the Perron vector of $A_\alpha(G)$, where $x_i$ corresponds to the vertex $v_i$. 
	First, we claim that $t = 1$, that is, in the maximal graph $G$ the two cycles share exactly one common vertex. If not, then there exists a path of length $t-1$ with starting vertex $v_1 \in C_g$ and ending vertex $v_t \in C_q$. Without loss of generality, we assume that $x_1 \ge x_t$. Now, consider a graph $G_1$ such that $G_1=G-v_tv_{t+1}+v_1v_{t+1}$, where $v_{t+1}$ is the neighbour of $v_t$ in $C_q$. Then, by Lemma \ref{lem:edge}, we have $\rho_\alpha(G) > \rho_\alpha(G_1)$, which is a contradiction.
	
	Next, we claim that if there is any tree $T$ in $G$, then $T$ is attached to $v_1$, the common vertex of the cycles $C_g$ and $C_q$. If not, then there exists a vertex $v_i$ in $C_g$ such that a tree $T_i$ is attached to $v_i$. If $x_1 \ge x_i$, then consider $N = N(v_i) \setminus V(C_g)$ and by using Lemma~\ref{lem:edge}, we get a contradiction. Similarly, if $x_i > x_1$, then choose $N = N(v_1) \setminus V(C_g)$ and by applying Lemma~\ref{lem:edge}, we get a contradiction. 
	
	Now, we claim that all trees attached to $v_1$ are paths. If not, there exists a vertex $v$ on a tree such that $d(v) \ge 3$. If $x_1 \ge x_v$, then the proof is same as the proof of Case $1$ of Lemma~\ref{lem:uni}. Suppose that $x_v > x_1$. Let $v_2,v_3$ be the vertices adjacent to $v_1$ in $C_p$ and $v_4$ be the vertex adjacent to $v_1$ on the path from $v_1$ to $v$. Then, consider $N = N(v_1) \setminus \{v_2,v_3\}$ and applying Lemma~\ref{lem:edge}, we get a contradiction.
	
	Thus, the maximal graph $G$ has two cycles $C_g$ and $C_q$ with exactly one common vertex $v_1$ and there are some paths attached to the vertex $v_1$. Again, by the same argument as in Lemma~\ref{lem:uni2}, we can reduce the size of the larger cycle $C_q$ to $g$. Finally, by using Lemma~\ref{lem:path-pendant}, we can conclude that all the paths attached to $v_1$ are pendant edges. Thus, $G \cong B_n^1(g)$.
\end{proof}

Let $P_{p+1}, P_{q+1}, P_{r+1}$ be three vertex disjoint paths with $p,q,r \ge 1$ and at most one of $p,q,r$ is $1$. Let $C(p,q,r)$ be the graph obtained by identifying the end vertices of the paths $P_{p+1}, P_{q+1}$ and $P_{r+1}$ on both sides. Let $C_{p,q,r}^t$ be the bicyclic graph obtained from $C(p,q,r)$ by attaching $t$ pendant vertices to a vertex of degree $3$.

\begin{theorem}\label{thm:bi-2-unique}
	Let $n,g$ be integers such that $\lceil \frac{3g}{2} \rceil - 1 \le n$ and $G^*$ be the graph with maximal $A_\alpha$-spectral radius in $\mathcal{B}_n^2(g)$ for $\frac{1}{2}\leq \alpha <1$. Then $G^*$ is obtained from $C(\lfloor \frac{g}{2} \rfloor, \lceil \frac{g}{2} \rceil, \lceil \frac{g}{2} \rceil )$ by attaching $n - \lceil \frac{3g}{2} \rceil + 1$ pendant edges to a unique vertex.
\end{theorem}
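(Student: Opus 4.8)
The plan is to mimic the structure of the proof of Theorem~\ref{thm:bicyclic-girth}, pushing all the ``mass'' of the graph onto a single high-degree vertex and shrinking the cyclic part down to the minimum allowed by the girth constraint $g$. Let $G^* \in \mathcal{B}_n^2(g)$ be a graph with maximal $A_\alpha$-spectral radius, and let $\mathbf{x}$ be its Perron vector. Since $G^* \in \mathcal{B}_n^2(g)$, its minimal bicyclic subgraph $B(G^*)$ is a $\theta$-graph, i.e.\ $B(G^*) = C(p,q,r)$ for some $p \le q \le r$ with at most one of them equal to $1$; the girth is $g = $ (the two shortest cycle lengths among $p+q$, $p+r$, $q+r$)$= p+q$. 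I would first argue, exactly as in Theorem~\ref{thm:bicyclic-girth}: (i) every tree hanging off $G^*$ can be moved to a single vertex $w$ of $B(G^*)$ of maximum Perron weight, using Lemma~\ref{lem:edge} (comparing $x_w$ against $x_{v_i}$ for any other attachment vertex $v_i$, and swapping the neighbourhood $N = N(v_i) \setminus V(B(G^*))$, or symmetrically if $x_{v_i} > x_w$ using $N(w) \setminus V(B(G^*))$); (ii) each such attached tree is in fact a pendant path, since a branch vertex $v$ with $d(v) \ge 3$ either satisfies $x_w \ge x_v$ (move a sub-branch to $w$ via Lemma~\ref{lem:edge}, as in Case~1 of Lemma~\ref{lem:uni}) or $x_v > x_w$ (move the two $B(G^*)$-neighbours of $w$ onto $v$ via Lemma~\ref{lem:edge} with $N = N(w) \setminus V(B(G^*))$), both giving a strictly larger $\rho_\alpha$; and (iii) the attachment vertex $w$ must be one of the two degree-$3$ vertices of the $\theta$-graph $C(p,q,r)$ — if the trees were attached to an interior (degree-$2$) vertex of one of the three internal paths, Lemma~\ref{lem:coal} (or again Lemma~\ref{lem:edge}) lets us slide the whole pendant structure along that path towards a degree-$3$ endpoint, strictly increasing $\rho_\alpha$. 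At this stage $G^* = C_{p,q,r}^t$ with $t = n - (p+q+r+1)$ pendant edges at a degree-$3$ vertex.

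The second part is to pin down the three path lengths $p \le q \le r$. The girth constraint forces $p + q = g$. I would next show $r = q$, i.e.\ the two non-girth-defining paths are made as short as possible: the path $P_{r+1}$ (with $r > q$) contains an internal path of $C(p,q,r)$ between its two degree-$3$ endpoints, so subdividing fewer times — equivalently, contracting one edge of $P_{r+1}$ to get $C(p,q,r-1)$ on $n-1$ vertices — strictly increases $\rho_\alpha$ by Lemma~\ref{lem:subdiv}; then reattach the removed vertex as a pendant and invoke the monotonicity already established (Lemma~\ref{lem:uni}-style, via the analogue ``$C_{p,q,r}^t$ is extremal in its subclass'') to land in $\mathcal{B}_n^2(g)$ with a strictly larger radius. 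This forces $r = q$, hence $\{p, q, r\} = \{p, q, q\}$ with $p + q = g$. Finally, among $p + q = g$ with $p \le q$, I would show the balanced choice $p = \lfloor g/2 \rfloor$, $q = \lceil g/2 \rceil$ is optimal: here I would use Lemma~\ref{lem:path}, viewing the two internal paths forming the girth cycle $C_g$ through $w$ and the opposite degree-$3$ vertex $w'$ as pendant-path-like structures and balancing their lengths (if $q - p \ge 2$, transferring a vertex from the longer arc to the shorter one strictly increases $\rho_\alpha$); one has to be slightly careful because these are internal, not pendant, paths, so I would instead appeal to the symmetry/balancing principle behind Lemma~\ref{lem:path} applied along the cycle $C_g = P_{p+1} \cup P_{q+1}$ with the two degree-$3$ vertices $w, w'$ playing the roles of the two roots. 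This yields $C(\lfloor g/2 \rfloor, \lceil g/2 \rceil, \lceil g/2 \rceil)$, and counting vertices, $t = n - (\lfloor g/2 \rfloor + 2\lceil g/2 \rceil + 1) = n - \lceil 3g/2 \rceil + 1 - 1$... more precisely $\lfloor g/2\rfloor + \lceil g/2\rceil = g$ and $\lceil g/2 \rceil$ extra, so the cyclic part has $g + \lceil g/2 \rceil = \lceil 3g/2 \rceil$ vertices, giving $t = n - \lceil 3g/2 \rceil + 1$ pendant edges.

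The role of the hypothesis $\tfrac12 \le \alpha < 1$ deserves a word: it is presumably needed precisely in the balancing step (comparing $C(\lfloor g/2\rfloor, \lceil g/2\rceil, \lceil g/2\rceil)$ against other $C(p,q,q)^t$), where the available monotonicity lemma (Lemma~\ref{lem:path}) or its internal-path analogue may only be known, or only hold, for $\alpha$ in this range — or it may be that for $\alpha < \tfrac12$ the extremal graph genuinely differs (e.g.\ a different distribution between cycle length and pendant count can be favourable when the adjacency part dominates). So I would flag that the restriction enters only in the last comparison and state the intermediate reductions (trees $\to$ paths $\to$ pendant edges at a degree-$3$ vertex, and $r = q$) for all $\alpha \in [0,1)$.

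I expect the \emph{main obstacle} to be the final balancing step: Lemma~\ref{lem:path} as stated is about \emph{pendant} paths attached at $u$ and $v$ with $uv \in E(G)$, whereas here the two arcs $P_{p+1}, P_{q+1}$ forming $C_g$ are an \emph{internal} cycle joining the two branch vertices $w$ and $w'$, and there is no edge $ww'$. Making the balancing rigorous will require either a separate lemma (a ``two-rooted internal path'' analogue of Lemma~\ref{lem:path}, proved by the usual Perron-vector edge-shifting argument of Lemma~\ref{lem:edge} combined with a careful sign analysis along the cycle) or a clever reduction to the coalescence lemma~\ref{lem:coal}. This is also the step where the hypothesis $\alpha \ge \tfrac12$ is most likely to be genuinely used, so extra care is warranted there; the rest of the argument is a routine, if lengthy, repetition of the techniques already deployed in Theorems~\ref{thm:uni-girth} and~\ref{thm:bicyclic-girth}.
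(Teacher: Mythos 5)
Your reductions in the first part (trees to one vertex, trees to paths, paths to pendant edges) and your contraction argument for $r=q$ via Lemma~\ref{lem:subdiv} are sound, but the step you yourself flag as the ``main obstacle'' --- pinning down the arc lengths as $\lfloor g/2\rfloor,\lceil g/2\rceil,\lceil g/2\rceil$ --- is left genuinely unresolved, and the tools you point to would not close it. You frame this step as a local balancing of the two arcs of the girth cycle at constant total length, in the spirit of Lemma~\ref{lem:path}. But that is not what the extremal transition is: passing from $C(p,q,q)^t$ with $p+q=g$, $p<\lfloor g/2\rfloor$, to the balanced $C(\lfloor g/2\rfloor,\lceil g/2\rceil,\lceil g/2\rceil)^{t'}$ \emph{shrinks} the whole theta-subgraph (from $g+q$ edges in the cyclic part down to $\lceil 3g/2\rceil$ vertices) and converts the saved vertices into extra pendant edges, so $t'>t$. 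No transfer of vertices between the two girth arcs, and no internal-path analogue of Lemma~\ref{lem:path}, produces this; and Lemma~\ref{lem:coal} does not apply either, since its hypothesis $N(v_1)\setminus\{v_2\}\subset N(v_2)\setminus\{v_1\}$ fails for vertices on internal paths of length greater than $2$.

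The paper closes this gap by an entirely global argument, and this is where the hypothesis $\tfrac12\le\alpha<1$ actually enters (not merely in a final local comparison, as you suggest). For $n$ large enough that pendant vertices exist, any unbalanced choice of $(p,q,r)$ forces the pendant count $k=n-(p+q+r)+1$ to drop; the paper then bounds $\rho_\alpha(G^*)$ from above by $\max_u\{\alpha d(u)+(1-\alpha)m(u)\}$ (Lemma~\ref{lem:d(u)-m(u)}), which is increasing in $k$, and bounds the balanced candidate from below by $\alpha\Delta+\frac{(1-\alpha)^2}{\alpha}$ (Lemma~\ref{lem:spec-Delta}); the comparison reduces to $\frac{4\alpha(1-\alpha)}{3\alpha^2-3\alpha+1}\le k+\text{const}$, which needs $\alpha\ge\tfrac12$. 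The boundary cases $n=\lceil\frac{3g}{2}\rceil-1$ and $n=\lceil\frac{3g}{2}\rceil$ (where both candidates have zero or one pendant vertex and the counting argument gives nothing) are handled separately, partly by explicit characteristic-polynomial computations for small $g$ --- your proposal does not address these at all. So the overall architecture you propose (local edge-shifting all the way down) diverges from the proof that actually works, and the missing balancing step is not a technicality but the heart of the theorem.
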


\begin{proof}
We divide the proof into two cases when $g$ is even or $g$ is odd.
	
\textbf{Case 1.} Let $g$ be even. Assume that $g = 2a$ for some integer $a$. Let $B(G^*) = C(p,q,r)$, where $p+q = g = 2a$ and $p \le q \le r$. It is sufficient to prove that $p = q = r = a$. Note that $p+ q+r \ge 3a$ and $n \ge (p+q+r)-1$, that is, $n \ge 3a-1$.
	
If $n = 3a-1$, then $G^*$ does not have any pendant vertices and $p+q+r = 3a = g+a$. Hence, by using the relations $p + q = g$ and $p \le q \le r$, we have $p = q= r = a$.

If $n = 3a$, then $p+q+r \le 3a+1$, which implies that $r \le a+1$. Hence, $(p,q,r) \in \{(a-1,a+1,a+1),(a,a,a),(a,a,a+1)\}$. If $(p,q,r) = (a-1,a+1,a+1)$ or $(a,a,a+1)$, then $G^*$ does not have any pendant vertices.
	\begin{itemize}
		\item[(i)] If $a = 2$, then $(p,q,r) = \{(2,2,2),(2,2,3),(1,3,3)\}$. By Lemma~\ref{lem:subdiv} and Lemma~\ref{lem:subgraph}, it follows that $\rho_\alpha(C(2,2,3))<\rho_\alpha(C(2,2,2)) < \rho_\alpha(C_{2,2,2}^1)$, a contradiction. Again, By direct computations, we can find that $\rho_\alpha(C(1,3,3))=\frac{3\alpha+2+\sqrt{9\alpha^2-16\alpha+8}}{2}$
   and the $A_\alpha$-spectral radius of $C_{2,2,2}^1$ is the largest root of the polynomial 
   \begin{equation*}
    f(x)=x^4-10\alpha x^3+(28\alpha^2+14\alpha-7)x^2-(18\alpha^3+64\alpha^2-32\alpha)x+42\alpha^3-9\alpha^2-12\alpha+3.
   \end{equation*}
   Now, one can check that
   \begin{align*}
    & f\big(\rho_\alpha(C(1,3,3))\big)= f\bigg(\frac{3\alpha+2+\sqrt{9\alpha^2-16\alpha+8}}{2}\bigg)\\
    &=\frac{1}{2}\Big((9\alpha^4-20\alpha^3+11\alpha^2+2\alpha-2)+(3\alpha^3-4\alpha^2+3\alpha-2)\sqrt{9\alpha^2-16\alpha+8}\Big)\\
    &=\frac{1}{2}(\alpha-1)\Big((\alpha-1)(9\alpha^2-2\alpha-2)+(3\alpha^2-\alpha+2)\sqrt{9\alpha^2-16\alpha+8}\Big)\\
    &<0,
   \end{align*}
which implies that  $\rho_\alpha(C(1,3,3)) < \rho_\alpha(C_{2,2,2}^1)$, a contradiction. 
	\item[(iii)] If $a \ge 3$, then $G^*$ cannot have an edge between the two vertices of degree $3$ in $C(p,q,r)$. Hence, by Lemma~\ref{lem:d(u)-d(v)}, we have 
 \begin{align*}
  \rho_\alpha(G^*) < \max_{u \sim v} \{\alpha d(u) + (1-\alpha) d(v)\} = \max \{2+\alpha, 3-\alpha\}= 2+\alpha ~~ \text{for} ~~\frac{1}{2} \le \alpha < 1.  
 \end{align*}

    On the other hand, the maximum degree of $C_{a,a,a}^1$ is $4$ and hence by Lemma~\ref{lem:spec-Delta}, it follows that $\rho_\alpha(C_{a,a,a}^1)\geq 4 \alpha + \frac{(1-\alpha)^2}{\alpha}\geq 2 + \alpha$ for $\alpha \in[\frac{1}{2},1)$. Thus, $\rho_\alpha(G^*) < \rho_\alpha(C_{a,a,a}^1)$, a contradiction.
\end{itemize}
 Hence, $p = q = r = a$ for $n=3a$.

	If $n \ge 3a+1$ and $r = a$, then using the fact $p+q = 2a$ and $p \le q \le r$, we have $p = q = a$. Let $k = n -(p+q+r) +1$ be the number of pendant vertices in $G^*$. If $r \ge a+1$, then $k \le n-3a$. Note that, for a fixed value of $k$, $\max\{\alpha d(u)+(1-\alpha) m(u)\}$ is attained when the two degree $3$ vertices are adjacent and $k$ pendant edges are attached to a vertex of degree $3$ in $C(p,q,r)$. Since $G^*$ is not regular, by Lemma \ref{lem:d(u)-m(u)}, we have
	\begin{equation*}
		\rho_\alpha(G^*) < \max_{u} \{\alpha d(u)+(1-\alpha) m(u)\} = \alpha (k+3) + (1-\alpha) \frac{k+7}{k+3}.
	\end{equation*} 
 Observe that $f(x) = \alpha (x+3) + (1-\alpha) \dfrac{x+7}{x+3}$ is an increasing function in $x$ for $x \ge 0$ and $\frac{1}{2} \le \alpha < 1$. Therefore,
	\begin{align*}
		\rho_\alpha(G^*) < \alpha (n-3a+3) + (1-\alpha) \frac{n-3a+7}{n-3a+3} & = \alpha (n-3a+3) + (1-\alpha) + (1-\alpha) \frac{4}{n-3a+3}\\
        &= 1 + \frac{4}{n-3a+3} + \alpha \left(n-3a+2 - \frac{4}{n-3a+3}\right). 
	\end{align*} 
 Now, we claim that 
 $$1 + \frac{4}{n-3a+3} + \alpha \left(n-3a+2 - \frac{4}{n-3a+3}\right) \le \alpha(n-3a+4) + \frac{(1-\alpha)^2}{\alpha},$$ which is equivalent to $$\frac{4\alpha(1-\alpha)}{ 3\alpha^2 -3\alpha+1} \le n-3a+3.$$ 
 Note that $g(\alpha) = \frac{4\alpha(1-\alpha)}{ 3\alpha^2 -3\alpha+1}$ is a decreasing function of $\alpha$ for $\frac{1}{2} \le \alpha < 1$. Therefore, $g(\alpha)\leq g(\frac{1}{2})=4$.  Since $n \ge 3a+1$, we have $4 \le n-3a+3$ and hence the claim is true. Now, by Lemma~\ref{lem:spec-Delta}, we have
 \begin{align*}
 \rho_\alpha \big(C_{a,a,a}^{n-3a+1}\big)\geq \alpha \Delta \big(C_{a,a,a}^{n-3a+1}\big) + \frac{(1-\alpha)^2}{\alpha}=\alpha(n-3a+4)  + \frac{(1-\alpha)^2}{\alpha} ~~ \text{for}~~\frac{1}{2}\leq \alpha <1.    
 \end{align*}
  Thus, $\rho_\alpha(G^*) < \rho_\alpha(C_{a,a,a}^{n-3a+1})$, which is a contradiction.\\
	
\textbf{Case 2.} Let $g$ be odd. Assume that $g = 2a+1$ for some integer $a$. Let $B(G^*) = C(p,q,r)$, where $p+q = g = 2a+1$ and $p \le q \le r$. It is sufficient to prove that $p = a$ and $q = r = a+1$. Note that $p+ q+r \ge 3a+2$ and $n \ge (p+q+r)-1 = 3a+1$.
	
	If $n = 3a+1$, then $G^*$ does not contain any pendant vertices, and $p+q+r = 3a+2$. Since $p+q = 2a+1$, $r = a+1$ and $p \le q \le r$, we must have $p = a$ and $q = r = a+1$.
	
	If $n = 3a+2$, then $p+q+r \le 3a+3$. Since $p+q = g$ and $r \le a+2$, we have $q \le a+2$ and $p \ge a-1$ for $a \ge 2$. Hence, 
 $$(p,q,r) \in \{(a,a+1,a+1),(a,a+1,a+2),(a-1,a+2,a+2)\}.$$ 
 If $(p,q,r) = (a,a+1,a+2)$ or $(a-1,a+2,a+2)$, then $G^*$ does not have any pendant vertices. 
	\begin{itemize}
		\item[(i)] If $a = 1$, then $(p,q,r) \in \{(1,2,2),(1,2,3)\}$. By Lemma~\ref{lem:subdiv}, it follows that $\rho_\alpha(C(1,2,3)) < \rho_\alpha(C(1,2,2))$ and by Lemma~\ref{lem:subgraph}, we have $\rho_\alpha(C(1,2,2)) < \rho_\alpha(C_{1,2,2}^1)$, a contradiction.
		\item[(ii)] If $a = 2$, then $(p,q,r) \in \{(2,3,4),(1,4,4)\}$. By Lemma~\ref{lem:subdiv} and Lemma~\ref{lem:subgraph}, it follows that $\rho_\alpha(C(2,3,4)) <\rho_\alpha(C(2,3,3)) < \rho_\alpha(C_{2,3,3}^1)$, a contradiction. Again, by Lemma~\ref{lem:subdiv}, we have $\rho_\alpha(C(1,4,4)) <\rho_\alpha(C(1,3,3))$. Now, by direct computation, we can find that the $A_\alpha$-spectral radius of $C_{2,3,3}^1$ is the largest root of the polynomial
    \begin{align}
     g(x)&=x^6-14\alpha x^5+(71\alpha^2+16\alpha-8)x^4-(160\alpha^3+140\alpha^2-70\alpha)x^3+(160\alpha^4+380\alpha^3 \nonumber\\
     &~~~-134\alpha^2-56\alpha+14)x^2-(56\alpha^5+392\alpha^4-26\alpha^3-160\alpha^2+30\alpha+4)x+126\alpha^5 \nonumber\\
    &~~~+59\alpha^4-104\alpha^3+6\alpha^2+10\alpha-1. \label{eqn1}   
    \end{align}
    Now, one can verify that
     \begin{align*}
    & g\big(\rho_\alpha(C(1,3,3))\big)= g\bigg(\frac{3\alpha+2+\sqrt{9\alpha^2-16\alpha+8}}{2}\bigg)\\
    &=\frac{1}{2}\Big((30\alpha^6-132\alpha^5+249\alpha^4-250\alpha^3+133\alpha^2-30\alpha)\\
     &~~~~~~~~+(10\alpha^5-32\alpha^4+413\alpha^3-26\alpha^2+9\alpha-2)\sqrt{9\alpha^2-16\alpha+8}\Big)\\
    &=\frac{1}{2}(\alpha-1)\Big(\alpha(30\alpha^4-102\alpha^3+147\alpha^2-103\alpha+30)\\
     &~~~~~~~~~~~~~~~~~+(10\alpha^4-22\alpha^3+19\alpha^2-7\alpha+2)\sqrt{9\alpha^2-16\alpha+8}\Big)\\
    &<0,
   \end{align*}
which implies that  $\rho_\alpha(C(1,3,3)) < \rho_\alpha(C_{2,3,3}^1)$. Thus, we have $\rho_\alpha(C(1,4,4)) <\rho_\alpha(C(1,3,3)) < \rho_\alpha(C_{2,3,3}^1)$, a contradiction.
	\item[(iii)] If $a \ge 3$, then $G^*$ cannot have an edge between the two vertices of degree $3$ in $C(p,q,r)$. Thus, using Lemma~\ref{lem:d(u)-d(v)} and the fact $\frac{1}{2} \le \alpha < 1$, we have $$\rho_\alpha(G^*) < \max_{u \sim v}\{\alpha d(u) + (1-\alpha) d(v)\} = \max \{2+\alpha,3-\alpha\} = 2+\alpha \le 4 \alpha + \frac{(1-\alpha)^2}{\alpha}.$$
  Since $C_{a,a+1,a+1}^1$  has maximum degree $\Delta=4$, by Lemma~\ref{lem:spec-Delta}, we have $$4 \alpha + \frac{(1-\alpha)^2}{\alpha} =  \alpha \Delta + \frac{(1-\alpha)^2}{\alpha} \le  \rho_\alpha(C_{a,a+1,a+1}^1).$$ 
  Hence, $\rho_\alpha(G^*) <  \rho_\alpha(C_{a,a+1,a+1}^1)$, a contradiction.
\end{itemize}
Thus, for $n = 3a+2$, we have $p = a$ and $q = r = a+1$.

Finally, we consider the case when $n \ge 3a+3$. If $r = a+1$, then using the fact $p+q = g= 2a+1$ and $p \le q \le r$ we have $p = a-1$ and $q = a+1$. Next, we assume that $r \ge a+2$ and $k = n- (p+q+r) +1$ be the number of pendant vertices in $G^*$, which implies that $k \le n-(3a+2)$. Note that the quantity $\alpha d(u) + (1-\alpha) m(u)$ attains its maximum when $p = 1$ and $k$ pendent edges are incident to a degree $3$ vertex of $C(p,q,r)$. Thus, for a fixed $k$, by Lemma~\ref{lem:d(u)-m(u)}, we have $$\rho_\alpha(G^*) < \max_{u} \{\alpha d(u) + (1-\alpha) m(u)\} = \alpha (k+3) + (1-\alpha) \frac{k+7}{k+3},$$ since $G^*$ is not regular. Since $f(x) = \alpha (x+3) + (1-\alpha) \dfrac{x+7}{x+3}$ is an increasing function in $x$ for $x \ge 0$ and $\frac{1}{2} \le \alpha < 1$, we have 
 \begin{align*}
    \rho_\alpha(G^*) < \alpha(n-3a+1) + (1-\alpha) \frac{n-3a+5}{n-3a+1} &= \alpha(n-3a+1) + (1-\alpha) + (1-\alpha)\frac{4}{n-3a+1}\\ 
    &= 1+\frac{4}{n-3a+1} + \alpha \left(n-3a - \frac{4}{n-3a+1}\right)
 \end{align*}
Now, we claim that $$1+\frac{4}{n-3a+1} + \alpha \left(n-3a - \frac{4}{n-3a+1}\right) \le \alpha(n-3a+2) + \frac{(1-\alpha)^2}{\alpha},$$ which is equivalent to prove the inequality $$\frac{4\alpha(1-\alpha)}{ 3\alpha^2 -3\alpha+1}  \le n-3a+1.$$ Observe that $g(\alpha) = \frac{4\alpha(1-\alpha)}{ 3\alpha^2 -3\alpha+1} $ is a decreasing function of $\alpha$ for $\frac{1}{2} \le \alpha < 1$ and hence $g(\alpha)\leq g(\frac{1}{2})=4$. Since $n \ge 3a+3$, $4 \le n-3a+1$ and hence our claim is true. Now, for $\frac{1}{2}\leq \alpha <1$, by Lemma~\ref{lem:spec-Delta}, it follows that 
$$\rho_\alpha(G^*) < \alpha(n-3a+2) + \frac{(1-\alpha)^2}{\alpha}  = \alpha \Delta \big(C_{a,a+1,a+1}^{n-3a-1}\big) + \frac{(1-\alpha)^2}{\alpha} \le \rho_\alpha \big(C_{a,a+1,a+1}^{n-3a-1}\big),$$
which is a contradiction. This completes the proof.
\end{proof}

\begin{theorem}\label{thm:bicyclic-max}
	Let $n,g$ be integers such that $\lceil \frac{3g}{2} \rceil - 1 \le n$ and $G^*$ be the graph with maximal $A_\alpha$-spectral radius in $\mathcal{B}_n^2(g)$ for $\frac{1}{2}\le \alpha <1$. Then  $G^* \cong C_{\lfloor \frac{g}{2} \rfloor,\lceil \frac{g}{2} \rceil,\lceil \frac{g}{2} \rceil}^{n - \lceil \frac{3g}{2} \rceil+1}$.
\end{theorem}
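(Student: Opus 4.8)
The plan is to derive Theorem~\ref{thm:bicyclic-max} from Theorem~\ref{thm:bi-2-unique} by pinning down to which vertex the pendant edges of the extremal graph are attached. By Theorem~\ref{thm:bi-2-unique}, $G^*$ is obtained from $\Theta := C(\lfloor g/2 \rfloor, \lceil g/2 \rceil, \lceil g/2 \rceil)$ by attaching $k := n - \lceil 3g/2 \rceil + 1$ pendant edges to a single vertex $v$ of $\Theta$, and $\Theta$ has exactly two vertices $u_1, u_2$ of degree $3$. If $k = 0$ then $G^* = \Theta = C_{\lfloor g/2 \rfloor, \lceil g/2 \rceil, \lceil g/2 \rceil}^{0}$ and we are done, so assume $k \ge 1$. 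By the definition of $C_{p,q,r}^{t}$ it remains exactly to show that $v \in \{u_1, u_2\}$; uniqueness is then automatic, since the two choices are interchanged by an automorphism of $\Theta$.

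First I would record that, in the Perron vector $\mathbf{x}$ of $A_\alpha(G^*)$, the vertex $v$ carries the strictly largest entry among all vertices of $\Theta$. Indeed, the $k$ pendant neighbours of $v$ form a set $N$ with $N \subseteq N_{G^*}(v) \setminus (N_{G^*}(w) \cup \{w\})$ for every $w \in V(\Theta) \setminus \{v\}$; hence if $x_w \ge x_v$ for some such $w$, then by Lemma~\ref{lem:edge} the graph $G^* - \{vp : p \in N\} + \{wp : p \in N\}$ has strictly larger $A_\alpha$-spectral radius, and since its minimal bicyclic subgraph is still $\Theta$ it lies in $\mathcal{B}_n^2(g)$, contradicting the maximality of $G^*$. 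Thus $x_v > x_w$ for all $w \in V(\Theta) \setminus \{v\}$; in particular $x_v > x_{u_1}$ and $x_v > x_{u_2}$.

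Now suppose, for contradiction, that $v$ is an interior vertex of one of the three paths of $\Theta$, written as $u_1 = w_0, w_1, \dots, w_\ell = u_2$ with $v = w_j$. Since $G^*$ is bicyclic it properly contains $C_g$, so $\rho_\alpha(G^*) > \rho_\alpha(C_g) = 2$ by Lemma~\ref{lem:subgraph}; hence every degree-$2$ vertex $w_i$ (with $1 \le i \le \ell-1$, $i \ne j$) satisfies $x_{w_{i-1}} + x_{w_{i+1}} = \tfrac{\rho_\alpha(G^*) - 2\alpha}{1-\alpha}\, x_{w_i} > 2x_{w_i}$, so the finite sequences $(x_{w_0}, \dots, x_{w_j})$ and $(x_{w_j}, \dots, x_{w_\ell})$ are strictly convex with unique maximum at $w_j = v$. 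I would split into two cases. If $v$ is not adjacent to $u_1$ or $u_2$, then $\Delta(G^*) = k+2$ while $\Delta\big(C_{\lfloor g/2 \rfloor, \lceil g/2 \rceil, \lceil g/2 \rceil}^{k}\big) = k+3$; evaluating $\max_{u}\{\alpha d(u) + (1-\alpha)m(u)\}$ over the few types of vertices of $G^*$ — using Lemma~\ref{lem:d(u)-m(u)} when $k \ge 2$ and the sharper Lemma~\ref{lem:d(u)-d(v)} (which gives $\rho_\alpha(G^*) < 2+\alpha$, there being no two adjacent vertices of degree $\ge 3$) when $k = 1$ — yields
$$\rho_\alpha(G^*) \;<\; \alpha(k+3) + \frac{(1-\alpha)^2}{\alpha} \;\le\; \rho_\alpha\Big(C_{\lfloor g/2 \rfloor, \lceil g/2 \rceil, \lceil g/2 \rceil}^{k}\Big),$$
the last step being Lemma~\ref{lem:spec-Delta} together with $\alpha \ge \tfrac12$, which contradicts maximality. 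If instead $v$ is adjacent to $u_1$ (say $v = w_1$), and $k = 1$, then subtracting the eigen-equations of $A_\alpha(G^*)$ at $v$ and at $u_1$, using the strict convexity above to compare $u_1$'s neighbours on the other two paths with $w_2$ and with the (small) pendant entry, forces $x_{u_1} \ge x_v$, contradicting the second paragraph; while for $k \ge 2$ the vertex $v$ has degree $k+2 \ge 4 = \Delta(G^*)-$nothing, so $\Delta(G^*) = k+2 < k+3$ and the degree-sum bound of Lemma~\ref{lem:d(u)-m(u)} versus Lemma~\ref{lem:spec-Delta} again gives a contradiction. Hence $v \in \{u_1, u_2\}$ and $G^* \cong C_{\lfloor g/2 \rfloor, \lceil g/2 \rceil, \lceil g/2 \rceil}^{\,n - \lceil 3g/2 \rceil + 1}$.

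I expect the main obstacle to be the subcase where $v$ sits at distance $1$ from a degree-$3$ vertex: there the crude degree-based upper bound on $\rho_\alpha(G^*)$ becomes tight at $\alpha = \tfrac12$ for small $k$, so the inequality $x_{u_1} \ge x_v$ must be extracted by hand from the eigen-equations and the convexity of $\mathbf{x}$ along the paths. There remain finitely many genuinely tight configurations — one of the three paths of length $2$ (so $g \le 5$) together with small $k$ — which, as with the treatment of $C(1,3,3)$ versus $C_{2,2,2}^{1}$ inside the proof of Theorem~\ref{thm:bi-2-unique}, should be dispatched by explicit comparison of the relevant characteristic polynomials. Everything else is routine bookkeeping about which vertices of $\Theta$ have degree $2$ and which have degree $3$.
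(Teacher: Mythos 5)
Your overall architecture mirrors the paper's: reduce to showing that the attachment vertex $v$ has degree $3$ in $C(\lfloor g/2\rfloor,\lceil g/2\rceil,\lceil g/2\rceil)$, observe via Lemma~\ref{lem:edge} that $x_v$ is maximal on $V(\Theta)$, and then compare the bound of Lemma~\ref{lem:d(u)-m(u)} against the lower bound of Lemma~\ref{lem:spec-Delta} for $C^k$. Your treatment of $k\ge 2$ and of the case $k=1$ with $v$ non-adjacent to both branch vertices is sound and is essentially the paper's Case~2 (the paper in fact handles all positions of $v$ at once for $k \ge 2$ by taking $m(v)=(k+6)/(k+2)$ as the worst case). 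The problem is the case you yourself flag as the obstacle: $k=1$ with $v$ adjacent to a branch vertex $u_1$. There your argument has a genuine gap. Subtracting the eigen-equations at $v$ and $u_1$ gives $(\rho-3\alpha+1-\alpha)(x_v-x_{u_1})=(1-\alpha)\big((x_{w_2}-x_a)+(x_p-x_{a'})\big)$, where $a,a'$ are $u_1$'s neighbours on the other two paths; to force $x_{u_1}\ge x_v$ you would need $x_{w_2}+x_p\le x_a+x_{a'}$, and neither the convexity of $\mathbf{x}$ along the paths (which only locates maxima at path endpoints) nor any other stated fact controls the signs of $x_{w_2}-x_a$ and $x_p-x_{a'}$. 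Indeed, for $C(2,2,2)$ with one pendant at a degree-$2$ vertex and $\alpha=\tfrac12$ the entries $x_v$ and $x_{u_1}$ come out essentially equal (numerically $x_v/x_{u_1}\approx 1.00$), so no soft inequality of this type can decide the comparison; this is precisely why the crude bound fails here ($\max_u\{\alpha d(u)+(1-\alpha)m(u)\}=\tfrac{2\alpha+7}{3}$, which exceeds $4\alpha+\tfrac{(1-\alpha)^2}{\alpha}$ at $\alpha=\tfrac12$).

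Your fallback, that ``finitely many genuinely tight configurations'' remain to be checked by characteristic polynomials, is also not accurate: for $k=1$ the configuration ``pendant at a degree-$2$ vertex adjacent to a branch vertex'' occurs for \emph{every} girth $g$, so there are infinitely many graphs to rule out. The paper closes this case with two separate devices you do not supply: explicit characteristic-polynomial comparisons for $g\in\{3,4,5\}$ (e.g.\ showing the root $\tfrac{2\alpha+7}{3}$-bounded quantity lies below $\rho_\alpha(C_{2,3,3}^1)$ via the sextic $g(x)$, and invoking Lemma~\ref{lem:coal} for $g=3$), together with a uniform sandwich for all $g\ge 6$ of the form $\rho_\alpha(G^*)<\rho_\alpha(G_3)<\rho_\alpha(G_5)<\rho_\alpha\big(C_{\lfloor g/2\rfloor,\lceil g/2\rceil,\lceil g/2\rceil}^1\big)$ using Lemmas~\ref{lem:subdiv} and~\ref{lem:subgraph} plus one more polynomial comparison. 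Without an argument of this kind (or a correct replacement for your eigen-equation step), the $k=1$ adjacent subcase --- which is the actual content of the theorem beyond Theorem~\ref{thm:bi-2-unique} --- remains unproved.
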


\begin{proof}
By Theorem~\ref{thm:bi-2-unique}, it follows that the maximal graph $G^*$ is obtained by attaching $n - \lceil \frac{3g}{2} \rceil+1$ pendent edges to a unique vertex, say $u$ of $C(\lfloor \frac{g}{2} \rfloor,\lceil \frac{g}{2} \rceil,\lceil \frac{g}{2} \rceil)$. Now, we have to show that $u$ is a vertex of degree $3$ in $C(\lfloor \frac{g}{2} \rfloor,\lceil \frac{g}{2} \rceil,\lceil \frac{g}{2} \rceil)$. Suppose on the contrary, we assume that $u$ is a vertex of degree $2$ in $C(\lfloor \frac{g}{2} \rfloor,\lceil \frac{g}{2} \rceil,\lceil \frac{g}{2} \rceil)$. For convenience, let $k = n - \lceil \frac{3g}{2} \rceil+1$. If $k = 0$, clearly $G^* \cong C_{\lfloor \frac{g}{2} \rfloor,\lceil \frac{g}{2} \rceil,\lceil \frac{g}{2} \rceil}^0$. Now, consider the following two cases:\\

\begin{figure}[ht]
	\centering
	\begin{tikzpicture}[scale=0.5]
		\draw[fill=black] (1,-2) circle (2pt);
		\draw[fill=black] (-1,0) circle (2pt);
		\draw[fill=black] (1,2) circle (2pt);
		\draw[fill=black] (3,0) circle (2pt);
		\draw[fill=black] (1,4) circle (2pt);

		\draw[thick] (3,0) -- (1,2) -- (-1,0);
		\draw[thick] (3,0) -- (1,-2) -- (-1,0);
		\draw[thick] (-1,0) -- (1,0) -- (3,0);
		\draw[thick] (1,2) -- (1,4);
		
		\node at (1,-3) {$G_1$};
	\end{tikzpicture}\hspace{0.5cm}
	\begin{tikzpicture}[scale=0.5]
	\draw[fill=black] (1,-2) circle (2pt);
	\draw[fill=black] (-1,0) circle (2pt);
	\draw[fill=black] (1,2) circle (2pt);
	\draw[fill=black] (3,0) circle (2pt);
	\draw[fill=black] (1,4) circle (2pt);
	\draw[fill=black] (1,0) circle (2pt);
	
	\draw[thick] (3,0) -- (1,2) -- (-1,0);
	\draw[thick] (3,0) -- (1,-2) -- (-1,0);
	\draw[thick] (-1,0) -- (1,0) -- (3,0);
	\draw[thick] (1,2) -- (1,4);
	
	\node at (1,-3) {$G_2$};
	\end{tikzpicture}\hspace{0.5cm}
	\begin{tikzpicture}[scale=0.5]
		\draw[fill=black] (-1,-2) circle (2pt);
		\draw[fill=black] (1,-2) circle (2pt);
		\draw[fill=black] (-3,0) circle (2pt);
		\draw[fill=black] (-1,0) circle (2pt);
		\draw[fill=black] (0,2) circle (2pt);
		\draw[fill=black] (3,0) circle (2pt);
		\draw[fill=black] (0,4) circle (2pt);
		\draw[fill=black] (1,0) circle (2pt);
		
		\draw[thick] (-3,0) -- (0,2) -- (3,0) -- (1,-2) -- (-1,-2) -- (-3,0);
		\draw[thick] (-3,0) -- (-1,0) -- (1,0) -- (3,0);
		\draw[thick] (0,2) -- (0,4);
		
		\node at (0,-3) {$G_3$};
	\end{tikzpicture}\hspace{0.5cm}
	\begin{tikzpicture}[scale=0.5]
		\draw[fill=black] (-1,-2) circle (2pt);
		\draw[fill=black] (1,-2) circle (2pt);
		\draw[fill=black] (-3,0) circle (2pt);
		\draw[fill=black] (-1,0) circle (2pt);
		\draw[fill=black] (0,2) circle (2pt);
		\draw[fill=black] (3,0) circle (2pt);
		\draw[fill=black] (-1,2) circle (2pt);
		\draw[fill=black] (1,0) circle (2pt);
		
		\draw[thick] (-3,0) -- (0,2) -- (3,0) -- (1,-2) -- (-1,-2) -- (-3,0);
		\draw[thick] (-3,0) -- (-1,0) -- (1,0) -- (3,0);
		\draw[thick] (-1,0) -- (-1,2);
		
		\node at (0,-3) {$G_4$};
	\end{tikzpicture}\hspace{0.5cm}
	\begin{tikzpicture}[scale=0.5]
		\draw[fill=black] (-1,-2) circle (2pt);
		\draw[fill=black] (1,-2) circle (2pt);
		\draw[fill=black] (-1,2) circle (2pt);
		\draw[fill=black] (-1,0) circle (2pt);
		\draw[fill=black] (1,2) circle (2pt);
		\draw[fill=black] (3,0) circle (2pt);
		\draw[fill=black] (5,0) circle (2pt);
		\draw[fill=black] (-1,2) circle (2pt);
		\draw[fill=black] (1,0) circle (2pt);
		
		\draw[thick] (-1,2) -- (1,2) -- (3,0) -- (5,0);
		\draw[thick] (-1,0) -- (1,0) -- (3,0) -- (5,0);
		\draw[thick] (-1,-2) -- (1,-2) -- (3,0) -- (5,0);
		
		\node at (1,-3) {$G_5$};
	\end{tikzpicture}	
	\caption{Some special bicyclic graphs or subgraphs.} \label{fig:bicyclic0}
\end{figure}
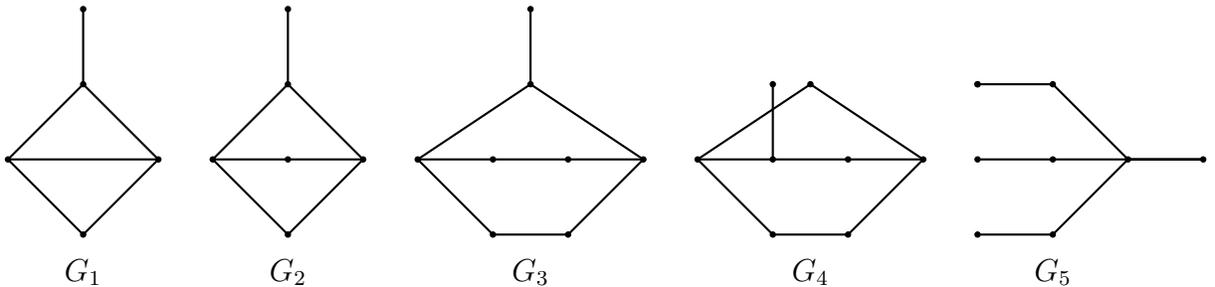

	\textbf{Case 1.} Let $k = 1$. If $g \in \{3,4,5\}$, then $G^*\cong G_1,G_2,G_3$ or $G_4$. By Lemma \ref{lem:coal}, it  follows that $\rho_\alpha(G_1)<\rho_\alpha(C_{1,2,2}^1)$ and $\rho_\alpha(G_2)<\rho_\alpha(C_{1,2,2}^1)$, a contradiction. Again, by Lemma \ref{lem:d(u)-m(u)}, we have  $\rho_\alpha(G_3),\rho_\alpha(G_4)=\max \big \{3-\alpha,2+\alpha,\frac{2\alpha+7}{3}\big \}=\frac{2\alpha+7}{3}$. Again, $\rho_\alpha(C_{2,3,3}^1)$ is the largest root of $g(x)=0$, where $g(x)$ is given in equation (\ref{eqn1}). Now, one can verify  that $g\big(\frac{2\alpha+7}{3}\big)=\frac{1}{729}\big(-992\alpha^6+22230\alpha^5-100473\alpha^4+183286\alpha^3-152901\alpha^2+56040\alpha-7190\big)<0$. Hence, $\rho_\alpha(G_3)<\rho_\alpha(C_{2,3,3}^1)$ and $\rho_\alpha(G_4)<\rho_\alpha(C_{2,3,3}^1)$, a contradiction.
   	
    Let $g\geq 6$. By Lemma \ref{lem:subdiv}, it follows that	any maximal graph $G^*$ on girth $g \ge 6$ has a smaller $A_\alpha$-spectral radius than that of the maximal graph with girth $g =5$, that is, $\rho_\alpha(G^*)<\rho_\alpha(G_3)$. On the other hand, for $g \ge 6$, $G_5$ is a subgraph of $C_{\lfloor \frac{g}{2} \rfloor,\lceil \frac{g}{2} \rceil,\lceil \frac{g}{2} \rceil}^1$ and hence by Lemma \ref{lem:subgraph}, we have $\rho_\alpha(G_5) < \rho_\alpha \big(C_{\lfloor \frac{g}{2} \rfloor,\lceil \frac{g}{2} \rceil,\lceil \frac{g}{2} \rceil}^1\big)$. By direct computations, one can find that $\rho_\alpha(G_5)$ is the largest root of $h_1(x)$ and $\rho_\alpha(G_3)$ is the largest root of $h_2(x)$, where 
    \begin{align*}
     h_1(x)&=x^4-8\alpha x^3+(16\alpha^2+10\alpha-5)x^2-(8\alpha^3+28\alpha^2-14\alpha)x+(14\alpha^3-3\alpha^2-4\alpha+1),\\
     h_2(x)&=x^4-(8\alpha+1)x^3+(17\alpha^2+17\alpha-5)x^2-(8\alpha^3+44\alpha^2-10\alpha-3)x+18\alpha^3+11\alpha^2\\
     &~~~-13\alpha+2.
    \end{align*}
    If $x_1=\rho_\alpha(G_3)$, then one can verify that $h_1(x_1)=h_1(x_1)-h_2(x_1)=x_1^3-(\alpha^2+7\alpha)x_1^2+(16\alpha^2+4\alpha-3)x_1-(4\alpha^3+14\alpha^2-9\alpha+1)<0$. Hence,  $\rho_\alpha(G_3)=x_1<\rho_\alpha(G_5)$. Thus, $\rho_\alpha(G^*)<\rho_\alpha(G_3)<\rho_\alpha(G_5) < \rho_\alpha \big(C_{\lfloor \frac{g}{2} \rfloor,\lceil \frac{g}{2} \rceil,\lceil \frac{g}{2} \rceil}^1\big)$, which is a contradiction. \\
	
	\textbf{Case 2.} Let $k \ge 2$. The maximum value of $\{\alpha d(u) + (1-\alpha)m(u)\}$ over all vertices $u \in V(G^*)$ is attained if $u$ is adjacent to the two vertices of degree $3$ in $C(\lfloor \frac{g}{2} \rfloor,\lceil \frac{g}{2} \rceil,\lceil \frac{g}{2} \rceil)$. Since $G^*$ is not regular, by  Lemma~\ref{lem:d(u)-m(u)}, we have 
    \begin{align*}
     \rho_\alpha(G^*) < \max_{u \in V(G^*)} \{\alpha d(u) + (1-\alpha)m(u)\} &= \alpha (k+2) + (1-\alpha) \frac{k+6}{k+2}\\ &= 1 + \alpha (k+1) + (1-\alpha)\frac{4}{k+2}.  
    \end{align*}
Now, we claim that $$1 + \alpha (k+1) + (1-\alpha)\frac{4}{k+2} \le \alpha (k+3) + \frac{(1-\alpha)^2}{\alpha},$$
which is equivalent to prove the inequality 
$$\frac{4\alpha(1-\alpha)}{3\alpha^2-3\alpha+1} \le k+2.$$
Observe that $g(\alpha) = \frac{4\alpha(1-\alpha)}{3\alpha^2-3\alpha+1}$ is a decreasing function of $\alpha$ for $\frac{1}{2} \le \alpha < 1$ and hence $g(\alpha)\leq g(\frac{1}{2})=4$. Since $k \ge 2$, the above inequality is true and hence the claim is also true. Note that $C_{\lfloor \frac{g}{2} \rfloor,\lceil \frac{g}{2} \rceil,\lceil \frac{g}{2} \rceil}^k$ has maximum degree $\Delta = (k+3)$ and hence by Lemma~\ref{lem:spec-Delta}, we have $$ \alpha (k+3) + \frac{(1-\alpha)^2}{\alpha} = \alpha \Delta + \frac{(1-\alpha)^2}{\alpha} \le \rho_\alpha(C_{\lfloor \frac{g}{2} \rfloor,\lceil \frac{g}{2} \rceil,\lceil \frac{g}{2} \rceil}^k).$$
Thus, we have $\rho_\alpha(G^*) < \rho_\alpha(C_{\lfloor \frac{g}{2} \rfloor,\lceil \frac{g}{2} \rceil,\lceil \frac{g}{2} \rceil}^k)$, which is a contradiction. 

Hence, $u$ is a vertex of degree $3$ in $C(\lfloor \frac{g}{2} \rfloor,\lceil \frac{g}{2} \rceil,\lceil \frac{g}{2} \rceil)$. This completes the proof.
\end{proof}

Now, we determine the unique graphs with maximum $A_\alpha$-spectral radius in $\mathcal{B}_n(g)$.

\begin{theorem}
Let $n,g$ be integers such that $\lceil \frac{3g}{2} \rceil - 1 \le n$. If $G\in \mathcal{B}_n(g)$ and $\alpha \in [\frac{1}{2},1)$, then 
$$\rho_\alpha(G) \leq \rho_\alpha \Big(C_{\lfloor \frac{g}{2} \rfloor,\lceil \frac{g}{2} \rceil,\lceil \frac{g}{2} \rceil}^{n - \lceil \frac{3g}{2} \rceil+1}\Big).$$
The equality holds if and only if $G\cong C_{\lfloor \frac{g}{2} \rfloor,\lceil \frac{g}{2} \rceil,\lceil \frac{g}{2} \rceil}^{n - \lceil \frac{3g}{2} \rceil+1}$.
\end{theorem}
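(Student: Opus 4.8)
The plan is to combine the two theorems of this subsection. Theorem~\ref{thm:bicyclic-girth} says that $B_n^1(g)$ is the unique maximizer of $\rho_\alpha$ over $\mathcal{B}_n^1(g)$, and Theorem~\ref{thm:bicyclic-max} says that $C_{\lfloor g/2\rfloor,\lceil g/2\rceil,\lceil g/2\rceil}^{n-\lceil 3g/2\rceil+1}$ is the unique maximizer of $\rho_\alpha$ over $\mathcal{B}_n^2(g)$ when $\alpha\in[\frac{1}{2},1)$. Since $\mathcal{B}_n(g)=\mathcal{B}_n^1(g)\cup\mathcal{B}_n^2(g)$, the whole statement will follow once I establish
\[
\rho_\alpha\big(B_n^1(g)\big)<\rho_\alpha\Big(C_{\lfloor g/2\rfloor,\lceil g/2\rceil,\lceil g/2\rceil}^{n-\lceil 3g/2\rceil+1}\Big)\qquad\text{whenever }\mathcal{B}_n^1(g)\neq\emptyset .
\]
Indeed, every $G\in\mathcal{B}_n^1(g)$ then satisfies $\rho_\alpha(G)\le\rho_\alpha(B_n^1(g))<\rho_\alpha(C_{\lfloor g/2\rfloor,\lceil g/2\rceil,\lceil g/2\rceil}^{n-\lceil 3g/2\rceil+1})$ by Theorem~\ref{thm:bicyclic-girth}, while every $G\in\mathcal{B}_n^2(g)$ satisfies $\rho_\alpha(G)\le\rho_\alpha(C_{\lfloor g/2\rfloor,\lceil g/2\rceil,\lceil g/2\rceil}^{n-\lceil 3g/2\rceil+1})$ with equality only for $G\cong C_{\lfloor g/2\rfloor,\lceil g/2\rceil,\lceil g/2\rceil}^{n-\lceil 3g/2\rceil+1}$ by Theorem~\ref{thm:bicyclic-max}; together these give both the bound and the characterization of the extremal graph.

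I would first dispatch the easy case $\mathcal{B}_n^1(g)=\emptyset$, which occurs exactly when $n<2g-1$ (two $g$-cycles sharing at most one vertex already need at least $2g-1$ vertices); then $\mathcal{B}_n(g)=\mathcal{B}_n^2(g)$ and the conclusion is precisely Theorem~\ref{thm:bicyclic-max}. So assume $n\ge 2g-1$ and describe $B_n^1(g)$ concretely: two $g$-cycles $v_1a_1a_2\cdots a_{g-1}v_1$ and $v_1b_1b_2\cdots b_{g-1}v_1$ meeting at the single vertex $v_1$, to which $n-2g+1$ pendant vertices are attached. Let $\mathbf{x}$ be the Perron vector of $A_\alpha(B_n^1(g))$; since interchanging $a_i\leftrightarrow b_i$ while fixing $v_1$ and all the pendants is an automorphism, uniqueness of $\mathbf{x}$ gives $x_{a_{g-1}}=x_{b_{g-1}}$.

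The key step is a single edge shift. I would invoke Lemma~\ref{lem:edge} with $v=a_{g-1}$, $u=b_{g-1}$, and $N=\{a_{g-2}\}$; the hypotheses $N\subseteq N(a_{g-1})\setminus\big(N(b_{g-1})\cup\{b_{g-1}\}\big)$ and $x_u\ge x_v$ are immediate from the previous paragraph, so the graph $G'=B_n^1(g)-a_{g-2}a_{g-1}+a_{g-2}b_{g-1}$ satisfies $\rho_\alpha(B_n^1(g))<\rho_\alpha(G')$. In $G'$ the vertex $a_{g-1}$ has become a pendant at $v_1$, while $v_1$ and $b_{g-1}$ both have degree at least $3$ and are joined by three internally disjoint paths, namely $v_1b_1\cdots b_{g-1}$ of length $g-1$, $v_1a_1\cdots a_{g-2}b_{g-1}$ of length $g-1$, and the edge $v_1b_{g-1}$ of length $1$. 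Hence $B(G')=C(1,g-1,g-1)$, the three cycles of $G'$ have lengths $g$, $g$, $2g-2$, and since $2g-2\ge g$ the girth of $G'$ equals $g$; thus $G'\in\mathcal{B}_n^2(g)$. Applying Theorem~\ref{thm:bicyclic-max} to $G'$ gives $\rho_\alpha(G')\le\rho_\alpha(C_{\lfloor g/2\rfloor,\lceil g/2\rceil,\lceil g/2\rceil}^{n-\lceil 3g/2\rceil+1})$, and chaining this with $\rho_\alpha(B_n^1(g))<\rho_\alpha(G')$ yields the displayed strict inequality, completing the proof.

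The step I expect to need the most care is checking that the shifted graph $G'$ really lies in $\mathcal{B}_n^2(g)$ with unchanged girth: one must track all three cycles of the theta-subgraph $C(1,g-1,g-1)$, confirm that the common edge $v_1b_{g-1}$ is untouched by the shift, and verify the hypotheses of Lemma~\ref{lem:edge} uniformly over all $g\ge3$ — in particular in the tight cases $g=3$ and $g=4$, where the vertices $v_1$, $a_{g-2}$, $a_{g-1}$, $b_{g-1}$ are close together, and one should make sure $N$ is nonempty and disjoint from $N(b_{g-1})\cup\{b_{g-1}\}$.
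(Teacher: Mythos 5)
Your proposal is correct, and its overall skeleton (reduce to comparing $\rho_\alpha(B_n^1(g))$ with $\rho_\alpha\big(C_{\lfloor g/2\rfloor,\lceil g/2\rceil,\lceil g/2\rceil}^{n-\lceil 3g/2\rceil+1}\big)$ via Theorems~\ref{thm:bicyclic-girth} and~\ref{thm:bicyclic-max}) is exactly the paper's. Where you genuinely diverge is in how that comparison is established. The paper splits into $g=3$ (an application of Lemma~\ref{lem:edge}) and $g\ge4$, and for $g\ge4$ it sandwiches the two quantities between explicit numerical bounds: $\rho_\alpha(B_n^1(g))\le \alpha(n-2g+5)+(1-\alpha)\frac{n-2g+9}{n-2g+5}\le\alpha(n-2g+3)+2$ by Lemma~\ref{lem:d(u)-m(u)}, versus $\rho_\alpha\ge\alpha\Delta+\frac{(1-\alpha)^2}{\alpha}$ for the candidate extremal graph by Lemma~\ref{lem:spec-Delta}, and then checks the resulting inequality using $\alpha\ge\frac12$. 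Your argument instead performs a single symmetry-justified edge shift ($x_{a_{g-1}}=x_{b_{g-1}}$ from the automorphism swapping the two cycles, then Lemma~\ref{lem:edge} with $N=\{a_{g-2}\}$) that lands in a graph with minimal bicyclic subgraph $C(1,g-1,g-1)$, hence in $\mathcal{B}_n^2(g)$, and then quotes Theorem~\ref{thm:bicyclic-max}. This is uniform in $g$ (for $g=3$ it in fact produces $C_{1,2,2}^{n-4}$ directly, essentially the paper's Case~1), avoids all the $\alpha$-dependent estimates at this step (the restriction $\alpha\in[\frac12,1)$ enters only through Theorem~\ref{thm:bicyclic-max}), and your bookkeeping — the girth of $G'$ is $\min\{g,g,2g-2\}=g$, the vertex count is unchanged, $N$ is nonempty and disjoint from $N(b_{g-1})\cup\{b_{g-1}\}$ for all $g\ge3$, and $\mathcal{B}_n^1(g)=\emptyset$ precisely when $n<2g-1$ — all checks out. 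The one thing I would make explicit is that the strictness of the first inequality in the chain $\rho_\alpha(B_n^1(g))<\rho_\alpha(G')\le\rho_\alpha\big(C_{\lfloor g/2\rfloor,\lceil g/2\rceil,\lceil g/2\rceil}^{n-\lceil 3g/2\rceil+1}\big)$ is what delivers the uniqueness clause of the theorem; you use this implicitly and it is fine.
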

\begin{proof}
For $\frac{1}{2}\le \alpha <1$, by Theorem \ref{thm:bicyclic-girth}, it follows that $\rho_\alpha(G) \leq \rho_\alpha(B_n^1(g))$ for $G\in \mathcal{B}_n^1(g)$ and by Theorem \ref{thm:bicyclic-max}, we have $\rho_\alpha(G) \leq \rho_\alpha \Big(C_{\lfloor \frac{g}{2} \rfloor,\lceil \frac{g}{2} \rceil,\lceil \frac{g}{2} \rceil}^{n - \lceil \frac{3g}{2} \rceil+1}\Big)$ for $G\in \mathcal{B}_n^2(g)$. Therefore, it is sufficient to show that $\rho_\alpha(B_n^1(g)) < \rho_\alpha \Big(C_{\lfloor \frac{g}{2} \rfloor,\lceil \frac{g}{2} \rceil,\lceil \frac{g}{2} \rceil}^{n - \lceil \frac{3g}{2} \rceil+1}\Big)$. Now, consider the following two cases:\\

\noindent \textbf{Case 1.} Let $g=3$. Then, by applying Lemma \ref{lem:edge}, we have $\rho_\alpha(B_n^1(3)) < \rho_\alpha (C_{1,2,2}^{n-4})$.\\

\noindent \textbf{Case 2.} Let $g \geq 4$. By Lemma \ref{lem:d(u)-m(u)}, we have
\begin{align*}
\rho_\alpha(B_n^1(g))& \leq \max \big \{\alpha d(u)+(1-\alpha)m(u):u\in V\big(B_n^1(g)\big)\big \}  \\
& \leq \alpha(n-2g+5)+(1-\alpha)\bigg(\frac{n-2g+9}{n-2g+5}\bigg)\\
& \leq \alpha(n-2g+3)+2.
\end{align*}
Again, for $\frac{1}{2}\le \alpha <1$, by Lemma \ref{lem:spec-Delta}, we have
$$\rho_\alpha \Big(C_{\lfloor \frac{g}{2} \rfloor,\lceil \frac{g}{2} \rceil,\lceil \frac{g}{2} \rceil}^{n - \lceil \frac{3g}{2} \rceil+1}\Big)\geq \alpha \Delta + \frac{(1-\alpha)^2}{\alpha}=\alpha \Big(n -\Big \lceil \frac{3g}{2}\Big \rceil+4\Big) + \frac{(1-\alpha)^2}{\alpha}.$$
Since $g\geq 4$, $\alpha \big(n -\big \lceil \frac{3g}{2}\big \rceil+4\big) + \frac{(1-\alpha)^2}{\alpha}-\alpha(n-2g+3)-2=\alpha \big(\big \lfloor \frac{g}{2}\big \rfloor+1\big) + \frac{(1-\alpha)^2}{\alpha}-2\geq 0$ for  $\frac{1}{2}\le \alpha <1$. Hence,    
$$\rho_\alpha(B_n^1(g)) \leq \alpha(n-2g+3)+2 \leq \alpha \Big(n -\Big \lceil \frac{3g}{2}\Big \rceil+4\Big) + \frac{(1-\alpha)^2}{\alpha} \leq \rho_\alpha \Big(C_{\lfloor \frac{g}{2} \rfloor,\lceil \frac{g}{2} \rceil,\lceil \frac{g}{2} \rceil}^{n - \lceil \frac{3g}{2} \rceil+1}\Big).$$
This completes the proof.
\end{proof}

\subsection{Extremal bicyclic graphs in $\mathscr{B}_n(k)$}

In this subsection, we identify the bicyclic graphs with the maximum $A_\alpha$-spectral radius among $\mathscr{B}_n(k)$.
Let $\mathscr{B}_n^{1}(k)\in \mathscr{B}_n(k)$ such that the intersection of the two cycles is at most one vertex, and let $\mathscr{B}_n^{2}(k)\in \mathscr{B}_n(k)$ such that the intersection of the two cycles is at least one edge. Note that $\mathscr{B}_n(k) = \mathscr{B}_n^{1}(k) \cup \mathscr{B}_n^{2}(k)$. 

Let $B_n^1(p,t,q,k)$ be the set of all bicyclic graphs with $k$ pendant vertices and there is a path of length $t-1$  $(t \ge 1)$ between the two cycles of length $p,q$. Let $B_n^2(p,t,q,k)$ be the set of all bicyclic graphs with $k$ pendant vertices containing $C(p,t,q)$. Note that $B_n^1(p,t,q,k) \subset \mathscr{B}_n^{1}(k)$ and $B_n^2(p,t,q,k) \subset \mathscr{B}_n^{2}(k)$.

 Let $\mathfrak{B}_n^1(k)$ be the set of all bicyclic graphs containing two triangles with exactly one common vertex and $k$ paths of almost equal lengths are incident to that common vertex. In the following theorem, we prove that the graph $\mathfrak{B}_n^1(k)$ has maximum $A_\alpha$-spectral radius in $\mathscr{B}_n^{1}(k)$.

\begin{theorem}\label{thm:bicyclic1}
Let $n,k$ be integers with $1 \le k \le n-3$. If $G \in  \mathscr{B}_n^{1}(k)$, then
 $$\rho_\alpha(G) \leq \rho_\alpha(\mathfrak{B}_n^1(k))$$ 
 with equality if and only if $G \cong \mathfrak{B}_n^1(k)$.
\end{theorem}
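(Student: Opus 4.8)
The plan is to mimic the structure of the proof of Theorem~\ref{thm:bicyclic-girth}, reducing an arbitrary maximal graph in $\mathscr{B}_n^{1}(k)$ step by step to $\mathfrak{B}_n^1(k)$ by repeated applications of the edge-shifting and subdivision lemmas. Let $G$ be a graph with maximal $A_\alpha$-spectral radius in $\mathscr{B}_n^{1}(k)$, so that $G \in B_n^1(p,t,q,k)$ for some $p \le q$ and some $t \ge 1$, and let $\mathbf{x}$ be its Perron vector. First I would show $t = 1$: if the two cycles are joined by a path of length $t-1 \ge 1$ with endpoints $v_1 \in C_p$ and $v_t \in C_q$, then comparing $x_{v_1}$ and $x_{v_t}$ and applying Lemma~\ref{lem:edge} (shifting the $C_q$-neighbour of $v_t$ to $v_1$, or vice versa) strictly increases $\rho_\alpha$, a contradiction; this keeps the number of pendant vertices fixed since no pendant vertex is created or destroyed. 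Hence the two cycles meet in exactly one vertex, call it $v_1$.

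Next I would argue that every tree hanging off $B(G)$ is attached at $v_1$: if a nontrivial tree is attached at some other vertex $w$ of either cycle, then whichever of $x_{v_1} \ge x_w$ or $x_w > x_{v_1}$ holds, one applies Lemma~\ref{lem:edge} with $N = N(w)\setminus V(\text{cycle})$ or $N = N(v_1)\setminus V(\text{cycle})$ respectively — exactly as in the third paragraph of the proof of Theorem~\ref{thm:bicyclic-girth} — again without changing the pendant count. Then I would show that each tree at $v_1$ is in fact a pendant path, by the same Case~1/Case~2 analysis used in the proofs of Lemma~\ref{lem:uni} and Theorem~\ref{thm:bicyclic-girth}: if some branch vertex $v$ of a tree has $d(v) \ge 3$, then either $x_{v_1} \ge x_v$ (shift a neighbour of $v$ to $v_1$ via Lemma~\ref{lem:edge}) or $x_v > x_{v_1}$ (use Lemma~\ref{lem:edge} with $N = N(v_1)$ minus the two cycle-neighbours, or the subdivision/re-attachment trick of Subcases~I and~II to move a degree-$3$ vertex toward $v_1$ and then re-attach the freed vertex as a pendant); each step preserves membership in $\mathscr{B}_n^{1}(k)$. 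After these reductions $G$ consists of two cycles $C_p, C_q$ sharing the vertex $v_1$, with $k$ pendant paths attached at $v_1$.

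It then remains to (i) shrink both cycles to triangles and (ii) make the $k$ paths of almost equal lengths. For (ii), Lemma~\ref{lem:path} (and Lemma~\ref{lem:path-pendant}) gives that, among graphs on $n$ vertices with $k$ pendant paths at a fixed vertex of a fixed skeleton, the $A_\alpha$-spectral radius is maximized when the paths have almost equal lengths — the identical argument already invoked in Lemma~\ref{lem:uni}. For (i), I would use the internal-path subdivision argument of Lemma~\ref{lem:uni2}: since $d(v_1) \ge 3$ (it lies on two cycles), each edge of a cycle of length $\ge 4$ lies on an internal path of a graph obtained by contracting one cycle edge, so Lemma~\ref{lem:subdiv} lets us replace a cycle $C_q$ with $q \ge 4$ by $C_{q-1}$ while strictly increasing $\rho_\alpha$, then re-attach the freed vertex as a pendant vertex and reabsorb it into an almost-equal-length path via Lemma~\ref{lem:subgraph} and the $U_n(t,k)$-type extremality — here applied cycle by cycle. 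Iterating drives both cycle lengths down to $3$, yielding $G \cong \mathfrak{B}_n^1(k)$.

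The main obstacle I anticipate is the bookkeeping in step (i): when one shrinks a cycle via subdivision the freed vertex must be reinserted so as to stay in $\mathscr{B}_n^{1}(k)$ \emph{with the same $k$}, and one must ensure the intermediate graphs still have their two cycles meeting in exactly one vertex and all trees still concentrated at $v_1$. This requires either proving a clean "cycle-shrinking" lemma analogous to Lemma~\ref{lem:uni2} but stated for the bicyclic skeleton with $k$ pendant paths, or carefully re-running the whole reduction pipeline after each shrink. A secondary subtlety is the edge case where some $l_i = 0$ (a pendant path degenerates) or where $k$ is small relative to $n$, so that "almost equal lengths" forces long paths; these are handled exactly as in the unicyclic case and cause no genuine difficulty, but should be mentioned for completeness. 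One also needs the standing hypothesis $\alpha \in [0,1)$ throughout so that Lemmas~\ref{lem:subgraph}, \ref{lem:edge}, \ref{lem:subdiv}, \ref{lem:path}, \ref{lem:path-pendant} all apply; unlike Theorem~\ref{thm:bicyclic-max}, no restriction to $\alpha \ge \tfrac12$ is needed here.
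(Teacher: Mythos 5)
Your proposal is correct and follows essentially the same route as the paper: reduce the maximal graph (via the edge-shifting and subdivision arguments from the proof of Theorem~\ref{thm:bicyclic-girth}) to two cycles sharing one vertex with $k$ pendant paths attached there, equalize the path lengths by Lemma~\ref{lem:path}, and shrink both cycles to triangles by the Lemma~\ref{lem:uni2}-style contract-and-reattach argument. The bookkeeping you flag (reinserting the freed vertex onto a pendant path so that the pendant count stays at $k$) is exactly how the paper handles it, so there is no gap.
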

\begin{proof}
Let $G \in \mathscr{B}_n^{1}(k)$ be a graph such that the 
$A_\alpha$-spectral radius of $G$ is as large as possible. Then $G \in B_n^1(p,t,q,k)$ for some $t \ge 1$. Let the two cycles in $G$ be $C_p$ and $C_q$ of lengths $p$ and $q$, respectively. From the proof of Theorem~\ref{thm:bicyclic-girth}, we can conclude that the maximal graph $G$ has two cycles $C_p$ and $C_q$ which share a common vertex, say $v_1$ and there are $k$ paths attached to the vertex $v_1$. Now, by Lemma~\ref{lem:path}, it follows that all $k$ paths attached to $v_1$ have almost equal lengths. Finally, by using the similar argument as in Lemma~\ref{lem:uni2}, it follows that $p = q = 3$. Thus, $G \cong \mathfrak{B}_n^1(k)$.
\end{proof}

 Let $\mathfrak{B}_n^2(k)$ be the class of bicyclic graphs containing two triangles which share an edge and $k$ paths of almost equal lengths are incident to a vertex of degree $3$. In the following theorem, we prove that the graph $\mathfrak{B}_n^2(k)$ has maximum $A_\alpha$-spectral radius in $\mathscr{B}_n^{2}(k)$.
 
\begin{theorem}\label{thm:bicyclic2}
Let $n,k$ be integers with $1 \le k \le n-3$. If $G \in  \mathscr{B}_n^{2}(k)$, then $$\rho_\alpha(G) \leq \rho_\alpha(\mathfrak{B}_n^2(k)) $$ 
and the equality holds if and only if $G \cong \mathfrak{B}_n^2(k)$.
\end{theorem}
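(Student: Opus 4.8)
The plan is to follow the same reduction strategy used in Theorem~\ref{thm:bicyclic-girth} and Theorem~\ref{thm:bicyclic1}, but now the invariant we must respect is the number $k$ of pendant vertices rather than the girth. Let $G \in \mathscr{B}_n^{2}(k)$ be a graph of maximal $A_\alpha$-spectral radius, so $G \in B_n^2(p,t,q,k)$ for some $p,t,q$; write $B(G) = C(p,t,q)$ for its minimal bicyclic subgraph (two cycles sharing an internal path), and let $\mathbf{x}$ be the Perron vector. First I would show that, apart from pendant paths, no trees are attached away from a single distinguished vertex: if two nontrivial trees hang off distinct vertices $u,v$ of $B(G)$ with $x_u \ge x_v$, then moving the neighbours of $v$ (outside $B(G)$) to $u$ via Lemma~\ref{lem:edge} strictly increases $\rho_\alpha$ while keeping the graph in $\mathscr{B}_n^{2}(k)$, a contradiction. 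The distinguished vertex should be taken to be a degree-$3$ vertex of $B(G)$: if the trees were rooted at a degree-$2$ vertex $w$ of some cycle while a degree-$3$ vertex $u$ satisfies $x_u \ge x_w$ (or by a symmetric Lemma~\ref{lem:edge} move if $x_w > x_u$, shifting the two cycle-neighbours of $w$ onto $u$), we again improve $\rho_\alpha$; so all pendant trees are attached to one degree-$3$ vertex, call it $v_1$.

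Next I would argue each attached tree is a path, exactly as in Case~1 and the subcases of Lemma~\ref{lem:uni}/Theorem~\ref{thm:bicyclic-girth}: if some tree vertex $v$ has $d(v) \ge 3$, then either $x_{v_1} \ge x_v$ (apply Lemma~\ref{lem:edge} to pull a far neighbour of $v$ onto $v_1$) or $x_v > x_{v_1}$ (use Lemma~\ref{lem:subdiv} on an internal path created by the configuration, then restore the vertex count with Lemma~\ref{lem:subgraph}), each giving a strictly larger $A_\alpha$-spectral radius inside $\mathscr{B}_n^{2}(k)$. Having reduced to $k$ pendant paths at $v_1$, Lemma~\ref{lem:path} forces these $k$ paths to have almost equal lengths (since $v_1$ has degree $\ge 3$ and any two pendant paths of lengths differing by $\ge 2$ can be balanced to strictly increase $\rho_\alpha$). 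What remains is to shrink $B(G)$: I would show $p = q = t = \ldots$ collapses to the two-triangles-sharing-an-edge configuration. The key is that any edge lying on the portion of $B(G)$ away from $v_1$ (in particular on a cycle of length $\ge 4$, or on a common path of length $\ge 2$) is an internal path edge, so Lemma~\ref{lem:subdiv} lets us contract it, decreasing $\rho_\alpha$ on the smaller graph; then Lemma~\ref{lem:subgraph} followed by Theorem~\ref{thm:bicyclic1}-style re-optimization (or directly Lemma~\ref{lem:path}) re-inflates to $n$ vertices in $\mathscr{B}_n^{2}(k)$ without loss. Iterating, $B(G)$ becomes $C(1,1,1)$ — two triangles sharing an edge — with the $k$ almost-equal pendant paths at a degree-$3$ vertex, i.e.\ $G \cong \mathfrak{B}_n^2(k)$.

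The main obstacle is the last reduction step: unlike the fixed-girth setting, here shrinking a cycle does not automatically preserve membership in the right class, because a subdivision/contraction changes both $n$ and possibly which vertex is the degree-$3$ root, and one must be careful that the two cycles genuinely \emph{share an edge} throughout (so that we stay in $\mathscr{B}_n^{2}(k)$ and never drift into $\mathscr{B}_n^{1}(k)$). In particular, when the shared internal path has length $\ge 2$, contracting one of its edges via Lemma~\ref{lem:subdiv} is legitimate only if that edge is a genuine internal path edge (both endpoints of degree $\ge 3$ need not hold — but the interior vertices have degree $2$, which is exactly the hypothesis), so I would first peel the shared path down to a single edge, then shrink each of the two cycles down to a triangle, restoring the vertex count after each move via Lemma~\ref{lem:subgraph} and re-optimizing the pendant-path placement via Lemmas~\ref{lem:edge}, \ref{lem:path}. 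Handling the small-girth base cases (two triangles, $p=q=3$, $t=1$) where there is no internal path to subdivide is then immediate, and the almost-equal-length claim plus the uniqueness of the degree-$3$ root (argued as in Theorem~\ref{thm:bicyclic-max}, Case~2, via Lemma~\ref{lem:d(u)-m(u)} when $k \ge 2$, and by the explicit small cases when $k \le 1$) pins down $\mathfrak{B}_n^2(k)$ uniquely.
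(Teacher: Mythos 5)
Your overall strategy matches the paper's proof: consolidate all attached trees at a single vertex via Lemma~\ref{lem:edge}, reduce them to pendant paths, balance the path lengths with Lemma~\ref{lem:path}, and then shrink the theta graph $C(p,t,q)$ using Lemma~\ref{lem:subdiv} together with Lemma~\ref{lem:subgraph} while preserving the pendant count by re-attaching the removed vertex at an existing pendant vertex (the paper does exactly this with $G_1 = G - vv_1 - v_1v_2 + v_1v_p$). Two points of divergence are worth flagging. First, a notational slip: the target configuration is $C(1,2,2)$ (equivalently $C(2,1,2)$), not $C(1,1,1)$ --- the latter would be a multigraph, and indeed the definition of $C(p,q,r)$ forbids more than one of $p,q,r$ equalling $1$; this is precisely how the paper rules out collapsing two of the three paths to single edges, so your worry about ``drifting out of $\mathscr{B}_n^2(k)$'' is already handled by that constraint. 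Second, and more substantively, for the final step (the $k$ paths must be rooted at a degree-$3$ vertex of $C(2,1,2)$) you propose importing the Lemma~\ref{lem:d(u)-m(u)}/Lemma~\ref{lem:spec-Delta} machinery from Theorem~\ref{thm:bicyclic-max}; that machinery is only valid for $\alpha \in [\tfrac{1}{2},1)$ (the lower bound $\alpha\Delta + \tfrac{(1-\alpha)^2}{\alpha}$ fails for small $\alpha$), whereas Theorem~\ref{thm:bicyclic2} carries no such restriction on $\alpha$. The paper instead invokes Lemma~\ref{lem:coal}: in $C(2,1,2)$ a degree-$2$ vertex $w$ and a degree-$3$ vertex $u$ satisfy $N(w)\setminus\{u\} \subset N(u)\setminus\{w\}$, so moving the whole pendant forest from $w$ to $u$ strictly increases $\rho_\alpha$ for every $\alpha \in [0,1)$. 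You should replace your proposed final step with this one-line coalescence argument; otherwise your proof only covers $\alpha \ge \tfrac{1}{2}$.
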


\begin{proof}
Let $G \in \mathscr{B}_n^{2}(k)$ be a graph such that the 
$A_\alpha$-spectral radius of $G$ is as large as possible.  Then $G \in B_n^2(p,t,q,k)$ for some $t \ge 1$. Let $V(G) = \{v_1,v_2,\hdots,v_n\}$ be the vertex set of $G$ and let $\mathbf{x} = (x_1,\hdots,x_n)^t$ be the Perron vector of $A_\alpha(G)$, where $x_i$ corresponds to the vertex $v_i$. Let the two cycles in $G$ be $C_p$ and $C_q$ of lengths $p$ and $q$, respectively.
		
		Now, by proceeding similar to the proof of Theorem \ref{thm:bicyclic-girth} and using Lemma \ref{lem:path}, it follows that the maximal graph $G$ is a graph $C(p,t,q)$ in which $k$ paths of almost equal lengths are attached to a single vertex, say $v\in C(p,t,q)$. Now, we claim that $p,t,q \le 2$. On the contrary, if $p\geq 3$, then there exists an internal path $P_{l+1}=vv_1\hdots v_l$ of length $l\geq 2$ starting from $v$. Now, consider $G_1=G-vv_1-v_1v_2+v_1v_p$, where $v_p$ is a pendant vertex in $G$. Then, by Lemma \ref{lem:subdiv} and Lemma \ref{lem:subgraph}, we have $\rho_\alpha(G) < \rho_\alpha(G_1)$, which is a contradiction. Thus, $p,t,q \le 2$. Again, by Lemma \ref{lem:subdiv}, it follows that at most one of $p,t,q$ can be equal to $1$. Without loss of generality we assume that $t = 1$. Thus, the maximal graph $G$ is the graph obtained from $C(2,1,2)$ by attaching $k$ paths of almost equal lengths to a single vertex $v$ of $C(2,1,2)$. Now, by Lemma \ref{lem:coal}, it follows that $d(v)=3$. Hence, $G \cong \mathfrak{B}_n^2(k)$. 
\end{proof}

Now, we identify the unique graphs with maximum $A_\alpha$-spectral radius in $\mathscr{B}_n(k)$.

\begin{theorem}
Let $n,k$ be integers with $1 \le k \le n-3$. If $G \in  \mathscr{B}_n(k)$, then $$\rho_\alpha(G) \leq \rho_\alpha(\mathfrak{B}_n^2(k)). $$ 
The equality holds if and only if $G \cong \mathfrak{B}_n^2(k)$.
\end{theorem}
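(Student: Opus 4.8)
The plan is to combine the two structural results just obtained with a direct comparison of the two surviving extremal candidates. Since $\mathscr{B}_n(k)=\mathscr{B}_n^{1}(k)\cup\mathscr{B}_n^{2}(k)$, every $G\in\mathscr{B}_n(k)$ lies in $\mathscr{B}_n^{1}(k)$ or in $\mathscr{B}_n^{2}(k)$, and Theorem~\ref{thm:bicyclic1}, respectively Theorem~\ref{thm:bicyclic2}, gives $\rho_\alpha(G)\le\rho_\alpha(\mathfrak{B}_n^1(k))$, respectively $\rho_\alpha(G)\le\rho_\alpha(\mathfrak{B}_n^2(k))$, with equality only for the corresponding extremal graph. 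Thus the theorem reduces to showing that, in the range $1\le k\le n-3$, the larger of $\rho_\alpha(\mathfrak{B}_n^1(k))$ and $\rho_\alpha(\mathfrak{B}_n^2(k))$ is $\rho_\alpha(\mathfrak{B}_n^2(k))$, attained only there. One should also dispose of the degenerate end of the range first: a bowtie core needs five non-pendant vertices while $K_4-e$ needs four, so $\mathscr{B}_n^1(k)=\varnothing$ (and $\mathscr{B}_n(k)=\mathscr{B}_n^{2}(k)$) once $k\ge n-4$, and $\mathscr{B}_n(k)$ is itself empty for $k=n-3$; in these cases the statement is immediate from Theorem~\ref{thm:bicyclic2}.

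So the real content is the head-to-head comparison, for $1\le k\le n-5$, of $\mathfrak{B}_n^1(k)$ (the bowtie with $k$ pendant paths of almost equal length rooted at the cut vertex $v_1$, dominating degree $k+4$, core on five vertices) with $\mathfrak{B}_n^2(k)$ ($K_4-e=C(2,1,2)$ with $k$ pendant paths of almost equal length rooted at a degree-$3$ vertex, dominating degree $k+3$, and one extra vertex spread over the pendant paths). I would attack this by the same two-sided estimate used in Theorems~\ref{thm:bi-2-unique}--\ref{thm:bicyclic-max}: bound $\rho_\alpha(\mathfrak{B}_n^1(k))$ from above by $\max_u\{\alpha d(u)+(1-\alpha)m(u)\}$ via Lemma~\ref{lem:d(u)-m(u)} (or by the edge version, Lemma~\ref{lem:d(u)-d(v)}), the maximum being attained at $v_1$ and of the shape $\alpha(k+4)+(1-\alpha)m(v_1)$ with $m(v_1)$ near $2$; bound $\rho_\alpha(\mathfrak{B}_n^2(k))$ from below by $\alpha\Delta+\tfrac{(1-\alpha)^2}{\alpha}=\alpha(k+3)+\tfrac{(1-\alpha)^2}{\alpha}$ via Lemma~\ref{lem:spec-Delta} for $\alpha\in[\tfrac12,1)$, sharpened when the pendant paths are long using Lemma~\ref{lem:subgraph} and Lemma~\ref{lem:spec}; and for the finitely many pairs $(n,k)$, and if need be values of $\alpha$, not decided by these inequalities, verify the claim directly by writing out the characteristic polynomials of the two graphs and comparing their largest roots, exactly as was done for $C_{1,3,3}$, $C_{2,2,2}^1$, $C_{2,3,3}^1$, $G_3$ and $G_5$ above. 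To carry a verification for small $n$ to all larger $n$, one would use Lemma~\ref{lem:path} (equalising the pendant paths raises $\rho_\alpha$) together with Lemma~\ref{lem:path-pendant} to track the growth of $\rho_\alpha$ in the common path length.

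The uniqueness clause should then follow automatically: for $1\le k\le n-4$ the graph $\mathfrak{B}_n^2(k)$ is not regular and, containing a triangle, is not a semiregular bipartite graph, so every appeal to Lemma~\ref{lem:subgraph}, Lemma~\ref{lem:d(u)-m(u)} or Lemma~\ref{lem:d(u)-d(v)} above is strict; hence any $G\in\mathscr{B}_n(k)$ with $G\not\cong\mathfrak{B}_n^2(k)$ incurs a strictly decreasing step somewhere in the chain (in Theorem~\ref{thm:bicyclic1}, in Theorem~\ref{thm:bicyclic2}, or in the candidate comparison), which forces $\rho_\alpha(G)<\rho_\alpha(\mathfrak{B}_n^2(k))$.

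I expect the candidate comparison to be the main obstacle, and a delicate one: because $\mathfrak{B}_n^1(k)$ carries the larger maximum degree $k+4$, the crude degree and average-degree bounds that settled the girth problem do not by themselves separate $\rho_\alpha(\mathfrak{B}_n^1(k))$ from $\rho_\alpha(\mathfrak{B}_n^2(k))$; one genuinely has to weigh the gain from the extra edge inside $K_4-e$ against the pendant vertex it costs, which appears to require the explicit-polynomial computation for the first few instances together with a monotonicity argument in the common pendant-path length to propagate the verdict to every $n$. I expect the bulk of the work, and any hidden case analysis, to sit here.
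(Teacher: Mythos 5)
Your reduction of the theorem to the single inequality $\rho_\alpha(\mathfrak{B}_n^1(k))<\rho_\alpha(\mathfrak{B}_n^2(k))$ is exactly the paper's first (and only trivial) step, and your handling of the degenerate range $k\ge n-4$ is fine. The genuine gap is in the head-to-head comparison itself, and you have in effect diagnosed it yourself: because $\mathfrak{B}_n^1(k)$ has maximum degree $k+4$ against $k+3$ for $\mathfrak{B}_n^2(k)$, the upper bound $\max_u\{\alpha d(u)+(1-\alpha)m(u)\}\le\alpha(k+4)+2(1-\alpha)$ from Lemma~\ref{lem:d(u)-m(u)} sits strictly above the lower bound $\alpha(k+3)+\tfrac{(1-\alpha)^2}{\alpha}$ from Lemma~\ref{lem:spec-Delta} for every $\alpha$ in range, so the two-sided estimate that carried Theorems~\ref{thm:bi-2-unique}--\ref{thm:bicyclic-max} can never close here. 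What you offer in its place --- characteristic polynomials for ``the first few instances'' plus an unspecified monotonicity in the common pendant-path length --- is a programme, not an argument: the polynomials depend on $n$, $k$ and $\alpha$ simultaneously, no finite list of base cases is identified, and no lemma in the paper propagates a comparison of two \emph{different} graphs along a simultaneous lengthening of their pendant paths. As written, the central inequality is not established.

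The idea you are missing is that the comparison is a one-move edge switch, not an eigenvalue estimate. In $\mathfrak{B}_n^1(k)$ the automorphism exchanging the two triangles of the bowtie forces the Perron entries at the four degree-two triangle vertices $a,b,c,d$ to coincide; taking $u=a$, $v=c$, $N=\{d\}$ in Lemma~\ref{lem:edge} (so $x_u\ge x_v$ holds with equality) and replacing the edge $cd$ by $ad$ strictly increases $\rho_\alpha$ and converts the two triangles sharing the vertex $v_1$ into two triangles sharing the edge $v_1a$. This is how the paper disposes of the comparison in one line. (Even this needs a small supplement the paper elides: the switch turns $c$ into an extra pendant vertex, so one must still absorb $c$ into a pendant path --- e.g.\ via Lemma~\ref{lem:path} and Theorem~\ref{thm:bicyclic2} --- before landing exactly on $\mathfrak{B}_n^2(k)$.) A secondary point: your appeal to the $\alpha\Delta+\tfrac{(1-\alpha)^2}{\alpha}$ branch of Lemma~\ref{lem:spec-Delta} silently restricts to $\alpha\in[\tfrac12,1)$, whereas the statement (and the switching argument) imposes no such restriction.
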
 
\begin{proof}
By Theorem \ref{thm:bicyclic1} and  Theorem \ref{thm:bicyclic2}, it is sufficient to prove that $\rho_\alpha(\mathfrak{B}_n^1(k)) < \rho_\alpha(\mathfrak{B}_n^2(k))$. Now, by using Lemma \ref{lem:edge}, it follows that $\rho_\alpha(\mathfrak{B}_n^1(k)) < \rho_\alpha(\mathfrak{B}_n^2(k))$. This completes the proof.   
\end{proof}

\section*{Acknowledgments}
Joyentanuj Das is partially supported by the National Science and Technology Council in Taiwan (Grant Id: NSTC-111-2628-M-110-002).
Iswar Mahato would like to thank Indian Institute of Technology Bombay for financial support through the Institute Post-doctoral Fellowship.

\small{

}

\end{document}